\newtheorem{theorem}{Theorem}[section]
\newtheorem{definition}[theorem]{Definition}
\newtheorem{proposition}[theorem]{Proposition}
\newtheorem{lemma}[theorem]{Lemma}
\newtheorem{example}[theorem]{Example}
\newtheorem{remark}[theorem]{Remark}
\newcommand{\conn}{\copyright}
\newcommand{\LThreeName}{\textbf{\textup{\L$_3$}}}
\newcommand{\LPName}{\textbf{\textup{LP}}}
\newcommand{\LPImpName}{\textbf{\textup{LP}$^\ararr{}$}}
\newcommand{\BKName}{\textbf{\textup{BK}}}
\newcommand{\PWKName}{\textbf{\textup{PK}}}
\newcommand{\JThreeName}{\textbf{\textup{J3}}}
\newcommand{\GThreeName}{\textbf{\textup{G3}}}
\newcommand{\SKName}{\textbf{\textup{K}}}
\newcommand{\SThreeName}{\textbf{\textup{S3}}}
\newcommand{\MSevenName}{\textbf{\textup{L7}}}
\newcommand{\MEightName}{\textbf{\textup{L8}}}
\newcommand{\SetteName}{\textbf{\textup{S}}}
\newcommand{\PostName}{\textbf{\textup{P3}}}
\newcommand{\BelnapDunnName}{\textbf{\textup{E}}}
\newcommand{\LName}{\mathbf{L}}
\newcommand{\lukConj}{\ensuremath{\land_{\mathsf{min}}}}
\newcommand{\lukDisj}{\ensuremath{\lor_{\mathsf{max}}}}
\newcommand{\lukImp}{\ensuremath{\to_{\textrm{\LThreeName}}}}
\newcommand{\lukNeg}{\ensuremath{\lnot_{\LThreeName}}}
\newcommand{\postNeg}{\ensuremath{\lnot_{\PostName}}}
\newcommand{\setteImp}{\ensuremath{\to_{\SetteName}}}
\newcommand{\setteNeg}{\ensuremath{\lnot_{\SetteName}}}
\newcommand{\KleeImp}{\ensuremath{\to_{\SKName}}}
\newcommand{\BKleeConj}{\ensuremath{\land_{\BKName}}}
\newcommand{\BKleeDisj}{\ensuremath{\lor_{\BKName}}}
\newcommand{\BKleeNeg}{\ensuremath{\lnot_{\BKName}}}
\newcommand{\BKleeImp}{\ensuremath{\to_{\BKName}}}
\newcommand{\WKleeImp}{\ensuremath{\to_{\PWKName}}}
\newcommand{\SobocNeg}{\ensuremath{\lnot_{\SThreeName}}}
\newcommand{\SobocImp}{\ensuremath{\to_{\SThreeName}}}
\newcommand{\LPImp}{\ensuremath{\to_{\LPName}}}
\newcommand{\JThreeImp}{\ensuremath{\to_{\JThreeName}}}
\newcommand{\LukPoss}{\ensuremath{\Diamond_{}}}
\newcommand{\HImp}{\ensuremath{\to_{\GThreeName}}}
\newcommand{\HNeg}{\ensuremath{\lnot_{\GThreeName}}}
\newcommand{\DSetOne}{\{\tv\}}
\newcommand{\DSetTwo}{\{\tv,\uv\}}
\newcommand{\EmptySet}{\varnothing}
\newcommand{\SetFmla}{{\upshape\textsc{Set-Fmla}}}
\newcommand{\SetSet}{{\upshape\textsc{Set-Set}}}
\newcommand{\MatA}{\mathbb{M}}
\newcommand{\SetSetCR}{\rhd}
\newcommand{\FmA}{\varphi}
\newcommand{\FmB}{\psi}
\newcommand{\FmC}{\gamma}
\newcommand{\FmD}{\delta}
\newcommand{\FmSetA}{\Phi}
\newcommand{\FmSetB}{\Psi}
\newcommand{\FmSetC}{\Pi}
\newcommand{\FmSetD}{\Gamma}
\newcommand{\FmSetE}{\Delta}
\newcommand{\LabSysNameA}{\mathsf{G}}
\newcommand{\LabSysNameTriv}{\mathsf{G}_{\mathsf{triv}}}
\newcommand{\CtxLabAOne}{\Gamma}
\newcommand{\CtxLabAHalf}{\Psi}
\newcommand{\CtxLabAZero}{\Phi}
\newcommand{\LabelValA}{\nu}
\newcommand{\LabFmSetA}{X}
\newcommand{\LabFmSetB}{Y}
\newcommand{\LabFmSetC}{Z}
\newcommand{\SetFmlaHRule}[3]{
    \AXC{\ensuremath{#1}}
    \LL{\ensuremath{#3}}
    \UIC{\ensuremath{#2}}
    \DP
}
\newcommand{\SetFmlaHName}{\mathcal{H}}
\newcommand{\ThreeLabName}[1]{\ell.{#1}}
\newcommand{\ThreeLabStreamName}[1]{s\ell.{#1}}
\newcommand{\MCRule}[2]{\frac{#1}{#2}}
\newcommand{\PropA}{p}
\newcommand{\PropB}{q}
\newcommand{\PropC}{r}
\newcommand{\PropD}{s}
\newcommand{\AlgInterp}[2]{{#1}_{#2}}
\newcommand{\SymbDef}{:=}
\newcommand{\ThreeValuesSet}{V_3}
\newcommand{\DesSetA}{D}
\newcommand{\GenSubs}[1]{S}
\newcommand{\TPart}{T}
\newcommand{\FPart}{F}
\newcommand{\ValuesSetComp}[2]{{#1}{\setminus}{#2}}
\newcommand{\AlgA}{\mathbf{A}}
\newcommand{\dv}{d}
\newcommand{\uva}{u_1}
\newcommand{\uvb}{u_2}
\newcommand{\Sep}{\mathrm{S}}
\newcommand{\SepB}{\mathrm{U}}
\newcommand{\LangAlgA}{\mathbf{L}_{\Sigma}(P)}
\newcommand{\LangSetA}{L_{\Sigma}(P)}
\newcommand{\LangSetAp}{L_{\Sigma}(\{\PropA\})}
\newcommand{\DesSeps}[1]{\Omega_{#1}}
\newcommand{\NDesSeps}[1]{\mho_{#1}}
\newcommand{\Hom}{\mathsf{Hom}}
\newcommand{\End}{\mathsf{End}}
\newcommand{\ValuationA}{v}
\newcommand{\CalcA}{\mathsf{R}}
\newcommand{\RuleA}{\mathsf{r}}
\newcommand{\AnalyticSetA}{\Theta}
\newcommand{\NSetSetCR}[1]{\blacktriangleright_{#1}}
\newcommand{\FmSetComp}[1]{{#1}^{\mathsf{c}}}
\newcommand{\SetCut}{\Pi}
\newcommand{\AnaSetSetCR}[2]{\SetSetCR_{#1}^{#2}}
\newcommand{\NAnaSetSetCR}[2]{\NSetSetCR{#1}^{#2}}
\newcommand{\Subf}[1]{\mathsf{subf}(#1)}
\newcommand{\SubfTheta}[2]{\mathsf{subf}^{#1}(#2)}
\newcommand{\Props}[1]{\mathsf{props}(#1)}
\newcommand{\PowerSet}[1]{\mathcal{P}(#1)}
\newcommand{\TreeA}{\mathsf{t}}
\newcommand{\NodeA}{\mathsf{n}}
\newcommand{\Star}{\star}
\newcommand{\FmlasTree}[1]{\mathsf{fmlas}(#1)}
\newcommand{\NodeLabel}[2]{\mathsf{lab}^{#1}(#2)}
\tikzset{
	treenode/.style = {align=center, inner sep=0pt, text centered},
	Ske/.style = {treenode, ellipse, double, draw=black,
		minimum width=6pt, thick},% arbre rouge noir, noeud noir
	PIA/.style = {treenode, ellipse, black, draw=black,
		minimum width=6pt},% arbre rouge noir, noeud rouge
	Crit/.style = {treenode, rectangle, draw=black,
		minimum width=0.5em, minimum height=0.5em}% arbre rouge noir, nil
}
\newcommand{\fns}{\footnotesize}
\newcommand{\aneg}{\ensuremath{\neg}\xspace}
\newcommand{\aand}{\ensuremath{\wedge}\xspace}
\newcommand{\aor}{\ensuremath{\vee}\xspace}
\newcommand{\ararr}{\ensuremath{\rightarrow}\xspace}
\newcommand{\alrarr}{\ensuremath{\leftrightarrow}\xspace}
\newcommand{\wdia}{\ensuremath{\Diamond}\xspace}
\newcommand{\tv}{\ensuremath{\mathbf 1}}
\newcommand{\tvb}{\ensuremath{\mathbf{\textcolor{blue}{1}}}}
\newcommand{\uv}{\ensuremath{\nicefrac{\mathbf{1}}{\mathbf{2}}}}
\newcommand{\uvg}{\ensuremath{\mathbf{\textcolor{green}{\nicefrac{1}{2}}}}}
\newcommand{\bv}{\ensuremath{\mathbf{0}}}
\newcommand{\bvr}{\ensuremath{\mathbf{\textcolor{red}{0}}}}
\newcolumntype{P}[1]{>{\centering\arraybackslash}p{#1}}
\newcommand{\smName}[1]{mc.{#1}}
\newcommand{\smcName}[1]{smc.{#1}}
 \newcommand{\half}{\frac{1}{2}}
\newcommand{\lf}{\lfloor}
\newcommand{\rf}{\rfloor}
\title{Generating proof systems for \\  three-valued propositional logics}
\author{Vitor Greati\thanks{(i) Universidade Federal do Rio Grande do Norte, Brazil and (ii) University of Groningen, The Netherlands. Work supported by the FWF project P33548.} \and Giuseppe Greco\thanks{Vrije Universiteit Amsterdam, The Netherlands.} \and Sérgio Marcelino\thanks{SQIG - Instituto de Telecomunica\c{c}\~oes,
Dep. Matem\'atica - Instituto Superior T\'ecnico,
Universidade de Lisboa, Portugal. Research funded by FCT/MCTES through national funds and when applicable co-funded by EU under the project UIDB/50008/2020.} \and Alessandra Palmigiano\thanks{(i) Vrije Universiteit Amsterdam, The Netherlands and (ii) Department of Mathematics and Applied Mathematics, University of Johannesburg, South Africa. The research of the second and fourth author has been funded in part by the NWO grant KIVI.2019.001.} \and Umberto Rivieccio\thanks{(i) Universidade Federal do Rio Grande do Norte, Brazil and (ii)  Universidad Nacional de Educación a Distancia, Spain.}}
\date{}
\begin{document}

\maketitle

\begin{abstract}
%{\color{blue} UM (reminder): say (somehow) that nothing we do is new, we are just applying the results of Baaz and S \& S to the 3-valued case.

%S: mention symbolic procedures for proof-search and counter-model search, it is much more than guaranteeing that the logic is decidable... }

%We review the proof-theoretic literature on propositional finite-valued logics and, in particular, so-called \emph{sub-classical three-valued logics}, namely logics characterized by truth tables that coincide with the classical ones on the values $\{0,1\}$.

%We  consider and compare various generalizations and refinements of traditional Hilbert-style, tableaux, natural deduction, relational and sequent calculi. Specific, desirable features--such as automatic generation of rules, analyticity, proof search and counter-model generation procedures--will be considered. 
%A preliminary analysis of the state-of-the-art literature on non-classical logics, and three-valued logics in particular, reveals 

%{\color{blue} UM: Isn't this incipit a bit too in medias res? \textcolor{red}{G. I rephrased the incipt: let me know what you think.}}
%
In general, providing an axiomatization for an arbitrary logic is a task that may require some ingenuity. In the case of logics defined by a finite logical matrix  
(three-valued logics being a particularly simple example), the generation of suitable
finite
%{\color{blue} 
axiomatizations %}  %\textcolor{red}{(aka logical calculi / symbolic formalism...) G. I do not know which expression Sergio prefers to use here: please, suggest something} 
can be completely automatized, essentially %exploiting the matrix semantics.
%{\color{blue} 
by expressing the matrix tables via inference rules.
 %}
%the language of the logic [UM: I added  this which is more informative, but is it wrong?;)] ... G: this is true in the case of multiple conclusion Hilbert-style calculi. In the case of 3-labelled calculi we extend the language with labels. So, I would suggest something like: "presenting /capturing / translating the matrix tables in the form of / via / by inference rules" (below in this abstract we use "(i) convert the matrix semantics into rule form")}. 
In this chapter we illustrate how two
formalisms, the
\emph{3-labelled calculi}
of Baaz, Ferm\"uller and Zach
 and 
%UM: NO NEED FOR THE REFERENECES INSIDE THE ABSTRACT I'D SAY~\cite{xxx} 
the
\emph{multiple-conclusion (or \SetSet{}) Hilbert-style calculi} of Shoesmith and Smiley,
may be uniformly employed to axiomatize logics defined by a three-valued logical matrix.
%~\cite{xxx}, 
%as well as what we call their \emph{streamlined} versions. 
We discuss their main properties (related to completeness, decidability and proof search) and  make a systematic comparison between both formalisms. %\textcolor{red}{G. Again: here I used "formalisms"... if this expression does not sound ok to you, feel free to propose something else. UM: 'Formalisms' OK with me!}. 
We observe that either of the following strategies are pursued: (i) expanding the \emph{metalanguage} 
of the formalism 
(via labels or types) or (ii) generalizing the usual notion of \emph{subformula property}
%{\color{blue}
(as done in recent work by C. Caleiro and S. Marcelino)
%} 
while remaining as close as possible to the original language of the logic. In both cases,  desirable requirements are to guarantee the decidability of the associated symbolic procedure, as well as the possibility of an effective proof search.
%
%We argue that 
%In both cases 
The  generating procedure common to both formalisms can be described as follows: first (i)  convert the matrix semantics into rule form (we refer to this step as the \emph{generating subprocedure}) and then (ii) simplify the set of rules thus obtained, essentially relying on the defining properties of any Tarskian consequence relation (we refer to this step as the \emph{streamlining subprocedure}).
%
%As far as provability is concerned, it is clear that a (streamlined) 3-labelled calculus $(s)\ell.L$ and a (streamlined) multiple conclusion Hilbert-style calculus $(s)m.L$ for a given logic $L$  are equivalent, %w.r.t.~provability,  for the corresponding generating procedures produce  calculi that are complete w.r.t.~the matrix semantics of $L$. %
%Indeed, 
%
We illustrate through some examples that, if a minimal 
expressiveness assumption is met (namely, if the matrix defining the logic is \emph{monadic}), 
then it is straightforward to define  effective translations  guaranteeing
the equivalence between the
3-labelled and the \SetSet{}
approach. 
%
%{\color{blue} UM (reminder): 
%IDEA IS: $n$-labelled is equivalent to multiple conclusion for any finite-valued logic, under the assumption of monadicity.
%
%say that we illustrate through several examples that straightforward translations between both formalisms exist under certain expressivity assumptions.
%
%...3-labelled are as powerful as multiple conclusion under the monadicity assumption. This can be extended beyond monadicity (by SS's results) but it has not been shown but only conjectured that the resulting calucli are analytic (this could be future work).
%}
%
%Relying on this observation, we  provide an effective syntactic translation from $sm.L$-proofs into $s\ell.L$-proofs.
%{\color{blue} We anticipate that a similar strategy will allow us to introduce a converse translation (from $s\ell.L$-proofs into $sm.L$-proofs), but leave the technical details
%for future research.}
%
%{\color{blue}UM: and vice versa too? (If so let's highlight it) ... G: Vitor and Sergio are confident that we can prove also the other direction using separators (and I tend to agree), but so far we do not have it: it should be stated as future work}
%
%We  explore up to which extent the two strategies are mutually compatible, aiming to provide formal translations between proofs of different formalism whenever possible.
\end{abstract}

%\tableofcontents

\section{Introduction}
\label{sec:Introduction}

Different types of logical calculi are better suited for different purposes. 
As is well known, so-called \emph{Gentzen-style} (or \emph{sequent}) \emph{calculi} are commonly employed whenever we are interested in automatizing proof search. On the other hand, so-called \emph{natural deduction calculi} are often taken as benchmarks for deciding when two derivations amount to the same argument, while \emph{Hilbert-style calculi} are a most useful tool
in characterizing the sets of formulas that are
closed with respect to the derivability relation of
a given logic. %\footnote{The first step in the study of  algebraic models of a  logic is in most cases a proof of completeness with respect to some Hilbert-style calculus. Indeed, the whole theory of the algebraization of logics pioneered by H.~Rasiowa~\cite{Ra74} and perfected by W.~Blok and D.~Pigozzi~\cite{BP89}relies on the strong formal analogies that exist between the equational consequence of abstract algebras and deductive systemspresented via Hilbert-style calculi.} 
In this chapter, we shall be concerned with automatizing proof system generation, that is, with defining a \emph{procedure for generating decidable calculi} for a large class of logics, namely all logics that can be presented via a single finite-valued matrix and so, in particular, for three-valued logics. 

Before  getting into further technical details, let us spend a few words on what we mean by a \emph{three-valued logic}, as well as on the choice of the particular logics considered in the present chapter. In the literature on (non-classical) logics, a \emph{finite-valued} (propositional) logic is usually taken to be a logic determined or defined by some finite structure, typically a set whose elements
are interpreted as truth values in some intuitive sense. In turn, a \emph{propositional logic} is usually defined as
a binary relation  on the set of formulas -- inductively constructed from propositional variables by means of a given set of propositional connectives -- a relation
that satisfies the  postulates laid out by A.~Tarski~\cite[p.~31]{Ta56}; this is nowadays a standard notion
(though not the only existing one) 
in formal logic.

%Tarski, Alfred. Logic, semantics, metamathematics: papers from 1923 to 1938. Hackett Publishing, 1983. 

It has been shown that every propositional logic 
satisfying Tarski's postulates can be defined by means of a (possibly infinite) family of (possibly infinite) \emph{logical matrices}~\cite{W88}. It is thus fair to
affirm that
logical matrices constitute a most general method 
(i.e.~a standard \emph{semantics}) for introducing  arbitrary
(Tarskian) logics. 
From this standpoint, finite-valued logics
may be taken to be those logics that can be defined
by means of a \emph{finite} set of \emph{finite} logical matrices -- one cannot, without loss of generality, replace ``finite set of finite matrices'' by ``a single finite matrix'' (see~\cite{caleiro2018characterizing} for further discussion and counter-examples).
In this broad sense,
\emph{three-valued logics} are thus  those definable by some finite set of three-element logical matrices (also in this case we cannot replace ``finite set'' by 
``a single (three-element) matrix'').  
For technical reasons, in the present chapter
we shall narrow our focus to logics determined by \emph{a single  matrix}.
 It is to these logics, as we shall illustrate through plenty of examples, that
 the proof formalisms here considered  apply more straightforwardly\footnote{The   techniques we present are not limited to the single matrix case (regarding multiple conclusion calculi, see e.g.~\cite{marcelinowollic19}). %{\color{blue}UM: SERGIO ADD REF}). 
 However, in order to deal with arbitrary three-valued (or indeed, more generally, finite-valued) logics, one needs more sophisticated tools (e.g.~from the theory of partial and non-deterministic matrices; cf.~Section \ref{sec:Conclusion}) that would not fit well within the scope of the present contribution.}.

By the above considerations, we shall    fix a three-element set,
say $\{ \bv , \uv, \tv \}$, as our standard space of truth values. A particular three-valued logic is then determined by a choice of propositional connectives (each  determined by the corresponding truth table) together with a choice of one or more sets of \emph{designated} truth values. 
%Virtually 
All the non-trivial three-valued logics 
%to be found in the existing literature 
%{\color{blue} S: actually we do not need to say that, the others are trivial...either empty set of designated elements or the three elements are designated} 
arise in this way when combining the logics obtained by designating either a singleton (say $\{ \tv \} $) or a two-element set (say $\{ \uv, \tv \}$).

Over an arbitrary propositional language, the number of three-valued truth tables (and therefore of possible logics) obtainable in the above-described way is obviously unbounded: in the  present chapter
we shall  consider a few examples chosen according to the criteria detailed below (see Section \ref{sec:CaseStudies}).
We shall introduce 3-labelled calculi~\cite{BaaFerZac98} and \SetSet{} Hilbert-style calculi~\cite{shoesmithsmiley1978} for a number of logics determined by a single three-valued matrix,
%{\color{purple} S:make clear, just a single matrix! G. Is it fine now? ... I also tried to spot all the other points in the paper in which we should be precise about the class of logics we are concerned in this paper... but I might have missed some, so please feel free to fix this whenever needed. Moreover, could you add some citations here?}, 
as well as what we call their \emph{streamlined} versions. We shall discuss their main properties (completeness, decidability, effective proof search) and  draw a first systematic comparison of both formalisms. 

\SetSet{} Hilbert-style calculi are a generalization of traditional Hilbert-style calculi (introduced by Frege and popularized by Hilbert) in the sense that their metalanguage is reduced to a minimum, and inference rules just manipulate \emph{sets of formulas} that constitute consecutions of (that is, belong to) the underlying logic. 
%\textcolor{red}{"and rules admit only immediate premises" G. the last feature is what Vitor substained in one occasion... but I think this is not, strictly speaking, correct: it is true that we usually write the definition of rule this way, but look at actual derivations: we may pick a formula occurring far away in the branch as a premise, so these are "global" rules anyhow, no? see the labels I added in the derivation on page 25 to emphasize this point}.
%Moreover, streamlined multiple conclusion Hilbert-style calculi are a generalization of ``multiple conclusion natural deduction calculi'' \cite{}, insofar rules are either a General form of logical Introduction rules $GI\conn$ (where a principal connective $\conn$ is introduced in the conclusions and some active formulas might occur under the scope of separators), or a General form of logical Elimination rules $GE\conn$ (where a principal connective $\conn$ is eliminated from the premises and some active formulas might occur under the scope of separators). 
%\textcolor{red}{"and, a subclass, insofar rules admit only immediate premises (discharging rules, like the $I\ararr$ or $I\neg$ in the usual natural deduction calculus for classical logic, are banned)". G. Same comment as before.}
3-labelled calculi, on the other hand, are not Hilbert-style, for their language includes labels and rules manipulate \emph{sets of labelled formulas} or, equivalently,
\emph{3-sequents}.
%but it is the closest approximation in that their metalanguage does not need to include connectives that are order-reversing in some coordinate (like the Gentzen symbol for syntactic derivability relation $\Srarr$). 
The 3-labelled calculi generated for three-valued logics, as we will see, can actually be seen as generalizations of ``multiple-conclusion sequent calculi'' (see \cite[Chapter 3]{negri2008structural}), insofar as 
the distinction between structural and logical rules is in place, with zeroary rules being a generalized form of identity rules,
cut rules being a generalized form of (multi)cuts,
and each connective being associated with
its own set of introduction rules.

We observe, however, that the  core of the corresponding generating procedures is essentially the same, and amounts to first (i) converting the matrix semantics in rule form (we refer to this step as the \emph{generating subprocedure}) and then (ii) simplifying the set of rules thus obtained, essentially relying on the defining properties of any Tarskian consequence relation (we refer to this step as the \emph{streamlining subprocedure}). The key point where both generating subprocedures depart from each other has to do with the specific properties of the output. In the case of 3-labelled calculi, we can rely on labels corresponding to truth values, so turning matrices into rule form is a quite straightforward process. In the case of \SetSet{} Hilbert-style calculi, this additional information has to be conveyed in the logical language, hence the need of \emph{separators}, which are logical formulas capable of telling apart  truth values pairwise in a  sense that  will be formally specified in Section~\ref{Axiomatizing3valuedMatricesTheMonadicCase}. 

The chapter is organized as follows. In Section \ref{sec:Preliminaries}, we introduce basic
notions and terminology about algebras, languages, logics and logical matrices.
In Section \ref{sec:CaseStudies},
we provide an overview of the selected
three-valued logics that will be discussed
in subsequent sections. We devote special
attention to the traditional Hilbert-style calculi
available in the literature for
such logics, highlighting the fact
that for some of them no finite calculus
is currently known (whilst \SetSet{} finite calculi are guaranteed to exist by~\cite{shoesmithsmiley1978}).
In Section~\ref{sec:3LabelledCalculi},
we introduce the formalism of 3-labelled calculi and associated procedures for generating and streamlining analytic calculi
for three-valued logics.
We do the same in Section~\ref{sec:MultipleConclusionCalculi} for the formalism of \SetSet{} Hilbert-style calculi. Despite
the fact that the generating subprocedure
in the latter case demands an
expressiveness condition to be
satisfied by the
logical matrix at hand, we will
provide analytic finite axiomatizations for some non-sufficiently expressive
matrices in Subsection~\ref{sec:nonmonadic}
(note that the result of~\cite{shoesmithsmiley1978}
just referred to does not mention analyticity).
%In this same section we  also
%see an alternative, upward-branching
%notation for \SetSet{} derivations, %hopefully making
%the \SetSet{} formalism more familiar to
%readers  with a sequent calculi background.
In Section~\ref{ComparisonBetween3labelledAndMultipleConclusionCalculi} we compare 
the generating procedures for 3-labelled
calculi and \SetSet{} Hilbert-style
calculi, showing possibilities of
translations from \SetSet{} 
to 3-labelled rules and proofs.
Finally, Section~\ref{sec:Conclusion}
closes the chapter with some concluding remarks
and suggestions for further research.

%In Section~\ref{ComparisonBetween3labelledAndMultipleConclusionCalculi}, we show how the generating  and the streamlining sub-procedures are in correspondence with one another in a precise sense (cf.~Theorems~\ref{thm:FullMultipleIFFFullLabelledCalcluli} and~\ref{thm:Derivability-MultipleToLabelledCalculi}, \ref{thm:ProofTranslation-MultipleToLabelledCalculi}, respectively). \textcolor{red}{In the conclusions (Section~\ref{sec:Conclusion}) XXX}.
%In Appendix~\ref{SomeNonMonadicCases} we focus on some  fragments of the logics considered in Section~\ref{sec:CaseStudies} that are interesting for theoretical reasons. In Appendix~\ref{QuasiCanonicalSequentCalculi} we consider so-called quasi-canonical sequent calculi~\cite{}, well-behaved sequent calculi which also capture a large class of three-valued logics; once more, specific separators play a special role in converting the additional expressivity of 3-labelled calculi in the language of quasi-canonical sequent calculi. 

\section{Preliminaries}
\label{sec:Preliminaries}

%\textcolor{red}{I find section 2.2 in [ Caleiro, Marcelino, \emph{Analytic calculi for monadic PNmatrices}] particularly insightful / well-written, especially the last paragraph of page 8 and the rest of the section... see the definition of soundness/completeness, monadic separator and monadic matrix... I suggest to retrieve these definitions here... we need to say as soon as possible what is a separator in a neat way. Also the notion of $\Phi$- analyticity is better explained there, I guess, immediately before section 2.2.}

%\textcolor{red}{Can we also say something in general about the set of separator for an arbitrary logic induced by a monadic matrix? E.g.~How many separators we need in general?}

A \emph{(propositional) signature}
is a family
$\Sigma \SymbDef \{\Sigma_k\}_{k \in \omega}$,
where each $\Sigma_k$ is
a collection of $k$-ary \emph{connectives}
(we qualify a connective as \emph{nullary} when $k=0$, as \emph{unary} when $k=1$, and as \emph{binary} when $k=2$).
A signature is finite when $\bigcup_{k \in \omega}\Sigma_k$
is finite.
In this work, the considered examples are in propositional signatures
containing the unary connectives $\neg, \circ, \bullet$ and $\Diamond$,
and the binary connectives $\land, \lor$ and $\to$.
%Given a sequence $\conn_1,\ldots,\conn_n$
%of connectives, we denote by $\Sigma_{\conn_1\ldots\conn_n}$
%the propositional signature containing only the connectives
%of that sequence. Thus, for example, $\Sigma_{\neg\lor\land}$ is the
%signature containing only the connectives $\neg$, $\land$ and $\lor$.
%
A \emph{$\Sigma$-algebra} is a structure
$\mathbf{A} \SymbDef \langle A, \AlgInterp{\cdot}{\mathbf{A}} \rangle$, where $A$ is a non-empty
set called the \emph{carrier} or \emph{universe} of $\mathbf{A}$ and, for each $\conn \in \Sigma_k$ and $k \in \omega$, $\AlgInterp{\conn}{\mathbf{A}} : A^k \to A$ is the \emph{interpretation} of $\conn$
in $\mathbf{A}$. 
A $\Sigma$-algebra is \emph{finite} when both $\Sigma$ and its carrier are finite.
Given a denumerable set $P$ of
\emph{propositional variables}, 
the absolutely free algebra over
$\Sigma$ freely generated by $P$,
or simply the
\emph{(propositional) language} over $\Sigma$ (generated by $P$),
is denoted by $\LangAlgA$,
and its members are called \emph{$\Sigma$-formulas}.
The set of all subformulas of a given formula $\FmA \in \LangSetA$ 
%(in what follows we sometimes use the notation $\FmA^{\LangAlgA}$ alternatively)
will be denoted by $\Subf{\FmA}$,
and the set of all propositional variables
occurring in $\FmA$
will be denoted by $\Props{\FmA}$.
Given $\FmSetA \subseteq \LangSetA$, we let
$\FmSetComp{\FmSetA} \SymbDef \LangSetA{\setminus}\FmSetA$.

The collection of homomorphisms between
two $\Sigma$-algebras $\mathbf{A}$
and $\mathbf{B}$ is denoted by $\Hom(\mathbf{A},\mathbf{B})$.
Furthermore, the set
of endomorphisms on $\mathbf{A}$ is denoted by
$\End(\mathbf{A})$
and each one of the members $\sigma \in \End(\LangAlgA)$
is called a \emph{substitution}.
In case $\PropA_1,\ldots,\PropA_n$
are the only propositional variables occurring in $\FmA \in \LangSetA$,
we say that $\FmA$ is $n$-ary and denote by $\FmA_\mathbf{A}$
the $n$-ary operation on $A$
such that,
for all $a_1,\ldots,a_n \in A$,
$\FmA_\mathbf{A}(a_1,\ldots,a_n) = h(\FmA)$, for
an $h \in \Hom(\LangAlgA, \mathbf{A})$ with
$h(\PropA_i) = a_i$ for each
$1 \leq i \leq n$.
Instead of using ``$n$-ary'', we may conveniently use the word \emph{unary}, when $n=1$, and \emph{binary}, when $n=2$,
as we do for connectives.
Also, if $\FmB_1,\ldots,\FmB_n \in \LangSetA$,
we let $\FmA(\FmB_1,\ldots,\FmB_n)$ denote the formula
$\AlgInterp{\FmA}{{\LangAlgA}}(\FmB_1,\ldots,\FmB_n)$.
Moreover, when $\Theta$ is a set of $n$-ary formulas,
we let $\Theta(\FmB_1,\ldots,\FmB_n) \SymbDef \{\FmA(\FmB_1,\ldots,\FmB_n) \mid \FmA \in \Theta\}$.

%A \SetFmla{} \emph{rule of inference} is a pair $\SetFmlaRuleName \SymbDef \MCRule{\FmSetA}{\FmB} \in \PowerSet{\LangSetA} \times \LangSetA$, where $\FmSetA$ is the \emph{antecedent} or set of \emph{premises} and $\FmB$ is the \emph{succedent} or \emph{conclusion} of the rule.

%A \SetFmla{} \emph{(Hilbert-style) calculus} \textcolor{red}{\emph{with (open) assumptions} (from now on just `\SetFmla{} Hilbert-style calculus')} is a collection $\SetFmlaCalculusName$ of \SetFmla{} rules of inference. A \emph{derivation} in $\SetFmlaCalculusName$ is a sequence of formulas where each item in the sequence is a premise or follows from an application of a rule of inference to previous items. A \emph{proof} of $(\FmSetA,\FmB)$ in $\SetFmlaCalculusName$ is a finite derivation in $\SetFmlaCalculusName$ ending in $\FmB$ possibly using premises in $\FmSetA$. When such proof exists, we write $\FmSetA \vdash_\SetFmlaCalculusName \FmB$. It can be easily checked that $\vdash_\SetFmlaCalculusName$ is a \SetFmla{} logic. A \SetFmla{} $\SetFmlaCalculusName$ \emph{axiomatizes} a logical matrix $\MatA$ when  $\vdash_\SetFmlaCalculusName \,=\, \vdash_\MatA$. Not every logical matrix is axiomatizable by a \emph{finite} \SetFmla{} calculus, as illustrated in the next example.

In what follows, we report the usual definitions of propositional logics as consequence relations over a propositional language.

\begin{definition}
\label{def:SetFmla}
A \emph{(finitary)} \SetFmla{} \emph{logic} is a
relation $\vdash \, \subseteq \, \PowerSet{\LangSetA} \times \LangSetA$
satisfying the following `Tarski'  conditions~\cite[p.~31]{Ta56}, for all $\FmSetA,\FmSetB,\FmSetC, \{\FmA,\FmB\} \subseteq \LangSetA$:
\begin{table}[H]
    \begin{tabular}{ll}
         (R)eflexivity & $\FmSetA, \FmA \vdash \FmA$ \\
         (M)onotonicity & if $\FmSetA \vdash \FmB$, then $\FmSetA, \FmSetB \vdash \FmB$ \\
         (T)ransitivity & if $\FmSetA,\FmSetC \vdash \FmB$ and $\FmSetA \vdash \FmC$ for each $\FmC \in \FmSetC$, then $\FmSetA \vdash \FmB$ \\
         (S)tructurality & if $\FmSetA \vdash \FmB$ and $\sigma \in \End(\LangAlgA)$, then $\sigma(\FmSetA) \vdash \sigma(\FmB)$ \\
         (F)initariness & if $\FmSetA \vdash \FmB$,
         then $\FmSetA^\prime \vdash \FmB$
         for some finite $\FmSetA^\prime \subseteq \FmSetA$
    \end{tabular}
\end{table}
\end{definition}

\begin{definition}
\label{def:SetSet}
A \emph{(finitary)} \SetSet{} \emph{logic} is a
relation $\SetSetCR \, \subseteq \, \PowerSet{\LangSetA} \times \PowerSet{\LangSetA}$
satisfying the following `Scott' conditions~\cite{shoesmithsmiley1978}, for all $\FmSetA,\FmSetA^\prime, \FmSetB, \FmSetB^\prime \subseteq \LangSetA$:
\begin{table}[H]
    \begin{tabular}{ll}
         (O)verlap & if $\FmSetA \cap \FmSetB \neq \EmptySet$, then $\FmSetA \SetSetCR \FmSetB$ \\
         (D)ilution & $\FmSetA \SetSetCR \FmSetB$, then $\FmSetA,\FmSetA^\prime \SetSetCR \FmSetB,\FmSetB^\prime$\\
         (C)ut for sets & if $\FmSetA, \SetCut \SetSetCR \FmSetComp{\SetCut}, \FmSetB$ for all $\SetCut \subseteq \LangSetA$, then $\FmSetA \SetSetCR \FmSetB$\\
         (S)tructurality & if $\FmSetA \SetSetCR \FmSetB$ and $\sigma \in \End(\LangAlgA)$, then $\sigma(\FmSetA) \SetSetCR \sigma(\FmSetB)$ \\
         (F)initariness & if $\FmSetA \SetSetCR \FmSetB$,
         then $\FmSetA^\prime \SetSetCR \FmSetB^\prime$
         for some finite $\FmSetA^\prime \subseteq \FmSetA$
         and $\FmSetB^\prime \subseteq \FmSetB$
    \end{tabular}
\end{table}
\end{definition}
\noindent Every \SetSet{} logic $\SetSetCR$ induces a \SetFmla{} logic $\vdash_{\SetSetCR}$
such that $\FmSetA \vdash_{\SetSetCR} \FmB$
if, and only if, $\FmSetA \SetSetCR \{\FmB\}$.
The complement of a \SetSet{} logic
$\SetSetCR{}$ is denoted by $\NSetSetCR{}$. 

A \emph{(deterministic logical) $\Sigma$-matrix} $\MatA$
is a structure $\langle \mathbf{A}, D \rangle$
where $\mathbf{A}$
is a $\Sigma$-algebra,
the universe $A$ is the set of \emph{truth values of $\MatA$},
and the members of $D \subseteq A$ are called \emph{designated values}. We will write $\overline{D}$ to refer to $A{\setminus{}}D$.
A matrix is \emph{finite} when its underlying algebra
is finite.
In this chapter, we will most of the time work with
the set $\ThreeValuesSet \SymbDef \{\bv, \uv, \tv\}$
of truth values.
The mappings in
$\Hom(\LangAlgA, \mathbf{A})$ are called \emph{$\MatA$-valuations}. 
Every $\Sigma$-matrix
determines a \SetSet{} logic
$\SetSetCR_\MatA$
such that $\FmSetA \SetSetCR_{\MatA} \FmSetB$
if{f} $h(\FmSetA) \cap \overline{D} \neq \varnothing$
or $h(\FmSetB) \cap D \neq \varnothing$
for every $\MatA$-valuation $h$,
as well as a \SetFmla{} logic
$\vdash_\MatA$
with $\FmSetA \vdash_\MatA \FmB$ iff $\FmSetA \SetSetCR_\MatA \{\FmB\}$.
Given a \SetSet{} logic~$\SetSetCR$ (resp.\ a \SetFmla{} logic~$\vdash$), 
if $\SetSetCR\;\subseteq\;\SetSetCR_{\MatA}$ (resp.\ $\vdash \;\subseteq\; \vdash_{\MatA}$), we shall say that~$\MatA$ \emph{is a model of}
$\SetSetCR$ (resp.\ $\vdash$),
and if the converse also holds
%if $\sequent_{\DefMat} = \sequent$ (resp.\ $\vdash_{\DefMat} \;=\; \vdash$), 
we shall say that~$\MatA$ \emph{characterises}
$\SetSetCR$ (resp.\ $\vdash$).

The pairs $(\FmSetA,\FmB) \in \PowerSet{\LangSetA} \times \LangSetA$ (resp. $(\FmSetA,\FmSetB) \in \PowerSet{\LangSetA} \times \PowerSet{\LangSetA}$) are called
\SetFmla{} \emph{statements} (resp. \SetSet{} \emph{statements}), and we say
that such statements hold according
to a \SetFmla{} logic $\vdash$ (resp. a \SetSet{} logic $\SetSetCR$) when
$\FmSetA \vdash \FmB$ (resp. when $\FmSetA \SetSetCR \FmSetB$).
In what follows, we use `H-calculus' for
``Hilbert-style calculus''
and `H-rule' for ``Hilbert-style rule of inference''.
A \emph{\SetFmla{} H-calculus} is a collection
of \SetFmla{} statements $(\FmSetA, \FmB)$, which, in this context,
are called \emph{\SetFmla{} H-rules}
and denoted by $$\SetFmlaHRule{\FmSetA}{\FmB}{},$$
being $\FmSetA$ the \emph{antecedent} (or \emph{set of premises}) and $\FmB$,
the \emph{succedent} (or \emph{conclusion}) of the rule.
H-rules with empty antecedents are called \emph{axioms}.
As usual, given a \SetFmla{} H-calculus $\SetFmlaHName$,
we say that the \SetFmla{} statement
$(\FmSetA, \FmB)$ is \emph{provable} in $\SetFmlaHName$
whenever there is a finite sequence of formulas
$\FmA_1,\ldots,\FmA_n$, where $\FmA_n = \FmB$
and
each $\FmA_i$ is either a premise (that is, a formula in $\FmSetA$) or follows
from previous formulas in the sequence by an
instance of a H-rule of $\SetFmlaHName$.
The relation $\vdash_{\SetFmlaHName} \, \subseteq \, \PowerSet{\LangSetA} \times \LangSetA$
defined such that $\FmSetA \vdash_\SetFmlaHName \FmB$
if, and only if, the statement $(\FmSetA, \FmB)$ is provable
in $\SetFmlaHName$ is easily seen to be a finitary
\SetFmla{} logic.
In Section~\ref{sec:MultipleConclusionCalculi},
we shall introduce \SetSet{} H-calculi~(based on \cite{shoesmithsmiley1978}), which generalize \SetFmla{} H-calculi by allowing for sets of formulas
to appear as succedents of the rules of inference.

%\section{Case studies}
\section{ Prominent three-valued logics}
\label{sec:CaseStudies}

%\textcolor{red}{G3. \ Here we provide 9 examples: matrices and one simple Hilbert-style calculus introduced in the literature per each logic... preferably the first one that was introduced or a calculus that is well-known.}

For the most part of this chapter, we shall be dealing with logics that have an independent 
%(that is, not only formal)
interest (i.e.~not purely formal but also philosophical or historical), having been introduced by renowned logicians with particular applications or motivations in mind: this is for instance the case of the time-honoured logical systems \LThreeName, \GThreeName{} and \SKName, which bear the names of  J.~\L ukasiewicz, K.~G\"odel and S. C. Kleene. 

In Section \ref{sec:nonmonadic}, we shall also consider a few logics whose interest is mostly theoretical, in that they will help us exemplify certain advantages or difficulties related to the \SetSet{} approach: this will be, in particular, the case of logics defined by non-monadic matrices, such as the implicative fragment of \L ukasiewicz logic.

A third criterion guiding our choice (partly overlapping with the first) has been to include the three-valued logics that appear in other chapters of the present book,  in particular in the contributions by A. Avron \& A. Zamansky~\cite{avronzamanssamebook} and by F. Paoli \& M. Pra Baldi~\cite{poaliprabaldisamebook}. 
%\textcolor{red}{(we shall give  more precise references in the relevant parts of the subsequent sections)}. 

%A fourth and last criterion justifies the move to multiple conclusion calculi (either 3-labelled or Hilbert-style) -- namely \SetSet{} consequence relations, indeed some logics that cannot be presented via a Hilbert-style calculus with closed assumptions -- namely \upshape{\textsc{$\emptyset$-Fmla}} consequence relations (i.e.~Strong Kleene logic \SKName, which does not have theorems), and some logics cannot be presented via a single conclusion Hilbert-style calculus (i.e.~the logics \MSevenName{} and \MEightName{}) -- namely \SetFmla{} consequence relations. 

\begin{table}
\centering
\begin{tabular}{llp{4cm}}
\toprule
Logic & $D$ & Interpretations \\
% & & \\
%\mc{3}{c}{\SetFmla{} with closed assumptions} \\
\midrule
\L ukasiewicz (\LThreeName) & $\DSetOne$ & $\lukConj$, $\lukDisj$, $\lukNeg$, $\lukImp$ \\
Logic of Paradox (\LPName) & $\DSetTwo$ & $\lukConj$, $\lukDisj$, $\lukNeg$ \\
Logic of Paradox with implication (\LPImpName) & $\DSetTwo$ & $\lukConj$, $\lukDisj$, $\lukNeg$, $\LPImp$ \\
Bochvar-Kleene (\BKName) & $\DSetOne$ & $\BKleeConj$, $\BKleeDisj$, $\lukNeg$, $\BKleeImp$ \\
Paraconsistent Weak Kleene (\PWKName) & $\DSetTwo$ & $\BKleeConj$, $\BKleeDisj$, $\lukNeg$, $\BKleeImp$ \\ 
Sette's (\SetteName)&$\DSetTwo$&$\setteImp$, $\setteNeg$\\
Sobociński (\SThreeName) & $\DSetTwo$ & 
%$\SobocConj$, 
%$\SobocDisj$, 
$\lukNeg$, $\SobocImp$ \\
Da Costa-D'Ottaviano's (\JThreeName) & $\DSetTwo$ & $\lukConj$, $\lukDisj$, $\lukNeg$, $\JThreeImp$, $\LukPoss \SymbDef \setteNeg$\\
G\"odel (\GThreeName) & $\DSetOne$ & $\lukConj$, $\lukDisj$, $\HNeg$, $\HImp$ \\
Strong Kleene (\SKName) & $\DSetOne$ & $\lukConj$, $\lukDisj$, $\lukNeg$, $\KleeImp$ \\
Post's (\PostName)&$\DSetOne$&$\lukConj,\lukDisj,\postNeg$\\
\MSevenName & $\DSetOne$ & $\bullet_7$ \\ 
\MEightName & $\DSetOne$ & $\bullet_8$ \\
% & & \\
%\mc{3}{c}{\SetFmla{} with open assumptions} \\
%\hline
% & & \\
% & & \\
%\mc{3}{c}{\SetSet{} with open assumptions} \\
%\hline
% & & \\
%    Strong Kleene of Order (SKO) & $\DSetOne, \DSetTwo$& $\lukConj$,$\lukDisj$,$\lukNeg$,$\KleeImp$ \\
%    Weak Kleene of Order (WKO) & $\DSetOne,\DSetTwo$ &$\BKleeConj$,$\BKleeDisj$,$\lukNeg$,$\BKleeImp$ \\
\bottomrule
\end{tabular}
\caption{Summary of the three-valued logics considered in this chapter.
Each row refers to a logic,
first informing its name,
then the set $D \subseteq \ThreeValuesSet$
of designated truth values
and the interpretations of
the connectives in the corresponding logical matrix, associated with
the truth tables presented
in Figure~\ref{fig:truth-tables2}.}
\label{tab:usecases}
\end{table}
%
%{\color{purple}
%UM: Kleene order-preserving IS missing, as far as I can see. I don't know what logics are defined by the \L ukasiewicz matrices you mention, or whether these logics have a name in the literature (perhaps Vitor has seen this in his bibliographical research?). The one given by the two \L ukasiewicz matrices would be (my guess), the order-preserving logic of the corresponding variety. I know that these 'fuzzy order-preserving logics' have been studied by Felix Bou and others in a more general context, and probably their results on \L ukasiewicz logic easily specialize to the 3-valued case, but I don't know this result. We can try to figure it out, if we think it's useful/important. 
%}
%
%Some refs:
%SK: Kleene S.C., Introduction to Metamathematics, North Holland, Amster- dam, 1952. \cite{Kl50}
%
%Bochvar-Kleene: Bochvar D.A., “On a three-valued calculus and its application in the analy- sis of the paradoxes of the extended functional calculus”, Mathematicheskii Sbornik, 4 (1938), 287–308.
%
%PWK: Hallden S., The Logic of Nonsense, Lundequista Bokhandeln, Uppsala, 1949. 
% Bonzio S., Gil-Ferez J., Paoli F., Peruzzi L., “On Paraconsistent Weak Kleene Logic: Axiomatization and algebraic analysis”, Studia Logica, 105, 2, 2017, pp. 253–297.
% 
% LP: Priest, G. (1979). The logic of paradox. Journal of Philosophical Logic , 8 , 219–241. \cite{Priest}
%}

\begin{framed}
\begin{figure}[H]
\begin{table}[H]
    \centering
    \begin{tabular}{@{}c|ccc@{}}
        \toprule
         \lukConj & \bv & \uv & \tv\\
        \midrule
         \bv & \bv & \bv & \bv\\
         \uv & \bv & \uv & \uv \\
         \tv & \bv & \uv & \tv\\
        \bottomrule
    \end{tabular}
    \quad
    \begin{tabular}{@{}c|ccc@{}}
        \toprule
         \lukDisj & \bv & \uv & \tv\\
        \midrule
         \bv & \bv & \uv & \tv\\
         \uv & \uv & \uv & \tv \\
         \tv & \tv & \tv & \tv\\
        \bottomrule
    \end{tabular}
    \quad 
    \begin{tabular}{@{}c|ccc@{}}
        \toprule
         \lukImp & \bv & \uv & \tv\\
        \midrule
         \bv & \tv & \tv & \tv\\
         \uv & \uv & \tv & \tv\\
         \tv & \bv & \uv & \tv\\
        \bottomrule
    \end{tabular}
    \quad 
    \begin{tabular}{@{}c|c@{}}
        \toprule
         & \lukNeg \\
         \midrule
         \bv & \tv\\
         \uv & \uv\\
         \tv & \bv\\
         \bottomrule
    \end{tabular}
\end{table}

\begin{table}[H]
    \centering
    \begin{tabular}{@{}c|ccc@{}}
        \toprule
         \KleeImp & \bv & \uv & \tv\\
        \midrule
         \bv & \tv & \tv & \tv\\
         \uv & \uv & \uv & \tv\\
         \tv & \bv & \uv & \tv\\
        \bottomrule
    \end{tabular}
    \quad
    \begin{tabular}{@{}c|ccc@{}}
        \toprule
         $\LPImp$ & \bv & \uv & \tv\\
        \midrule
         \bv & \tv & \tv & \tv\\
         \uv & \bv & \uv & \tv\\
         \tv & \bv & \uv & \tv\\
        \bottomrule
    \end{tabular}
\end{table}

\begin{table}[H]
    \centering
    \begin{tabular}{@{}c|ccc@{}}
        \toprule
         \BKleeConj & \bv & \uv & \tv\\
        \midrule
         \bv & \bv & \uv & \bv\\
         \uv & \uv & \uv & \uv \\
         \tv & \bv & \uv & \tv\\
        \bottomrule
    \end{tabular}
    \quad
    \begin{tabular}{@{}c|ccc@{}}
        \toprule
         \BKleeDisj & \bv & \uv & \tv\\
        \midrule
         \bv & \bv & \uv & \tv\\
         \uv & \uv & \uv & \uv \\
         \tv & \tv & \uv & \tv\\
        \bottomrule
    \end{tabular}
    \quad 
    \begin{tabular}{@{}c|ccc@{}}
        \toprule
         \BKleeImp & \bv & \uv & \tv\\
        \midrule
         \bv & \tv & \uv & \tv\\
         \uv & \uv & \uv & \uv\\
         \tv & \bv & \uv & \tv\\
        \bottomrule
    \end{tabular}
\end{table}

\begin{table}[H]
    \centering
    %\begin{tabular}{@{}c|ccc@{}}
    %    \toprule
    %     \SobocConj & \bv & \uv & \tv\\
    %    \midrule
    %     \bv & \bv & \bv & \bv\\
    %     \uv & \bv & \uv & \tv \\
    %     \tv & \bv & \tv & \tv\\
    %    \bottomrule
    %\end{tabular}
    %\quad
    %\begin{tabular}{@{}c|ccc@{}}
    %    \toprule
    %     \SobocDisj & \bv & \uv & \tv\\
    %    \midrule
    %     \bv & \bv & \bv & \tv\\
    %     \uv & \bv & \uv & \tv \\
    %     \tv & \tv & \tv & \tv\\
    %    \bottomrule
    %\end{tabular}
    %\quad 
%    \begin{tabular}{c|ccc}
%        \toprule
%         \SobocImp' & \bv & \uv & \tv\\
%        \midrule
%         \bv & \tv & \uv & \tv\\
%         \uv & \bv & \tv & \tv\\
%         \tv & \bv & \uv & \tv\\
%        \bottomrule
%    \end{tabular}
    \begin{tabular}{@{}c|ccc@{}}
        \toprule
         \setteImp & \bv & \uv & \tv\\
        \midrule
         \bv & \tv & \tv & \bv\\
         \uv & \tv & \tv & \bv\\
         \tv & \tv & \tv & \tv\\
        \bottomrule
    \end{tabular}\quad
    \begin{tabular}{@{}c|c@{}}
        \toprule
         & \setteNeg \\
         \midrule
         \bv & \bv\\
         \uv & \tv\\
         \tv & \tv\\
         \bottomrule
    \end{tabular}\quad
    \begin{tabular}{@{}c|ccc@{}}
        \toprule
         \SobocImp & \bv & \uv & \tv\\
        \midrule
         \bv & \tv & \tv & \tv\\
         \uv & \bv & \uv & \tv\\
         \tv & \bv & \bv & \tv\\
        \bottomrule
    \end{tabular}
\end{table}

\begin{table}[H]
    \centering
        \begin{tabular}{@{}c|ccc@{}}
        \toprule
         \JThreeImp & \bv & \uv & \tv\\
        \midrule
         \bv & \tv & \tv & \tv\\
         \uv & \bv & \uv & \tv\\
         \tv & \bv & \uv & \tv\\
        \bottomrule
    \end{tabular}
    %\quad
    %\begin{tabular}{@{}c|c@{}}
    %    \toprule
    %     & \LukPoss \\
    %     \midrule
    %     \bv & \bv\\
    %     \uv & \tv\\
    %     \tv & \tv\\
    %     \bottomrule
    %\end{tabular}
%\end{table}
%
%\begin{table}[H]
%    \centering
\quad
    \begin{tabular}{@{}c|ccc@{}}
        \toprule
             \HImp & \bv & \uv & \tv\\
        \midrule
         \bv & \tv & \tv & \tv\\
         \uv & \bv & \tv & \tv\\
         \tv & \bv & \uv & \tv\\
        \bottomrule
    \end{tabular}
    \quad 
    \begin{tabular}{@{}c|c@{}}
        \toprule
         & \HNeg \\
         \midrule
         \bv & \tv\\
         \uv & \bv\\
         \tv & \bv\\
         \bottomrule
    \end{tabular}
\end{table}
\begin{table}[H]
    \centering
    \begin{tabular}{@{}c|c@{}}
        \toprule
         & \postNeg \\
         \midrule
         \bv & \tv\\
         \uv & \bv\\
         \tv & \uv\\
         \bottomrule
    \end{tabular}\quad
    \begin{tabular}{c|ccc}
        \toprule
         $\bullet_7$ & $\bv$ & $\uv$ & $\tv$ \\
         \midrule
         $\bv$& \uv & \tv & \tv\\
         $\uv$& \tv& \tv&\tv\\
         $\tv$& \uv& \tv&\tv\\
         \bottomrule
    \end{tabular}
    \quad
    \begin{tabular}{c|ccc}
        \toprule
         $\bullet_8$ & $\bv$ & $\uv$ & $\tv$ \\
         \midrule
         $\bv$& \tv & \tv & \tv\\
         $\uv$& \tv& \tv&\tv\\
         $\tv$& \uv& \tv&\tv\\
         \bottomrule
    \end{tabular}
\end{table}
\caption{Three-valued interpretations for the connectives of the logics listed in Table~\ref{tab:usecases}.}
\label{fig:truth-tables2}
\end{figure}
\end{framed}

The three-valued logics of interest
are all listed in Table \ref{tab:usecases}.
We provide below a more detailed
description of them, retrieving from the literature a 
(not necessarily finite or analytic) 
\SetFmla{} H-calculus, when available.
%In the case of logics $\MSevenName$
%and $\MEightName$, which are known
%to be non-finitely axiomatizable
%in \SetFmla{}.

\paragraph{\L ukasiewicz (\L$_3$).}
% REFERENCES
% - The algebras of Lukasiewicz many-valued logic - A historical overview, Cignoli, R
% - Łukasiewicz J., 1920, O logice trójwartościowej (in Polish). Ruch filozoficzny 5:170–171. English translation: On three-valued logic, in L. Borkowski (ed.), Selected works by Jan Łukasiewicz, North–Holland, Amsterdam, 1970, pp. 87–88. ISBN 0-7204-2252-3

The logic \LThreeName{} was introduced 
more than a century ago (1920)
by Jan
\L ukasiewicz,
in a short paper~\cite{lukz1920} written in Polish 
(for an English translation, see~\cite{lukz1920en}).
 \L ukasiewicz's 
intention was to challenge classical logic (which is based on the Aristotelian principle of contradiction) by
introducing a third truth value meant to represent propositions that are neither false nor true but only \emph{possible}.
Although there is nowadays consensus that \LThreeName{} does not successfully model the logic of possibility (this task having been taken over by modal logic), \L ukasiewicz's proposal was later generalized to  $n$-valued logics (for each $n < \omega$)
and even to an infinite-valued one,
leading over time to a huge research output in  the area of mathematical logic  (more recently, in particular, in the mathematical fuzzy logic community:~see e.g.~\cite{hajek98}).

Various inter-definabilities hold among the  
connectives of \L ukasiewicz logics, which in many presentations also include
a 
\emph{multiplicative disjunction} ($\oplus$)
and a \emph{multiplicative conjunction} ($\odot$)
given, in terms of our primitive connectives, by
$\FmA \oplus \FmB : = \neg \FmA \to \FmB$
and $\FmA \odot \FmB := \neg (\FmA \to \neg \FmB)$. 
%As shown by M.~Wajsberg, 
Every logic in the  \L ukasiewicz family can be presented
in the language 
(first used by 
M.~Wajsberg to axiomatize the
infinite-valued \L ukasiewicz logic)
having only
the implication $(\to)$ and the negation $(\neg)$ as primitive: one defines
$\FmA \lor \FmB : = (\FmA \to \FmB) \to \FmB$
and, as in \SKName{} below,
$\FmA \land \FmB := \neg (\neg \FmA \lor \neg \FmB)$. The implication,
in turn, can be given by
$\FmA \to \FmB := \neg \FmA \oplus \FmB$
but is not 
definable by the classical formula
$\neg \FmA \lor \FmB$ nor, indeed, by any formula constructed from the remaining connectives   chosen for our presentation (conjunction, the disjunction and the negation). 
%The motivation was heavily philosophical, as $\L$ukasiewicz's conception of the third value was a form of liberation of the logical coersion introduced by Aristotelian logic and Euclidean geometry~\cite{lukzeng}.

The following is an adequate (sound and complete) 
calculus for \LThreeName{}, which can be found in~\cite{Avr91}:

\begin{table}[H]
\begin{tabular}{l}
$\SetFmlaHRule{}{\FmA \ararr (\FmB \ararr \FmA)}{(\LThreeName1)}$\\[.3cm]
$\SetFmlaHRule{}{(\FmA \ararr \FmB) \ararr ((\FmB \ararr \FmC) \ararr (\FmA \ararr \FmC))}{(\LThreeName2)}$\\[.3cm]
$\SetFmlaHRule{}{(\FmA \ararr (\FmB \ararr \FmC)) \ararr (\FmB \ararr (\FmA \ararr \FmC))}{(\LThreeName3)}$\\[.3cm]
$\SetFmlaHRule{}{((\FmA \ararr \FmB) \ararr \FmB) \ararr ((\FmB \ararr \FmA) \ararr \FmA)}{(\LThreeName4)}$\\[.3cm]
$\SetFmlaHRule{}{((((\FmA \ararr \FmB) \ararr \FmA) \ararr \FmA) \ararr (\FmB \ararr \FmC)) \ararr (\FmB \ararr \FmC)}{(\LThreeName5)}$\\[.3cm]
$\SetFmlaHRule{}{\FmA \aand \FmB \ararr \FmA}{(\LThreeName6)}$\\[.3cm]
$\SetFmlaHRule{}{\FmA \aand \FmB \ararr \FmB}{(\LThreeName7)}$\\[.3cm]
$\SetFmlaHRule{}{(\FmA \ararr \FmB) \ararr ((\FmA \ararr \FmC) \ararr (\FmA \ararr \FmB \aand \FmC))}{(\LThreeName8)}$\\[.3cm]
$\SetFmlaHRule{}{\FmA \ararr (\FmA \aor \FmB)}{(\LThreeName9)}$\\[.3cm]
$\SetFmlaHRule{}{\FmB \ararr (\FmA \aor \FmB)}{(\LThreeName10)}$\\[.3cm]
$\SetFmlaHRule{}{(\FmA \ararr \FmC) \ararr ((\FmB \ararr \FmC) \ararr (\FmA \aor \FmB \ararr \FmC))}{(\LThreeName11)}$\\[.3cm]
$\SetFmlaHRule{}{(\aneg \FmB \ararr \aneg \FmA) \ararr (\FmA \ararr \FmB)}{(\LThreeName12)}$\\[.3cm]
$\SetFmlaHRule{\FmA \quad  \FmA \ararr \FmB}{\FmB}{(\LThreeName13)}$
\end{tabular}
\end{table}

\paragraph{Strong Kleene (\SKName{}).}

In his  book~\cite{Kl50},
the prominent mathematician 
S.C.~Kleene introduced two
three-valued logics that nowadays bear his name.
The first  we consider
($\SKName$) is defined by the same truth tables as \LThreeName{} 
as far as the connectives $\land, \lor$ and $\neg$
are concerned, whereas it differs in the implication  (usually not included in the primitive language), which may be given by
$\FmA \to \FmB : = \neg \FmA \lor \FmB$.
Thus, formally, $\SKName$ is
easily shown to coincide with the $\{ \land, \lor, \neg \}$-fragment of \LThreeName{}. The meaning
 Kleene had in mind for the third value is, however, different from the \L ukasiewicz case, for $\uv$  is now interpreted as \emph{undefined} (rather than \emph{possible}, as in \LThreeName{}). Such an interpretation can be justified within the theory of  partial recursive functions, to which Kleene in fact gave  major contributions. 
 
 The notion of partial function generalizes that of ordinary function in that the former may be undefined for some arguments, that is, we admit   that no output may be produced
for certain inputs. Thus, for a given function $f$ on integers and a given input $x$, we have three cases: either $f(x)= 5$ or 
$f(x) \neq 5$ or $f(x)$ is undefined; within $\SKName$,
a proposition corresponding to the latter situation would be labelled by the  truth value $\uv$. Notice also that the truth tables of $\SKName$ reflect the following observation: even if the value of some atomic proposition (say $p$) is undefined, we may still be able to assign a classical value to a compound proposition 
(say $p \lor q$) in which $p$ appears, for it suffices to know that $q$ is true. This reasoning also makes perfect sense from a computational point of view: a program that computes the value of $p \lor q$ may output 
the value $\tv$ as soon as it successfully manages to establish that (e.g.) $q$ has value $\tv$, even if the computation involving $p$ has not terminated yet. 

A convenient way to axiomatize
 $\SKName$ (as well as $\LPName$ considered further on) is to start from a common weakening of both logics, namely four-valued Belnap-Dunn logic, \textbf{B}. The latter (on which we shall say more in a little while) is axiomatized by the following \SetFmla{} H-calculus
 introduced in \cite{Pyn95} (an almost identical calculus was independently introduced in \cite{Fon97}):

% REFERENCES
% - Introduction to Metamathematics seems a good reference here
%The following is the \SetFmla{} H-system for Four-Valued Belnap-Dunn logic \textbf{B} introduced in \cite{Pyn95} (an almost identical calculus was independently introduced in \cite{Fon97}):

\begin{table}[H]
\centering
\setlength\tabcolsep{1.5pt}
\begin{tabular}{lll}
\AXC{$\FmA \aand \FmB$}
\LL{($\SKName1$)}
\UIC{$\FmA$}
\DP 
 & 
\AXC{$\FmA \aand \FmB$}
\LL{($\SKName2$)}
\UIC{$\FmB \aand \FmA$}
\DP
 &
\AXC{$\FmA$}
\AXC{$\FmB$}
\LL{($\SKName3$)}
\BIC{$\FmA \aand \FmB$}
\DP
 \\

 & & \\
 
\AXC{$\FmA$}
\LL{($\SKName4$)}
\UIC{$\FmA \aor \FmB$}
\DP 
 & 
\AXC{$\FmA \aor \FmB$}
\LL{($\SKName5$)}
\UIC{$\FmB \aor \FmA$}
\DP
 & 
\AXC{$\FmA \aor (\FmB \aor \FmC)$}
\LL{($\SKName6$)}
\UIC{$(\FmA \aor \FmB) \aor \FmC$}
\DP
 \\
 
 & & \\

\AXC{$\FmA \aor (\FmB {\aand} \FmC)$}
\LL{($\SKName7$)}
\UIC{$(\FmA \aor \FmB) \aand (\FmA \aor \FmC)$}
\DP
 &
\AXC{$(\FmA \aor \FmB) \aand (\FmA \aor \FmC)$}
\LL{($\SKName8$)}
\UIC{$\FmA \aor (\FmB {\aand} \FmC)$}
\DP
 & 
\AXC{$\FmA \aor \FmA$}
\LL{($\SKName9$)}
\UIC{$\FmA$}
\DP
\\

 & & \\
 
\AXC{$\FmA \aor \FmB$}
\LL{($\SKName10$)}
\UIC{$\aneg\aneg \FmA \aor \FmB$}
\DP
 & 
\AXC{$\aneg\aneg \FmA \aor \FmB$}
\LL{($\SKName11$)}
\UIC{$\FmA \aor \FmB$}
\DP
 &
\AXC{$(\aneg \FmA \aand \aneg \FmB) \aor \FmC$}
\LL{($\SKName12$)}
\UIC{$\aneg (\FmA \aor \FmB) \aor \FmC$}
\DP
 \\

 & & \\
 
\AXC{$\aneg (\FmA \aor \FmB) \aor \FmC$}
\LL{($\SKName13$)}
\UIC{$(\aneg \FmA \aand \aneg \FmB) \aor \FmC$}
\DP
 & 
\AXC{$(\aneg \FmA \aor \neg \FmB) \aor \FmC$}
\LL{($\SKName14$)}
\UIC{$\aneg (\FmA \aand \FmB) \aor \FmC$}
\DP
 & 
\AXC{$\aneg (\FmA \aand \FmB) \aor \FmC$}
\LL{($\SKName15$)}
\UIC{$(\aneg \FmA \aor \neg \FmB) \aor \FmC$}
\DP
\\
\end{tabular}
\end{table}

\noindent 
As shown in~\cite{AlbPreRiv17}, strong Kleene logic may be obtained by adding to the preceding calculus the following rule: 

\begin{table}[H]
\begin{tabular}{lll}
\centering
\setlength\tabcolsep{1.5pt}
\AXC{$(\FmA \aand \aneg \FmA) \aor \FmB$}
\LL{($\SKName16$)}
\UIC{$\FmB$}
\DP&&
\end{tabular}
\end{table}

\noindent Notice that $\SKName$ does not have theorems, so it cannot be presented via a calculus with closed assumptions (i.e.~axioms). 

\paragraph{Logic of Paradox (\LPName{}).}
% REFERENCES
% % - On LP, The Logic of Paradox, by Priest,
% and work by Florencio González Asenjo in 1966, which
% I could not find

 This logic
was introduced 
(under the name \emph{calculus of antinomies}) in the 1960s by F.~G.~Asenjo~\cite{asenjo1966},
%Notre Dame Journal of Formal Logic Volume VII, Number 1, January 1966 A CALCULUS OF ANTINOMIES
but was studied in greater depth and made popular a decade later by G.~Priest, who also proposed the name we are adopting here: \emph{Logic of Paradox}.
The intention of both authors was to develop a logical framework that might allow one to handle logical paradoxes 
(also known as \emph{antinomies}) without trivializing the whole system, as happens in the classical setting. 
Antinomies are propositions that, like the famous Russell's paradox, appear to be \emph{both true and false} at the same time. From the point of view of a particular logic,
we can say that an antinomy is a proposition $p$ such that both $p$ and $\neg p$ assume a designated value. The logic devised by Asenjo-Priest is therefore 
(a most prominent example of) a
\emph{paraconsistent logic}, 
i.e.~one that rejects the following \emph{ex contradictione quodlibet} rule:

\begin{table}[H]
\begin{tabular}{lll}
\centering
\setlength\tabcolsep{1.5pt}
\AXC{$\FmA \aand \aneg \FmA$}
\LL{($\mathbf{ECQ}$)}
\UIC{$\FmB$}
\DP&&
\end{tabular}
\end{table}
Semantically, achieving such a behavior is easy: one may just adopt the
same truth tables as Kleene's 
\SKName{} while having $\uv$
designated  together with
$\tv$. This simple strategy, while necessarily renouncing some derivations that hold classically, allows \LPName{} to actually
retain all the tautologies of 
classical logic (see below).

As mentioned earlier, the Belnap-Dunn logic $\BelnapDunnName$ is, formally, a  weakening common to  
\LPName{} and \SKName{}. 
It may be interesting to mention that,
in fact, the four-valued matrix which defines $\BelnapDunnName$ is  obtained by simultaneously enlarging 
the set $\{ \bv, \tv \}$ with 
two non-classical truth values: the former (usually called \textbf{n}, for \emph{neither} true nor false)  behaves as $\uv$ does in \SKName{}
(\textbf{n} is a fixpoint of the negation operator, but is not designated) and the latter 
(usually called \textbf{b}, for \emph{both} true and false) 
 behaves as $\uv$ does in \LPName{}
(\textbf{b} is also a fixpoint of the negation operator but it is designated).

As shown in~\cite[Thm.~3.4]{AlbPreRiv17},  \LPName{} can be axiomatized through the \SetFmla{} H-calculus extending $\BelnapDunnName$ by the \emph{exluded middle} axiom:

\begin{table}[H]
    \begin{tabular}{l}
         $\SetFmlaHRule{}{\FmA \lor \neg\FmA}{(\LPName1)}$
    \end{tabular}
\end{table}

\noindent As mentioned earlier, the tautologies of \LPName{} are precisely the ones of classical logic \cite[Thm.~III.8,~p.~228]{Priest}, so classical logic and \LPName{} only differ as far as proper inferences (that is, those with nonempty sets of premises) are concerned.

%\textcolor{red}{G. @Umberto: can you add the reference of Priest and cite the specific theorem too? [UM: it is the paper called Logic of Paradox, I've added the precise reference above.]
%LP has theorems and EM is an example. Does this logic have the deduction theorem?
%[UM: No it does not I replied already elsewhere...was my reply deleted? LP does not satisfy modus ponens (for the implication given by $\neg p  \lor q $ but I think that the result from 'Ideal Paraconsistent Logics' entails that LP does not have the deduction theorem for any possible definable connective--@Sérgio may be able to see/write this more quickly]
%[SM:I confirmed! they call a proper implication a binary connective satisfying the deduction theorem, in Proposition 8. they show that a proper implication is not definable in Priest’s three-valued logic LP]
%@Umberto: can you make a short comparison between LP$^{\ararr}$ and LP? What is the difference if any? ...just the fact that LP does not have $\ararr$ as primitive? For instance, is the implication $A \ararr B$ of LP$^{\ararr}$ definable as $\aneg A \aor B$? 
%[UM: In the  paper by Avron et al. (Ideal paraconsistent logics)
%it is shown that LP and LP$^{\ararr}$ do not coincide: this follows from the fact that $\ararr$ is a 'proper implication'
%Example 6 (p.~22) and LP does not have any 'proper implication' (Proposition 8, p.~13)]}

\paragraph{Logic of Paradox with implication ($\LPImpName{}$).} 
% REFERENCES
% - The mentioned paper by Avron, for LP with implication
%I'd add at least this ref: 20. Batens, D.: Paraconsistent extensional propositional logics. Logique et Analyse 90–91, 195–234 (1980)
%Also the notation comes from the paper by Middelburg, A Survey of Paraconsistent Logics, which we may want to cite.

As in
Kleene's 
\SKName{}, one can define within \LPName{} an implication connective
by the term 
$\neg \FmA \lor \FmB$. Such an implication, however,
will not enjoy the Deduction Theorem,  and indeed it will not even satisfy \emph{modus ponens} (adding the latter rule to \LPName{} is already sufficient to regain classical logic). A stronger result is shown in~\cite[Prop.~8]{arieli2011}:
% O. Arieli A.Avron Ideal Paraconsistent Logics A. Zamansky
 no  connective definable in \LPName{} 
can possibly enjoy the Deduction Theorem. It is, however,
possible to expand  \LPName{}
with a new implication that enjoys
the Deduction Theorem and still does not force the resulting logic to become classical. This logic,
which we shall denote by 
\LPImpName{},
seems to have been first considered in~\cite{batens1980}.
%Batens, D.: Paraconsistent extensional propositional logics. Logique et Analyse 90–91, 195–234 (1980)
A \SetFmla{} H-calculus for  
\LPImpName{} may be obtained as follows.
Let us start from the system
HB$_e$ from~\cite{Avr91}:

%(Ideal paraconsistent logics)
%it is shown that LP and LP$^{\ararr}$ do not coincide: this follows from the fact that $\ararr$ is a 'proper implication'
%Example 6 (p.~22) and LP does not have any 'proper implication' (Proposition 8, p.~13)]

%The following is the basic system HB$_e$ taken from~\cite{Avr91}:

\begin{table}[H]
\begin{tabular}{l}
$\SetFmlaHRule{}{\FmA \ararr (\FmB \ararr \FmA)}{(\LPImpName1)}$\\[0.3cm]
$\SetFmlaHRule{}{(\FmA \ararr (\FmB \ararr \FmC)) \ararr ((\FmA \ararr \FmB) \ararr (\FmA\ararr \FmC))}{(\LPImpName2)}$\\[0.3cm]
$\SetFmlaHRule{}{((\FmA \ararr \FmB) \ararr \FmA) \ararr \FmA}{(\LPImpName3)}$\\[0.3cm]
$\SetFmlaHRule{}{(\FmA \aand \FmB) \ararr \FmA}{(\LPImpName4)}$\\[0.3cm]
$\SetFmlaHRule{}{(\FmA \aand \FmB) \ararr \FmB}{(\LPImpName5)}$\\[0.3cm]
$\SetFmlaHRule{}{\FmA \ararr (\FmB \ararr (\FmA \aand \FmB))}{(\LPImpName6)}$\\[0.3cm]
$\SetFmlaHRule{}{\FmA \ararr (\FmA \aor \FmB)}{(\LPImpName7)}$\\[0.3cm]
$\SetFmlaHRule{}{\FmB \ararr (\FmA \aor \FmB)}{(\LPImpName8)}$\\[0.3cm]
$\SetFmlaHRule{}{(\FmA \ararr \FmC) \ararr ((\FmB \ararr \FmC) \ararr ((\FmA \aor \FmB) \ararr \FmC))}{(\LPImpName9)}$\\[0.3cm]
$\SetFmlaHRule{}{\aneg (\FmA \aor \FmB) \alrarr \aneg \FmA \aand \aneg \FmB}{(\LPImpName10)}$\\[0.3cm]
$\SetFmlaHRule{}{\aneg (\FmA \aand \FmB) \alrarr \aneg \FmA \aor \aneg \FmB}{(\LPImpName11)}$\\[0.3cm]
$\SetFmlaHRule{}{\aneg \aneg \FmA \alrarr \FmA}{(\LPImpName12)}$\\[0.3cm]
$\SetFmlaHRule{}{\aneg (\FmA \ararr \FmB) \alrarr \FmA \aand \aneg \FmB}{(\LPImpName13)}$\\[0.3cm]
$\SetFmlaHRule{\FmA \quad \FmA \ararr \FmB}{\FmB}{(\LPImpName14)}$
\end{tabular}
\end{table}

\noindent The logic \LPImpName{} can be obtained from HB$_e$ by adding either of the following axioms (see \cite[Sec.~4]{Avr91}, where \LPImpName{} is called \textbf{Pac}).

\begin{table}[H]
    \begin{tabular}{l}
        $\SetFmlaHRule{}{\FmA \lor \neg\FmA}{(\LPImpName15)}$ \\[0.3cm]
        $\SetFmlaHRule{}{(\FmB \to \FmA) \to ((\FmB \to \neg \FmA) \to \neg \FmB)}{(\LPImpName16)}$
    \end{tabular}
\end{table}

\paragraph{Paraconsistent Weak Kleene (\PWKName).}
% REFERENCES
% - Introduction to Metamathematics (Kleene)
% - On Paraconsistent Weak Kleene Logic: Axiomatisation and Algebraic Analysis, Stefano BonzioJosé Gil-FérezJosé Gil-FérezF. PaoliF. PaoliLuisa PeruzziLuisa Peruzzi: https://www.researchgate.net/publication/309147976_On_Paraconsistent_Weak_Kleene_Logic_Axiomatisation_and_Algebraic_Analysis

This logic seems to have been  considered already by S.~Halld\'en
in his 1949 monograph~\cite{hallden1949}, %Logic of nonsense
and two decades later by A.~Prior~\cite{prior1967}, %Prior Arthur (1967) Past, Present and Future, Oxford, Oxford University Press.
but has only recently been studied in depth (see e.g.~\cite{BonGilPaoPer17}, 
%Ciuni, Carrara...al the papers by Pra Baldi etc. 
and also~\cite{pailos2018} which
explores applications to the theory of truth).
%Theories of truth based on four-valued infectious logicsBruno Da R ́e1,2, Federico Pailos1,2, and Damian Szmuc1,2

The connectives and  truth tables of \PWKName{} coincide with those of \BKName{} (see the next subsection), and so does the interpretation of the middle value. Logical consequence, however, is defined in \PWKName{}
in terms of preservation of non-falsity rather than (as in \BKName{}) of preservation of truth. This  choice leads to designating $\{ \tv, \uv \}$, which entails (as we have seen with \LPName{}) that  \PWKName{} is indeed a paraconsistent logic. 
In fact the valid formulas of
\PWKName{} coincide with those of 
\LPName{} which, as observed earlier, are precisely the classical tautologies
(see e.g.~\cite[p.~257]{BonGilPaoPer17}).
%Stefano Bonzio Jose ́ Gil-Fe ́rez Francesco Paoli Luisa Peruzzi On Paraconsistent Weak Kleene Logic: Axiomatisation and Algebraic Analysis
However, 
\PWKName{}  and \LPName{} differ in their derivations. To see this,
let us recall that 
\PWKName{} can be axiomatized 
as follows.
Take any complete \SetFmla{} H-calculus for classical logic (having \emph{modus ponens} as its only non-nullary rule)
and, while keeping all the axioms,
replace \emph{modus ponens}
by the following restricted version (see~\cite[Prop.~4]{BonGilPaoPer17}):

\begin{table}[H]
\begin{tabular}{l}
$\SetFmlaHRule{\FmA \quad \FmA \ararr \FmB}{\FmB}{(\PWKName1)}$ \ \ provided  $\Props{\FmA} \subseteq \Props{\FmB}$. 
\end{tabular}
\end{table}

This is in keeping with the observation that 
\PWKName{} is the \emph{left variable inclusion companion}
of classical logic (in the sense of~\cite[pp.~49-50]{bonzio2021plonka}, to which we refer for the relevant definitions and further details); likewise,
the logic \BKName{} dealt with in the sequel is the right variable inclusion companion of classical logic.
%Logics of left variable inclusion and Płonka sums of matrices S. Bonzio, T. Moraschini & M. Pra Baldi

Using the preceding characterization,
one sees that \PWKName{}
validates, for instance, the inference
$\FmA \land \neg \FmA \vdash  \FmA  \land \FmB$
which is easily refutable 
in  \LPName{}. On the other hand,
\LPName{} satisfies all rules of 
the Belnap-Dunn logic \textbf{B},
%(thus, in particular, 
%the conjunction elimination rule  ($\SKName1$) shown earlier) 
while 
it is easy to see that \PWKName{} does not satisfy, for instance, the rule
($\SKName1$) shown earlier. Hence, \LPName{}
and \PWKName{} are, as consequence relations, mutually  incomparable.

%{\color{red}
%[UM: please double check the above and let me know if you see a problem with my computations]   S: computations seems fine, actually this is also visible directly via semantics...
%}

As shown in~\cite[Example~26]{bonzio2021plonka},
%Logics of left variable inclusion and P\l onka sums of matrices
%S. Bonzio, T. Moraschini & M. Pra Baldi
\PWKName{}
can also be axiomatized via 
a \SetFmla{} H-calculus having no variable restriction on the applicability of rules; this  system, however, involves countably many rule schemata. The situation is thus 
similar to the one of \BKName{},
and also in this case it is an open 
question whether \PWKName{} 
can be axiomatized by a finite H-calculus.
And just like in that case the divide between \SetFmla{} and \SetSet{} is visible,
after many decades we still do not know if these logics are finitely axiomatizable in \SetFmla{} but a finite \SetSet{} analytic axiomatization (in the sense explained in Section~\ref{sec:MultipleConclusionCalculi}) can easily be obtained,
see \cite{caleiro2020}.

%{\color{red}
%[UM: do we want to show here the calculus of~\cite[Example~26]{}?]}

%{\color{purple}
%Since in \cite{ISMVL2020} we included the axiomatization of \PWKName{} it would be easier if PWK would appear before BK ...
%DONE!}
 
\paragraph{Bochvar-Kleene ($\BKName$).}
% REFERENCES
% - Bochvar (1937): http://www.thatmarcusfamily.org/philosophy/Course_Websites/Readings/Bochvar%20-%20Three-Valued%20Logic.pdf
% - Kleene's 'Introduction to Metamathematics'
% - CONTAINMENT LOGICS: ALGEBRAIC COMPLETENESS AND AXIOMATIZATION by STEFANO BONZIO and  MICHELE PRA BALDI

This logic was introduced in 1937 by Bochvar~\cite{bochvar1981}, 
who had in mind 
potential 
applications to
the study of paradoxes, future contingent statements
and presuppositions (see e.g.~\cite{ferguson2014}
%A computational interpretation of conceptivism TM Ferguson 
for a more recent computational interpretation of \BKName{}).
The third value is intended to
represent nonsensical statements
(or corrupted data, in the  interpretation given by Kleene for his equivalent logic),
and has an \emph{infectious}
behaviour: any interaction 
of $\uv$ with the other
truth values delivers
 $\uv$ itself.

It is worth clarifying that 
Bochvar's original presentation of \BKName{} employs
two different propositional languages, an  \emph{external} 
and an \emph{internal} one. We shall here consider the logic  of the internal language (shown in Figure~\ref{fig:truth-tables2}),
which is functionally equivalent to Kleene's system of
`weak' connectives~\cite{Kl50}. In Bochvar's paper,
the external connectives belong
to a metalanguage employed to assert or deny propositions which are built using the internal language.
Thus, for instance, according to the truth table of the internal conjunction $\BKleeConj$,
the value of a proposition such as  $\FmA \land \FmB$ may be
$\tv$,  $\uv$ or $\bv$.
However,
the value of the metalinguistic affirmation of 
$\FmA \land \FmB$ (this is denoted $\vdash \FmA \land \FmB$ by Bochvar)
may only be classical,
and  it will be $\tv$ precisely when the value of $\FmA \land \FmB$ is $\tv$, and $\bv$ otherwise. The external conjunction of two propositions $\FmA$ and $\FmB$ is interpreted as the affirmation
``$\vdash \FmA$ and $\vdash \FmB$ '',
which follows the usual Boolean truth tables. The semantics of external disjunction, conjunction and implication is defined in a similar way.

Although we are here  exclusively concerned with the internal connectives,
the preceding explanation was necessary to highlight the following fact: while (finite) \SetFmla{} H-calculi for \BKName{} presented in the external language  (and even for $n$-valued generalizations of \BKName{}: see~\cite{grigolia2008}) are well known, %Grigolia in our folder
the first  \SetFmla{} H-calculus axiomatizing \BKName{} in the pure internal language is the one 
displayed below, which includes countably many rule schemata. 
This axiomatization was 
introduced in~\cite{bonzio2021}, in which
\BKName{} is studied from an algebraic
perspective as the right variable inclusion companion of classical logic (see also the chapter  by Paoli \& Pra Baldi~\cite{poaliprabaldisamebook} in this volume).
%{\color{purple} In \cite{ISMVL}}
%Yielding the following infinite %\SetFmla{} axiomatization:
\begin{table}[H]
    \centering
    \begin{tabular}{ll}
        \AXC{$\FmC \ast (\FmA \to \FmA)$}
        \LL{(\BKName1)}
        \UIC{$\FmA \to \FmA$}
        \DP
        &
        \AXC{$\FmC \ast (\FmA \to (\FmB \to \FmA))$}
        \LL{(\BKName2)}
        \UIC{$\FmA \to (\FmB \to \FmA)$}
        \DP\\[0.4cm]
        \multicolumn{2}{l}{
        \AXC{$\FmC \ast (\FmA \to (\FmB \to \FmD) \to (\FmA \to \FmB) \to (\FmA\to\FmD))$}
        \LL{(\BKName3)}
        \UIC{$\FmA \to (\FmB \to \FmD) \to (\FmA \to \FmB) \to (\FmA\to\FmD)$}
        \DP}\\[0.4cm]
        \AXC{$\FmC \ast ((\neg\FmA \to \neg\FmB) \to (\FmB\to\FmA))$}
        \LL{(\BKName4)}
        \UIC{$(\neg\FmA \to \neg\FmB) \to (\FmB\to\FmA)$}
        \DP
        &
        \AXC{$\FmA \ast \FmB, \FmA \to \FmB$}
        \LL{(\BKName5)}
        \UIC{$\FmB$}
        \DP\\[0.4cm]
        \AXC{$\FmA, \FmB$}
        \LL{(\BKName6)}
        \UIC{$\FmA \ast \FmB$}
        \DP
        &
        \AXC{$\FmA \ast \FmB$}
        \LL{(\BKName7)}
        \UIC{$\FmA$}
        \DP
        \\[0.4cm]
        \AXC{$\FmA, \neg\FmA$}
        \LL{(\BKName8)}
        \UIC{$\FmB$}
        \DP
        &
        \AXC{$\chi(\delta, \vec{z})$}
        \doubleLine\LL{(\BKName9)}
        \UIC{$\chi(\epsilon, \vec{z})$}
        \DP,
        for every $\delta \approx \epsilon \in \Theta$\\
    \end{tabular}
\end{table}
\noindent where $\FmA \ast \FmB \SymbDef \FmA \land (\FmA \lor \FmB)$ and $\Theta \SymbDef \{
            \FmA \ast \FmA \approx \FmA,
            \FmA \ast (\FmB \ast \FmC) \approx (\FmA \ast \FmB) \ast \FmC,
            \FmA \ast (\FmB \ast \FmC) \approx \FmA \ast (\FmC \ast \FmB),
            \conn(\FmA_1,\ldots,\FmA_k) \ast \FmB \approx
            \conn(\FmA_1 \ast \FmB, \ldots, \FmA_k \ast \FmB),
            \FmB \ast \conn(\FmA_1,\ldots, \FmA_k) \approx \conn(\FmB \ast \FmA_1, \ldots, \FmB\ast \FmA_k)
        \}$.

As with \PWKName{}, it is currently an open problem whether \BKName{} can or cannot be axiomatized by  
a finite \SetFmla{} H-calculus. 
%(the same applies to the logic \PWKName{} considered in the next Subsection). 
In fact,
 \BKName{}  and \PWKName{} are both examples that illustrate the gap that exists between the \SetFmla{} and the \SetSet{}
approach to the axiomatization of logics.

Using the \SetSet{} formalism, it is easy to produce a finite axiomatization for \BKName{}
as a dual of the calculus for
\PWKName{} presented in the preceding section obtained by reversing all rules and swapping all occurrences of $\land$ and $\lor$. This result can be established thanks to a construction introduced in~\cite{caleiro2020}, which allows one to update the matrix semantics
of a given logic (by adding infectious values), 
obtaining a semantics for its left and right variable inclusion companions so as to preserve finite-valuedness.
Under a certain expressiveness requirement on the resulting matrix, this construction can then be used (together with the results from~\cite{marcelino2019,marcelinowollic19}) to obtain finite analytic \SetSet{} axiomatizations for the variable inclusion companions of the logic under consideration.
In the case of \BKName{},
the
claimed result then follows from the  fact that \BKName{}
is the right variable inclusion companion of classical logic~\cite[Thm.~4, p.~258]{urquhart2001}. 
%(Urquhart, A.: Basic many-valued logic. In: Gabbay, D.M., Guenthner, F. (eds.) Handbook of Philo-sophical Logic, vol. 2, pp. 249–295. Springer, Berlin (2001)) in Thm.~4, p.~258. 

%{\color{purple}
%In \cite{ISMVL2020} a construction for updating the semantics (by adding infectious values) of a given logic in order to obtain an adequate semantics for their left and right inclusion/containment companions that preserves finite-valuedness. This construction is then used together \cite{synth2019,marcelinowollic19}to obtain finite analytical \SetSet{} axiomatization for the variable inclusions companions under a certain expressivity requirement on the resulting matrix. In particular the logic \BKName{} is covered by this strategy... Although the resulting axiomatization for \BKName{} is not presented explicitly in that paper it can be obtained from the axiomatization of thelogic presented next paragraph by inverting the rules and swapping $\lor$ by $\land$ and vice-versa... }
%[...] %{\color{purple} }

{\color{red}
%[Sergio/Vitor: expand on this comment here?] explain that it is a good example in which we cannot transform the simple finite axiomatization with mc rules (obtained by dualizing the one for PWK in \cite{ISMVL2020} and presented later on ) into a finite single conclusion one...even if it its signature contains connectives - negation, disjunction, implication - that usually could do the bridge...but the infectious element makes the crucial needed -classical- properties of these connectives to fail}

%{\color{blue}
%This logic was introduced in 1937 by Bochvar~\cite{bochvar},  who had in mind potential applications in the study of paradoxes, future contingent statements and presuppositions.
%\BKName{} is functionally equivalent to Kleene's system of
%`weak' connectives~\cite{metamath}.
%The third value is intended to represent nonsensical statements (or corrupted data, in the Kleene interpretation), and has an \emph{infectious} behaviour: any interaction with the other truth values delivers the value $\uv$. To the best of our knowledge, the only  
%(infinite, rather complicated) \SetFmla{} H-system for Bochvar-Kleene's logic is the  infinite  reproduced below.
% This axiomatization was  introduced in~\cite{prabaldi}, where \BKName{} is studied from an algebraic perspective as the so-called \emph{containment companion} of classical logic (see also the chapter  by Paoli \& Pra Baldi~\cite{prabaldi} in this volume).}
}

{\color{red}
%[UM: perhaps we should mention that BK is the right-variable inclusion companion of classical logic, just as PWK is the left-variable inclusion companion of classical logic. This was proved by Urquhart
%(Urquhart, A.: Basic many-valued logic. In: Gabbay, D.M., Guenthner, F. (eds.) Handbook of Philo-
%sophical Logic, vol. 2, pp. 249–295. Springer, Berlin (2001))
%in Thm.~4, p.~258. 
%UM: But I don't know whether Urquhart's result  gives
 %an axiomatization for BK with only (restricted) MP as a rule as happens with PWK...(Sergio? Vitor?).
 }

\paragraph{Sobociński ($\SThreeName$)}

This logic was introduced in 1952 by  B.~Sobociński~\cite{sobocinski1952}  
% - "Axiomatization of a partial system of three-valued calculus of propositions," The Journal of Computing Systems, vol. 1 (1952), pp. 23-55.
as an (admittedly oversimplified) example of a logic possessing a ``strict implication'' analogous to that of more widely-known (non-finitely valued) modal systems. The propositional language of \SThreeName{} consists just of the (\L ukasiewicz) negation
and the above-mentioned implication, which does not coincide with any of the implications of the other logics considered in the present chapter.
Given the mostly theoretical interest in  \SThreeName{},
Sobociński does not propose an interpretation for the truth values; from a formal point of view, however, \SThreeName{} 
is closely related to relevance logics,
and in particular to the one known as \textbf{R}-Mingle (see~\cite{dunn1970}). 
% Dunn, J. M., ;< Algebraic completeness results for R-Mingle and its extensions,,'' The Journal of Symbolic Logic, vol. 35 (1970), pp. 1-13.
In fact, as shown in~\cite{parks1972},
%Notre Dame Journal of Formal Logic Volume XIII, Number 2, April 1972 NDJF AM A NOTE ON R-MINGLE AND SOBOCINSKI'S THREE-VALUED LOGIC
the set of tautologies of
\SThreeName{} coincides with those of the implication-negation fragment of 
\textbf{R}-Mingle.

%Sobociński does not propose any interpretation for the three values, but observes that \SThreeName{} may be viewed as a toy example of a system having strict implication. More interestingly perhaps, is what R. Zane Parks parks proved, namely that  \SThreeName{} coincides with the  implication-negation fragment of the relevance logic  R-mingle (RM).

%[UM: If I get it right (please doublecheck), \SThreeName{} does NOT have conjunction and disjunction--so we should not have them on the table or on the matrices. The  conjunction and disjunction are used in~\cite{parks1972} to show the above-mentioned fact (\SThreeName{} coincides with the  implication-negation fragment of the relevance logic  R-mingle),
%but they do not belong to the logic \SThreeName{} and they do not appear anywhere else, including the papers by Avron.]
% V: Okay, this is correct!

The following \SetFmla{} Hilbert-style axiomatization 
of $\SThreeName$ was presented in~\cite{sobocinski1952}
and further explored in~\cite{parks1972}:
\begin{table}[H]
    \begin{tabular}{l}
        \AXC{}
        \LL{(\SThreeName1)}
        \UIC{$(\FmA \to \FmB) \to ((\FmB \to \FmC) \to (\FmA \to \FmC))$}
        \DP
        \\[0.3cm]
        \AXC{}
        \LL{(\SThreeName2)}
        \UIC{$\FmA \to (\FmA \to (\FmB \to \FmB))$}
        \DP
        \\[0.3cm]
        \AXC{}
        \LL{(\SThreeName3)}
        \UIC{$(\FmA \to (\FmA\to\FmB))\to(\FmA \to \FmB)$}
        \DP
        \\[0.3cm]
        \AXC{}
        \LL{(\SThreeName4)}
        \UIC{$\FmA \to (\FmB \to (\neg {\FmB} \to \FmA))$}
        \DP
        \\[0.3cm]
        \AXC{}
        \LL{(\SThreeName5)}
        \UIC{$(\neg \FmA \to \neg \FmB) \to (\FmB \to \FmA)$}
        \DP
        \\[0.3cm]
        \AXC{$\FmA \quad \FmA \to \FmB$}
        \LL{(\SThreeName6)}
        \UIC{$\FmB$}
        \DP
    \end{tabular}
\end{table}

%\textcolor{red}{G. I could not find the reference / axiomatization. @Vitor?}
%
%{\color{blue} [UM: The original paper seems to be this: 
% Sobociński, B., "Axiomatization of a partial system of three-valued calculus of propositions," The Journal of Computing Systems, vol. 1 (1952), pp. 23-55.
% The following article contains tables and a Hilbert-style axiomatization of Sobociński's logic: A NOTE ON R-MINGLE AND SOBOCINSKI'S THREE-VALUED LOGIC R. ZANE PARKS.
%]
%}

\paragraph{Da Costa and D'Ottaviano's (\JThreeName).}
% REFERENCES
% - https://www.dropbox.com/home/Three-valued%20logics/literature/3-valued%20Logics/J3?preview=A+paraconsistent+logic+-+J3+_+IML+D%27Ottaviano+_+RL+Epstein.pdf

This logic
was proposed in 1970 by N.~da Costa and I.M.L. D'Ottaviano~\cite{dottaviano1987} as a formal solution to a question originally posed by Stanis\l aw Ja\'skowski  (hence  the name \JThreeName{}), who is   one of the  earliest pioneers of paraconsistent logic (together with N.~da Costa himself).
As in the case of \LPName{}, the middle value is designated: this reflects the  observation that \JThreeName{}
is also paraconsistent, in that both a proposition and its negation may be assigned a designated value. The authors of~\cite{dottaviano1987} give two possible interpretations for propositions that are evaluated to the middle value: we can either regard them as \emph{true} (in which case we have to admit, with G.~Priest, that there may be ``true contradictions'') or as \emph{provisionally true}. In the latter case it is assumed that contradictions may exist within a theory at a certain stage of its development, but  that these  will be hopefully eliminated later on. 
This reasoning justifies the presence in \JThreeName{}
of a modal-like ``possibility'' connective $\Diamond$ and its truth table: the logic is optimistic in
that $\Diamond \uv = \tv$, that is to say, the proposition ``it is possible that $A$'' is true (has value $\tv$) when 
$p$  is provisionally true
($\uv$). 

Taking the conjunction, the negation and $\Diamond$
as primitive,
one defines within \JThreeName{} 
the remaining connectives 
as follows:
\begin{itemize}
\item[] $\FmA \aor \FmB \SymbDef \aneg (\aneg \FmA \aand \aneg \FmB)$ 
\item[] $\FmA \ararr \FmB \SymbDef \aneg \wdia \FmA \aor \FmB$ 
\item[] $\FmA \alrarr \FmB \SymbDef (\FmA \ararr \FmB) \aand (\FmB \ararr \FmA)$
\item[] $\circ \FmA \SymbDef \aneg (\wdia \FmA \aand \wdia \aneg \FmA)$ 
\end{itemize}
As our notation suggests, the above-defined connective $\circ$ used in our axiomatization is an example of what in paraconsistent logic  is known
as a \emph{consistency operator}:
this means that the value of $\circ \FmA$ is designated (in which case, it must be $\tv$) if and only if the value of $\FmA$ is classical (see e.g.~\cite{carnielli2000}). %Logic and Logical Philosophy Volume 8 (2000), 115–152 Walter A. Carnielli∗ João Marcos† Sandra de Amo‡ FORMAL INCONSISTENCY AND EVOLUTIONARY DATABASES
%[UM: According to Joao the above is the first logic (an LFI) where \circ is taken as primitive]

It is easy to verify that, formally, all the connectives of 
\JThreeName{} either coincide with those of \LThreeName{} (as in the case of $\land, \lor$ and $\neg$) or are definable through them, for 
$\Diamond$ is itself definable in \LThreeName{}
by 
$\Diamond \FmA : = \neg \FmA \to_{\LThreeName} \FmA$.
Conversely, the \L ukasiewicz implication (and hence all the connectives of \LThreeName{}) is definable in \JThreeName{}
by: 
$$
\FmA \to_{\LThreeName} \FmB := 
(\Diamond \neg \FmA \lor \FmB) 
\land 
(\neg \FmA \lor \Diamond \FmB).
$$
This is shown in the MSc thesis of  J.~Marcos~\cite[p.~58]{marcos1999},
who must also be credited for bringing this fact to our attention. 

We may therefore affirm that 
the matrices defining 
\LThreeName{} and \JThreeName{}
are equivalent up to propositional language;
however, the  different 
choices of designated elements
determine two distinct logics.
To see this, it suffices to observe that 
 \LThreeName{} (alone)
validates 
$\FmA \vdash \neg \Diamond \neg \FmA$
whereas $\FmA \lor \neg \FmA$  is a tautology (only) in \JThreeName{}
(see e.g.~\cite[Example~3.20]{fontaal2016}). %Font's AAL book

A \SetFmla{} H-calculus axiomatizing \JThreeName{} was introduced in~\cite{EpsDOt90}:

\begin{table}[H]
\begin{tabular}{l}
$\SetFmlaHRule{}{\FmA \ararr (\FmB \ararr \FmA)}{(\JThreeName1)}$\\[0.3cm]
$\SetFmlaHRule{}{(\FmA \ararr (\FmB \ararr \FmC)) \ararr ((\FmA \ararr \FmB) \ararr (\FmA\ararr \FmC))}{(\JThreeName2)}$\\[0.3cm]
$\SetFmlaHRule{}{((\FmA \ararr \FmB) \ararr \FmA) \ararr \FmA}{(\JThreeName3)}$\\[0.3cm]
$\SetFmlaHRule{}{\aneg \aneg {\FmA} \alrarr \FmA}{(\JThreeName4)}$\\[0.3cm]
$\SetFmlaHRule{}{\circ \FmA \alrarr \circ \aneg \FmA}{(\JThreeName5)}$\\[0.3cm]
$\SetFmlaHRule{}{\aneg \wdia \FmA \alrarr (\aneg \FmA \aand \circ \FmA)}{(\JThreeName6)}$\\[0.3cm]
$\SetFmlaHRule{}{\circ (\wdia \FmA)}{(\JThreeName7)}$\\[0.3cm]
$\SetFmlaHRule{}{((\aneg (\FmA \aand \FmB) \aand \circ (\FmA \aand \FmB)) \aand \FmB) \ararr (\aneg \FmA \aand \circ \FmA)}{(\JThreeName8)}$\\[0.3cm]
$\SetFmlaHRule{}{(\aneg \FmA \aand \circ \FmA) \ararr (\aneg (\FmA \aand \FmB) \aand \circ (\FmA \aand \FmB))}{(\JThreeName9)}$\\[0.3cm]
$\SetFmlaHRule{}{((\FmA \aand \FmB) \aand \circ (\FmA \aand \FmB)) \alrarr ((\FmA \aand \circ \FmA) \aand (\FmB \aand \circ \FmB))}{(\JThreeName10)}$\\[0.3cm]
$\SetFmlaHRule{\FmA \quad \FmA \ararr \FmB}{\FmB}{(\JThreeName11)}$
\end{tabular}
\end{table}

%\noindent 
%In this logic,
%the connectives $\aor, \ararr, \alrarr$ and $\circ$ may be defined in terms of $\neg,\land$ and $\Diamond$ as follows:

%\begin{itemize}
%\item[] $A \aor B \SymbDef \aneg (\aneg A \aand \aneg B)$ 
%\item[] $A \ararr B \SymbDef \aneg \wdia A \aor B$ 
%\item[] $A \alrarr B \SymbDef (A \ararr B) \aand (B \ararr A)$
%\item[] $\circ A \SymbDef \aneg %(\wdia A \aand \wdia \aneg A)$ 
%\end{itemize}

\paragraph{G\"odel (G3)}
% REFERENCES
%K. GODEL, Zum intuitionistischen Aussagenkalkii, Ergebnisse eines mathe- matischen Kolloquiums, Heft IV (for 1931-32, pub. 1933), p. 40.
% - (Dummet) https://www.dropbox.com/home/Three-valued%20logics/literature/3-valued%20Logics/G3?preview=A+propositional+calculus+with+denumerable+matrix+_+M+Dummet.pdf

Like \L ukasiewicz's,
G\"{o}del logics form an infinite family
of $n$-valued propositional systems
 which are all extensions of a (unique)
 infinite-valued 
 common generalization
 today known as \emph{G\"{o}del-Dummett logic}.
The origin of
these systems can indeed be traced back to K.~G\"{o}del~\cite{godel1932},
who introduced the corresponding  family of finite matrices as a means to prove that intuitionistic logic is not  finite-valued.
G\"{o}del-Dummett logic, the infinite-valued member of the family, was first considered
by M.~Dummett~\cite{dummet1959},
who showed that it can be obtained
as the axiomatic strengthening of intuitionistic logic 
by  the \emph{prelinearity}
axiom $(\GThreeName1)$ shown below. Each $n$-valued G\"{o}del logic can be axiomatized by further adding to G\"{o}del-Dummett logic a single characteristic formula, which in the case of 
the three-valued logic $\GThreeName$ is the axiom denoted
$(\GThreeName2)$ below (see e.g.~\cite[Thm.~6.9 and Lemma 6.10]{Ono19}).

\begin{table}[H]
\begin{tabular}{l}
$\SetFmlaHRule{}{(\FmA \ararr \FmB) \aor (\FmB \ararr \FmA)}{(\GThreeName1)}$\\[0.3cm]
%$\SetFmlaHRule{}{\FmA \aor (\FmA \ararr (\FmB\aor (\FmB \ararr \FmC)))}{(\GThreeName2)}$\\[0.3cm]
$\SetFmlaHRule{}{\FmA \aor (\FmA \ararr (\FmB\aor\neg\FmB))}{(\GThreeName2)}$
\end{tabular}
\end{table}
%{\color{blue}[]UM: 
%%I this this is correct. Also I think it can be replaced with the following formula which uses one less variable:
%Is there a reason not to use the axiom I mentioned instead (I prefer it because I find it simpler)?]
%$$
%A \aor (A \ararr (B \aor \neg B ))
%$$
%}
%V: Done!

The logics in the G\"{o}del family are  mostly regarded (and have been extensively studied) as \emph{fuzzy logics}, 
with intended application 
to reasoning with vague predicates (see e.g.~\cite{hajek98}). %Hájek, Metamathematics of fuzzy logics
Accordingly,  non-classical truth values such as $\uv$ are interpreted as 
degrees of truth intermediate
between $\bv$ (absolute falsity) and
$\tv$ (absolute truth).
As axiomatic extensions of the intuitionistic propositional calculus,
all 
G\"{o}del logics 
inherit the standard algebraic semantics of intuitionistic logic,
which is the class of Heyting algebras (aka relatively pseudo-complemented lattices).
$\GThreeName$ is no exception, and
its truth tables are determined by the algebraic counterparts of the propositional connectives: 
the conjunction and disjunction 
 (as in the case of \LThreeName{}, \SKName{} etc.) are $\min$ and $\max$,
 while the implication and negation correspond, respectively, to the relative pseudo-complement and the pseudo-complement of Heyting algebras.

%Propositional finite-valued G\"{o}del logics were introduced (implicitly) by G\"{o}del \cite{godel}
%to show that intuitionistic logic does not have a characteristic finite matrix. Later on, Dummett \cite{dummett}
%generalized the approach to an infinite set of truth values, and showed that this logic can be presented adding the so-called prelinearity axiom $(\GThreeName1)$ to a calculus for intuitionistic logic. To capture exactly 3-valued G\"odel logic we need a further axiom $(\GThreeName2)$ valid in chains of length at most 3 (see \cite[Thm.~6.9 and Lemma 6.10]{Ono19}. 

\paragraph{Post ($\PostName$)}

%\TODO{@Umberto}

The systems introduced by Emil Post  in  his seminal 1921 paper~\cite{post1921} are among the earliest
many-valued logics in the literature. The intended application (to problems 
of representability of functions) is purely mathematical, so the non-classical  values do not come with an intended semantical interpretation. Post uniformly defines an $n$-valued logic for each natural number $n$, and establishes that each  logic in this family is functionally complete and is axiomatized by a finite Hilbert-style system. Given our focus on three-valued logics, we shall here only consider
the case where $n=3$.
This logic shares the conjunction and the disjunction with \LThreeName{}, 
but has a \emph{cyclic} negation 
that we denote by $\postNeg$
(Figure~\ref{fig:truth-tables2}).
We will show in Example~\ref{ex:postaxiomat}
that this negation and thus the whole
logic has a simple finite (and analytic) \SetSet{} axiomatization.
%Post's logic has conjunction and disjunction of \L 3, plus the cyclic negation:

%\begin{table}[H]
%\centering
%\begin{tabular}{@{}c|c@{}}
%    \toprule
%     & \postNeg \\
%     \midrule
%     \bv & \tv\\
%     \uv & \bv\\
%     \tv & \uv\\
%     \bottomrule
%\end{tabular}\quad
%\end{table}

%\noindent %The novelty thus is just The only new rules are thus those for
%We only need to axiomatize the 
% cyclic negation, which can be achieved  through
% the following \SetSet{} rules:
%
%\[
%\MCRule{}{\neg\neg\PropA,\neg\PropA,\PropA}\RuleA^{1}_{\PostName}
%\qquad
%\MCRule{\neg\neg\PropA,\PropA}{}\RuleA^{2}_{\PostName}
%\qquad
%\MCRule{\neg\PropA,\PropA}{}\RuleA^{3}_{\PostName}
%\]

\paragraph{Logics $\MSevenName$ and $\MEightName$.}
These two systems were
constructed \emph{ad hoc}
to provide examples of
three-valued logics not axiomatizable by any 
finite \SetFmla{} Hilbert-style calculus~\cite[Lemma 3.3]{pala1994}.
%These are examples of three-valued logics that are not axiomatizable by any finite \SetFmla{} Hilbert-style calculus~\cite[Lemma 3.3]{pala1994}.} 
%Actually, a three-valued logic is
%non-finitely axiomatizable in \SetFmla{}
%if, and only if, 
%the corresponding three-valued matrix is
%isomorphic either
%to the matrix of 
%$\MSevenName$ or $\MEightName$~\cite[Lemma 3.3]{pala1994}.
We will
see, however, that there is a simple \SetSet{} analytic axiomatization for $\MSevenName$, obtainable
directly from the recipe we present
in Section~\ref{sec:MultipleConclusionCalculi}.
The logic $\MEightName$,  
which lacks
the expressiveness requirement for
the application of our recipe, is
nevertheless finitely axiomatizable in \SetSet{} by~\cite[Theorem 19.12]{shoesmithsmiley1978}.

%Let $\Sigma_\bullet$ be a signature containing
%a single binary
%\marginnote{\fns G: Unary? The given matrix presupposes that $\bullet$ is binary, no? Notice that the matrix proposed by Andrzej Wronski is different: $\bv \bullet \uv = \bv$ and $\tv$ everywhere else (see dropbox subfolder Three-valued logics $>$ literature $>$ Finite matrices not finitely axiomatizable: did you have this paper in mind?)... I found a reference: it's matrix VII on page 262 of Deductive systems and finite axiomatization properties - Thesis - KM Palasinka (see chapter 3 for a general result on non finitely axiomatizable three-valued logics). 
%%S: Binary indeed, unary is a typo. Regarding the matrix you are right! Clearly the mistake of calling this one the matrix of Wronski is a mistake that already spread to some papers :/, this one seems to be by Palasinka... 
%} 
%connective $\bullet$
%and consider the $\Sigma_\bullet$-matrix
%$\mathbb{P} \SymbDef \langle \mathbf{P}, \{\tv\} \rangle$,
%where $\mathbf{P}$ has universe $\{\bv,\uv,\tv\}$
%and interprets $\bullet$ as per the following table:

\section{The formalism of 3-labelled calculi}
\label{sec:3LabelledCalculi}
We begin by discussing what kinds
of objects are manipulated
by rules of inference in 3-labelled calculi.
A \emph{3-labelled formula} is an expression of the form `$\LabelValA\colon \FmA$', where $\LabelValA \in \{\bv, \uv, \tv\}$ is a \emph{label} and $\FmA$ is a formula.
The
\emph{3-labelled calculi} \cite{BaaFerZac98, avronzamanssamebook} 
constitute a family of proof systems whose rules of inference
%(from the broader class of labelled systems) whose rules
%are multiple conclusion sequent calculi 
%{\color{blue} S: in the intro we say they are Hilbert style calculi but the following Cut rules are sequent rules} \textcolor{red}{G. Ok: I added "sequent" here} 
manipulate finite sets of labelled formulas, usually referred to as \emph{3-sequents}. Henceforth, we will use the letters $\LabFmSetA, \LabFmSetB$ and $\LabFmSetC$ to denote 3-sequents and write $\LabFmSetA, \LabFmSetB$ to mean $\LabFmSetA \cup \LabFmSetB$.
We may eliminate the truth values from the notation of a 3-sequent $\LabFmSetA$ by writing it as a triple of sets of formulas
as follows:
\begin{equation*}
\FmSetA_{\bv} \mid \FmSetA_{\uv} \mid \FmSetA_{\tv},
\end{equation*}
where each formula in $\FmSetA_\LabelValA$ 
%(resp.~$X_2$ and $X_3$) 
is labelled with $\LabelValA$ in $\{\bv, \uv, \tv\}$. 
We may write
$\CtxLabAZero, \CtxLabAHalf$ and $\CtxLabAOne$
instead of $\FmSetA_{\bv}$,
$\FmSetA_{\uv}$ and $\FmSetA_{\tv}$, respectively, in case
there is no risk of confusion when writing rules
and derivations.
%(resp.~$\uv, \tv$). 
%We say that $\FmA$ occurs at place $i$ (or it occurs in the $i$-component) if $\varphi \in X_i$. For convenience, we may just write $X_{\bv}, X_{\uv}, X_{\tv}$ (resp.~$\varphi_{\bv}, \varphi_{\uv}, \varphi_{\tv}$) or omit labels relaying on the context.
The 3-labelled calculi of interest
in this work are those containing
the structural rules
of \emph{identity}, \emph{weakening}
and \emph{cut}, 
which do not depend on the connectives
present in the language,
as well as 
\emph{logical rules},
which vary according to the language
and describe the behaviour of the
connectives.
We describe these rules 
in more detail below.

\begin{paragraph}{Identity}
The only nullary (or \emph{axiomatic}) rule:

\begin{center}
\AXC{$\ $}
\RL{\fns Id$_{\FmA}$}
\UIC{$\FmA \mid \FmA \mid \FmA$}
\DP
\end{center}
\end{paragraph}
\begin{paragraph}{\textsc{Weakening}}
We may derive from any 3-sequent another
one by adding an arbitrary formula
in one of the three positions:
\begin{table}[H]
\centering
\begin{tabular}{ccc}
\AXC{$\CtxLabAZero \mid \CtxLabAHalf \mid \CtxLabAOne$}
\RL{\fns $W_\bv$}
\UIC{$\CtxLabAZero, \FmA \mid \CtxLabAHalf \mid \CtxLabAOne$}
\DP
&
\AXC{$\CtxLabAZero \mid \CtxLabAHalf \mid \CtxLabAOne$}
\RL{\fns $W_{\uv}$}
\UIC{$\CtxLabAZero\mid \CtxLabAHalf, \FmA  \mid \CtxLabAOne$}
\DP
 &
\AXC{$\CtxLabAZero \mid \CtxLabAHalf \mid \CtxLabAOne$}
\RL{\fns $W_{\tv}$}
\UIC{$\CtxLabAZero\mid \CtxLabAHalf  \mid \CtxLabAOne, \FmA$}
\DP
\end{tabular}
\end{table}
\end{paragraph}
\begin{paragraph}{Cut}
We may cut a formula whenever it appears
in different positions in two given 3-sequents:

\begin{table}[H]
    \centering
    \begin{tabular}{c}
        \AXC{$\CtxLabAZero, \FmA \mid \CtxLabAHalf \mid \CtxLabAOne$}
        \AXC{$\CtxLabAZero \mid \CtxLabAHalf, \FmA \mid \CtxLabAOne$}
        \RL{\fns Cut$_{\bv, \uv}$}
        \BIC{$\CtxLabAZero \mid \CtxLabAHalf \mid \CtxLabAOne$}
        \DP
        \\[0.4cm]
        \AXC{$\CtxLabAZero, \FmA \mid \CtxLabAHalf \mid \CtxLabAOne$}
        \AXC{$\CtxLabAZero \mid \CtxLabAHalf \mid \CtxLabAOne, \FmA$}
        \RL{\fns Cut$_{\bv, \tv}$}
        \BIC{$\CtxLabAZero \mid \CtxLabAHalf \mid \CtxLabAOne$}
        \DP
        \\[0.4cm]
        \AXC{$\CtxLabAZero \mid \CtxLabAHalf, \FmA \mid \CtxLabAOne$}
        \AXC{$\CtxLabAZero \mid \CtxLabAHalf \mid \CtxLabAOne, \FmA$}
        \RL{\fns Cut$_{\uv, \tv}$}
        \BIC{$\CtxLabAZero \mid \CtxLabAHalf \mid \CtxLabAOne$}
        \DP
    \end{tabular}
\end{table}
\end{paragraph}

\begin{paragraph}{Logical rules}
Let $\LabFmSetA_1,\ldots,\LabFmSetA_m$
be sets of 3-labelled formulas such that,
for each $\LabelValA' : \FmB \in \LabFmSetA_i$, we have $\FmB \in \{\FmA_1,\ldots,\FmA_k\} \subseteq \LangSetA$.
Also, let $\conn$ be a $k$-ary connective and $\LabelValA \in \ThreeValuesSet$.
Then the following is a logical rule:
%and $\LabelValA_1,\ldots,\LabelValA_k \in %\ThreeValuesSet$:

\begin{center}
\AXC{$(\CtxLabAZero \mid \CtxLabAHalf \mid \CtxLabAOne)[\LabFmSetA_1] \quad \cdots \quad (\CtxLabAZero \mid \CtxLabAHalf \mid \CtxLabAOne)[\LabFmSetA_m]$}
%\RL{$\conn:\LabelValA_1,\ldots,\LabelValA_k,\LabelValA$}
\UIC{$(\CtxLabAZero \mid \CtxLabAHalf \mid \CtxLabAOne)[\{\LabelValA : \conn(\FmA_1,\ldots,\FmA_k)\}]$}
\DP
\end{center}

\noindent
where the expression 
$(\CtxLabAZero \mid \CtxLabAHalf \mid \CtxLabAOne)[\LabFmSetB]$
refers to the 3-sequent
resulting from adding the 3-labelled
formulas in $\LabFmSetB$
to $\CtxLabAZero \mid \CtxLabAHalf \mid \CtxLabAOne$.
The 3-labelled formula
$\LabelValA : \conn(\FmA_1,\ldots,\FmA_k)$
is called \emph{the principal formula}
of the rule,
each $\FmA_i$, where $1 \leq i \leq k$,
is an \emph{auxiliary formula},
the sets
$\CtxLabAZero, \CtxLabAHalf$
and $\CtxLabAOne$
are the \emph{contexts}
of the rule, and their formulas are called \emph{side formulas}.
\end{paragraph}

A \emph{proof} of a 3-labelled sequent $\CtxLabAZero \mid \CtxLabAHalf \mid \CtxLabAOne$
in a 3-labelled calculus is, as one could expect from
the usual practice in sequent-style calculi,
a finite tree of 3-sequents rooted
in $\CtxLabAZero \mid \CtxLabAHalf \mid \CtxLabAOne$
built using the rules of inference of the calculus,
whose leaves result from an application of an instance of a
rule with an empty set of premises.
See Example~\ref{ex:derivationthreelabelled}
for a proof of a 3-sequent using a 3-labelled calculus
for the logic $\LThreeName$.
A 3-labelled calculus, differently
from what happens with Hilbert-style
systems, induce more than one
logic, depending on the logical
matrix we intend to capture. 
Usually the algebra is fixed, and
the different induced logics vary with the
choice of designated elements:

\begin{definition}
\label{def:crofthreelabelled}
Given a 3-labelled system 
$\LabSysNameA$
and a nonempty set $D \subseteq \ThreeValuesSet$,
we associate a consequence relation
$\SetSetCR_{\LabSysNameA}^{D}$
to $\LabSysNameA$ and $D$
such that
\begin{gather*}
     \FmSetA \SetSetCR_{\LabSysNameA}^{D}
     \FmSetB
     \text{ if, and only if, } \\
     \bigcup_{\LabelValA \in \ThreeValuesSet{\setminus}D } \{\LabelValA:\FmA \mid \FmA \in \FmSetA\}
     \,\cup\, \bigcup_{\LabelValA \in D}\{ \LabelValA:\FmB \mid \FmB \in \FmSetB\}
     \text{ is provable in $\LabSysNameA$}.
\end{gather*}
In the three-valued case, we will usually
deal with $D=\{\tv\}$
and $D=\{\uv,\tv\}$, for which cases
the above expression specializes to:
\begin{itemize}
    \item $\FmSetA \SetSetCR_{\LabSysNameA}^{\{\tv\}}
     \FmSetB \;
     \text{ if, and only if, } \;
     \FmSetA \mid \FmSetA \mid \FmSetB
     \text{ is provable in $\LabSysNameA$}$, and
     \item $\FmSetA \SetSetCR_{\LabSysNameA}^{\{\uv, \tv\}}
     \FmSetB \;
     \text{ if, and only if, } \;
     \FmSetA \mid \FmSetB \mid \FmSetB
     \text{ is provable in $\LabSysNameA$}$.
\end{itemize}
\end{definition}

Let $\MatA = \langle \AlgA, D \rangle$ be a three-valued logical matrix.
We say that a 3-labelled calculus
$\LabSysNameA$ \emph{axiomatizes}
$\MatA$ in case $\SetSetCR_{\MatA} \; = \; \SetSetCR_{\LabSysNameA}^{D}$.
An $\MatA$-valuation $\ValuationA$ \emph{satisfies}
a 3-sequent $\LabFmSetA$
in case $\ValuationA(\FmA) = \LabelValA$
for some 3-labelled formula $\LabelValA: \FmA \in \LabFmSetA$.
A 3-labelled rule
\emph{holds} in $\MatA$
if for no $\MatA$-valuation
$\ValuationA$
it is the case that the
3-sequent premises are satisfied
by $\ValuationA$
while the conclusion 3-sequent
is not satisfied by $\ValuationA$.
The next lemma shows that we may
use  3-labelled logical rules to give a proof-theoretical characterization
of each entry of the interpretations (truth tables) 
of the connectives in $\MatA$.
%
%In addition, $\MatA$ \emph{satisfies} a 3-sequent $\LabFmSetA$ in case
%every $\MatA$-valuation satisfies $\LabFmSetA$.

%$X \mid Y \mid Z$ if there is an $i$ ($1 \leq i \leq 3$) and a formula $\varphi \in X_i$ s.t.~$\varphi^M = v_i$ (where $v_i$ is the value corresponding to the $i$-component).

\begin{lemma}
A 3-labelled logical rule
of the form
\begin{center}
\AXC{$(\CtxLabAZero \mid \CtxLabAHalf \mid \CtxLabAOne)[\{\LabelValA_1 : \FmA_1\}] \quad \cdots \quad (\CtxLabAZero \mid \CtxLabAHalf \mid \CtxLabAOne)[\{\LabelValA_k : \FmA_k\}]$}
\RL{$\conn:\LabelValA_1,\ldots,\LabelValA_k,\LabelValA$}
\UIC{$(\CtxLabAZero \mid \CtxLabAHalf \mid \CtxLabAOne)[\{\LabelValA : \conn(\FmA_1,\ldots,\FmA_k)\}]$}
\DP
\end{center}
 holds in $\MatA := \langle \AlgA, D \rangle$
if, and only if,
$\AlgInterp{\conn}{\AlgA}(\LabelValA_1,\ldots,\LabelValA_k) = \LabelValA$.
\end{lemma}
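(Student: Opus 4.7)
The plan is to prove both implications by unwinding the definitions of \emph{rule holding} and \emph{valuation satisfying a 3-sequent}, taking advantage of the fact that a logical rule, as a schema, must hold for every choice of contexts and every substitution instance of its auxiliary formulas.

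For the direction from left to right, I would specialize the rule to its sharpest instance. Pick $k$ distinct propositional variables $\PropA_1, \ldots, \PropA_k$ and set $\FmA_i \SymbDef \PropA_i$; moreover, take the contexts to be empty, i.e.\ $\CtxLabAZero = \CtxLabAHalf = \CtxLabAOne = \EmptySet$. Since the $\PropA_i$ are free propositional variables, there is an $\MatA$-valuation $\ValuationA$ with $\ValuationA(\PropA_i) = \LabelValA_i$ for every $1 \leq i \leq k$. By construction this $\ValuationA$ satisfies each premise $\{\LabelValA_i : \PropA_i\}$. By hypothesis the rule holds, so $\ValuationA$ must satisfy the conclusion $\{\LabelValA : \conn(\PropA_1,\ldots,\PropA_k)\}$. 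With empty contexts, the only labelled formula available as a witness is $\LabelValA : \conn(\PropA_1,\ldots,\PropA_k)$ itself, hence $\ValuationA(\conn(\PropA_1,\ldots,\PropA_k)) = \LabelValA$. Because $\ValuationA$ is a homomorphism, $\AlgInterp{\conn}{\AlgA}(\LabelValA_1, \ldots, \LabelValA_k) = \LabelValA$.

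For the direction from right to left, I would argue by cases on a generic valuation. Assume $\AlgInterp{\conn}{\AlgA}(\LabelValA_1, \ldots, \LabelValA_k) = \LabelValA$. Fix arbitrary contexts $\CtxLabAZero, \CtxLabAHalf, \CtxLabAOne$, arbitrary formulas $\FmA_1, \ldots, \FmA_k$, and an $\MatA$-valuation $\ValuationA$ satisfying every premise. Either $\ValuationA$ already satisfies the base 3-sequent $\CtxLabAZero \mid \CtxLabAHalf \mid \CtxLabAOne$, in which case the witnessing labelled formula lies in a shared context and hence also witnesses satisfaction of the conclusion; or it does not, in which case each premise $(\CtxLabAZero \mid \CtxLabAHalf \mid \CtxLabAOne)[\{\LabelValA_i : \FmA_i\}]$ can only be satisfied through the added labelled formula, forcing $\ValuationA(\FmA_i) = \LabelValA_i$ for every $i$. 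Using again that $\ValuationA$ is a homomorphism together with the assumed truth-table entry, $\ValuationA(\conn(\FmA_1,\ldots,\FmA_k)) = \AlgInterp{\conn}{\AlgA}(\LabelValA_1, \ldots, \LabelValA_k) = \LabelValA$, so the additional labelled formula of the conclusion is satisfied.

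The argument is essentially a routine unpacking of definitions; there is no deep obstacle. The only point that requires a modicum of care is the forward direction, where one must justify the freedom to realize the tuple $(\LabelValA_1, \ldots, \LabelValA_k)$ by a single valuation without being obstructed by the contexts or by coincidences among the auxiliary formulas: this is handled cleanly by instantiating with fresh propositional variables and empty contexts, so that no spurious satisfaction of the base 3-sequent can mask the behaviour of $\conn$ on the tuple of interest.
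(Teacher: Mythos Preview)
The paper states this lemma without proof, treating it as a routine observation that motivates the generating subprocedure of Theorem~4.3. Your argument is correct and supplies exactly the unpacking of definitions the paper omits: the backward direction is the obvious case split on whether the context is already satisfied, and in the forward direction your choice of distinct propositional variables and empty contexts is precisely what is needed to force the valuation to expose the truth-table entry without interference.
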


It is natural, then, to expect that, by inspecting the entries of the truth tables of $\MatA$
and generating the corresponding rules
according to the previous result,
we should get a 3-labelled axiomatization
for $\MatA$. The next result matches
precisely
this expectation, and gives more:
the resulting calculus 
is \emph{cut-free}
and \emph{analytic}, that is, respectively,
every provable consecution has
a proof where cuts are not employed
and only subformulas of the 
proved statement occur in the proof.

\begin{theorem}[\cite{BaaFerZac98, avronzamanssamebook}]
\label{the:axiomatization_three_labelled}
Let $\MatA = \langle \AlgA, D \rangle$ be a three-valued matrix
whose determined logic we denote by $\LName$. Then the 3-labelled calculus $\ThreeLabName{\LName}$
whose logical rules are given by
\begin{center}
\AXC{$(\CtxLabAZero \mid \CtxLabAHalf \mid \CtxLabAOne)[\,\LabelValA_1 : \FmA_1\,] \quad \cdots \quad (\CtxLabAZero \mid \CtxLabAHalf \mid \CtxLabAOne)[\,\LabelValA_k : \FmA_k\,]$}
\RL{$\conn:\LabelValA_1,\ldots,\LabelValA_k$}
\UIC{$(\CtxLabAZero \mid \CtxLabAHalf \mid \CtxLabAOne)[\conn_\AlgA(\LabelValA_1,\ldots,\LabelValA_k) : \conn(\FmA_1,\ldots,\FmA_k)\,]$}
\DP,
\end{center}
for all $\LabelValA_1,\ldots,\LabelValA_k \in \ThreeValuesSet$, $\conn \in \Sigma_k$
and $k \in \omega$,
axiomatizes
$\MatA$, being, in addition, cut-free
and analytic.
\end{theorem}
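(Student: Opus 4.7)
\medskip

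\noindent\textbf{Proof proposal.} The plan is to establish three claims in sequence: soundness, analytic (and cut-free) completeness, and then to derive the remaining properties as corollaries. For soundness, I would show by a routine induction on the height of a proof in $\ThreeLabName{\LName}$ that every provable 3-sequent is satisfied by every $\MatA$-valuation. The identity, weakening and cut rules are easily checked to hold in $\MatA$, and the logical rules hold in $\MatA$ precisely by the preceding lemma, since each is generated from a true truth-table entry $\conn_\AlgA(\LabelValA_1,\ldots,\LabelValA_k) = \LabelValA$. Unpacking the definition of $\SetSetCR_{\LabSysNameA}^{D}$ then yields $\FmSetA \SetSetCR_{\LabSysNameA}^{D} \FmSetB \Rightarrow \FmSetA \SetSetCR_{\MatA} \FmSetB$.

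For the converse, I would prove the contrapositive in a stronger, analytic form: if a 3-sequent $\LabFmSetA$ is not provable using only labelled subformulas occurring in $\LabFmSetA$ and without using the cut rules, then there exists an $\MatA$-valuation $\ValuationA$ that refutes $\LabFmSetA$ (i.e.~$\ValuationA(\FmC) \neq \LabelValA$ for every $\LabelValA:\FmC \in \LabFmSetA$). Let $\AnalyticSetA \SymbDef \Subf{\FmSetA \cup \FmSetB}$ and consider the family of 3-sequents over $\AnalyticSetA$ extending $\LabFmSetA$ that are still not analytically provable; by a standard maximality argument (finite when $\AnalyticSetA$ is finite, by Zorn otherwise), pick a maximal such $\LabFmSetB$. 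Two elementary observations about $\LabFmSetB$ drive the construction of $\ValuationA$: first, by identity together with weakening, not all three of $\bv{:}\FmC, \uv{:}\FmC, \tv{:}\FmC$ can belong to $\LabFmSetB$ for any $\FmC \in \AnalyticSetA$; second, by maximality together with a single use of the appropriate cut rule, at most one label $\LabelValA$ for each $\FmC$ can fail to be in $\LabFmSetB$ (else cutting on $\FmC$ would yield a proof of $\LabFmSetB$). Thus there is a unique $\LabelValA$ with $\LabelValA:\FmC \notin \LabFmSetB$, and I set $\ValuationA(\FmC) \SymbDef \LabelValA$.

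The crux is then to verify that $\ValuationA$ is a homomorphism, i.e.~that $\ValuationA(\conn(\FmA_1,\ldots,\FmA_k)) = \conn_\AlgA(\ValuationA(\FmA_1),\ldots,\ValuationA(\FmA_k))$ for every compound subformula $\conn(\FmA_1,\ldots,\FmA_k) \in \AnalyticSetA$. Suppose for contradiction that $\ValuationA$ fails this equation at some $\FmC = \conn(\FmA_1,\ldots,\FmA_k)$, with $\ValuationA(\FmA_i) = \LabelValA_i$ and $\LabelValA \SymbDef \conn_\AlgA(\LabelValA_1,\ldots,\LabelValA_k) \neq \ValuationA(\FmC)$. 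By construction, $\LabelValA_i:\FmA_i \notin \LabFmSetB$ for each $i$ and $\LabelValA:\FmC \in \LabFmSetB$. By maximality of $\LabFmSetB$, each sequent $\LabFmSetB \cup \{\LabelValA_i:\FmA_i\}$ is analytically provable; applying the logical rule $\conn:\LabelValA_1,\ldots,\LabelValA_k$ generated in $\ThreeLabName{\LName}$ from precisely this truth-table entry yields an analytic proof of $\LabFmSetB \cup \{\LabelValA:\FmC\} = \LabFmSetB$, a contradiction. Since $\LabFmSetA \subseteq \LabFmSetB$, the valuation $\ValuationA$ (extended arbitrarily on variables outside $\AnalyticSetA$) refutes $\LabFmSetA$. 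Instantiating this at $\LabFmSetA = \bigcup_{\LabelValA \in \overline{D}}\{\LabelValA:\FmA \mid \FmA \in \FmSetA\} \cup \bigcup_{\LabelValA \in D}\{\LabelValA:\FmB \mid \FmB \in \FmSetB\}$ gives $\FmSetA \NSetSetCR_{\MatA} \FmSetB$, completing the completeness proof.

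Finally, cut-freeness and analyticity come for free: the argument just given shows that every 3-sequent which is provable at all (hence, by soundness, unrefutable) admits an analytic cut-free proof. The main obstacle I anticipate is the bookkeeping in the uniqueness-of-missing-label step and the clean handling of the subformula set in the possibly infinite case; the rest is essentially forced by the design of the logical rules, which were deliberately generated to mirror the truth tables entry by entry.
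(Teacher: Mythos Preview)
The paper does not supply its own proof of this theorem; it is quoted from \cite{BaaFerZac98} and the companion chapter \cite{avronzamanssamebook}. Your overall strategy---soundness by induction on derivations, completeness via a maximal unprovable extension, then semantic cut-elimination---is standard and essentially the right shape, but there is a genuine gap in the ``at most one missing label'' step.

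You take $\LabFmSetB$ maximal among 3-sequents over $\AnalyticSetA$ that are \emph{not analytically cut-free provable}, and then argue: if two labelled formulas $\LabelValA_1{:}\FmC$ and $\LabelValA_2{:}\FmC$ were both missing from $\LabFmSetB$, then by maximality $\LabFmSetB \cup \{\LabelValA_1{:}\FmC\}$ and $\LabFmSetB \cup \{\LabelValA_2{:}\FmC\}$ are each (cut-free) provable, and applying $\mathrm{Cut}_{\LabelValA_1,\LabelValA_2}$ yields a proof of $\LabFmSetB$. But that last proof \emph{uses} cut, so it does not contradict the cut-free unprovability of $\LabFmSetB$. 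Hence the uniqueness claim does not follow, and with it the definition of $\ValuationA$ on compound formulas collapses.

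The repair is small and, in fact, already implicit in your homomorphism paragraph. Do not attempt to define $\ValuationA$ on all of $\AnalyticSetA$ via the (unestablished) uniqueness. Instead, define $\ValuationA$ only on the propositional variables in $\AnalyticSetA$, choosing \emph{any} label $\LabelValA$ with $\LabelValA{:}\PropA \notin \LabFmSetB$ (existence is guaranteed by identity plus weakening, as you observed), extend $\ValuationA$ homomorphically, and then prove by induction on the structure of $\FmC \in \AnalyticSetA$ that $\ValuationA(\FmC){:}\FmC \notin \LabFmSetB$. The inductive step is exactly your final argument: if $\FmC = \conn(\FmA_1,\ldots,\FmA_k)$ and $\ValuationA(\FmC){:}\FmC \in \LabFmSetB$, then with $\LabelValA_i \SymbDef \ValuationA(\FmA_i)$ the induction hypothesis gives $\LabelValA_i{:}\FmA_i \notin \LabFmSetB$, so by maximality each $\LabFmSetB \cup \{\LabelValA_i{:}\FmA_i\}$ has a cut-free analytic proof; a single application of the logical rule $\conn:\LabelValA_1,\ldots,\LabelValA_k$ then yields a cut-free analytic proof of $\LabFmSetB \cup \{\ValuationA(\FmC){:}\FmC\} = \LabFmSetB$, contradicting the choice of $\LabFmSetB$. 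This version never invokes cut, so your concluding semantic cut-elimination step (provable $\Rightarrow$ unrefutable $\Rightarrow$ cut-free analytically provable) now goes through cleanly.
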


%\begin{remark}
%\label{notation:3LabelledRules}
In the following,
in order to emphasize that the rules above are in a one-to-one correspondence with matrix entries,
we will write the rule names as $\conn^{\LabelValA_1\LabelValA_2}_{\LabelValA}$ to identify the matrix entries for binary connectives $\conn$ (also for~unary connectives, in which case there will be no $\LabelValA_2$), where the indices $\LabelValA_1\LabelValA_2$ refers to the list of input values of the corresponding matrix entry (that govern the position of the auxiliary formulas in the premise of a rule), and $\LabelValA$ refers to the corresponding output value in the matrix (which governs the position of the principal formula in the conclusion of the rule). Moreover, we color the output value in rules' names to emphasize all rules sharing the same output per each connective.
%(this categorization will be relevant once we move to streamlined 3-valued calculi).
%\end{remark}

\begin{example}
\label{ex:lukrules}
We illustrate below the case of
$\ThreeLabName{\LThreeName}$,
beginning with $\neg$.
From the facts that
$\neg_{\LThreeName}(\tv) = \bv$,
$\neg_{\LThreeName}(\uv) = \uv$
and
$\neg_{\LThreeName}(\bv) = \tv$,
we obtain, respectively, the following 3-labelled rules:

\begin{table}[H]
\centering
    \begin{tabular}{ccc}
    \AXC{$\CtxLabAZero, \FmA \mid \CtxLabAHalf \mid \CtxLabAOne$}
    \RL{\fns $\aneg^\bv_\tvb$}
    \UIC{$\CtxLabAZero \mid \CtxLabAHalf \mid \CtxLabAOne, \aneg \FmA$}
    \DP
    &
    \AXC{$\CtxLabAZero \mid \CtxLabAHalf, \FmA \mid \CtxLabAOne$}
    \RL{\fns $\aneg^{\uv}_{\uvg}$}
    \UIC{$\CtxLabAZero \mid \CtxLabAHalf, \aneg \FmA \mid \CtxLabAOne$}
    \DP
    &
    \AXC{$\CtxLabAZero \mid \CtxLabAHalf \mid \CtxLabAOne, \FmA$}
    \RL{\fns $\aneg^\tv_\bvr$}
    \UIC{$\CtxLabAZero, \aneg \FmA \mid \CtxLabAHalf \mid \CtxLabAOne$}
    \DP
    \end{tabular}
\end{table}

\noindent For $\land$, we obtain:
\def\defaultHypSeparation{\,}
\begin{table}[H]
    \small
    \centering
    \begin{tabular}{ll}
        \AXC{$\CtxLabAZero, \FmA \mid \CtxLabAHalf \mid \CtxLabAOne$}
        \AXC{$\CtxLabAZero, \FmB \mid \CtxLabAHalf \mid \CtxLabAOne$}
        \RL{\fns $\aand^{\bv\bv}_\bvr$}
        \BIC{$\CtxLabAZero, \FmA \aand \FmB \mid \CtxLabAHalf \mid \CtxLabAOne$}
        \DP 
        &  
        \AXC{$\CtxLabAZero, \FmA \mid \CtxLabAHalf \mid \CtxLabAOne$}
        \AXC{$\CtxLabAZero \mid \CtxLabAHalf, \FmB \mid \CtxLabAOne$}
        \RL{\fns $\aand^{\bv\uv}_\bvr$}
        \BIC{$\CtxLabAZero, \FmA \aand \FmB \mid \CtxLabAHalf \mid \CtxLabAOne$}
        \DP
        \\[.4cm]
        \AXC{$\CtxLabAZero, \FmA \mid \CtxLabAHalf \mid \CtxLabAOne$}
        \AXC{$\CtxLabAZero \mid \CtxLabAHalf \mid \CtxLabAOne, \FmB$}
        \RL{\fns $\aand^{\bv\tv}_\bvr$}
        \BIC{$\CtxLabAZero, \FmA \aand \FmB \mid \CtxLabAHalf \mid \CtxLabAOne$}
        \DP
        &        
        \AXC{$\CtxLabAZero \mid \CtxLabAHalf \mid \CtxLabAOne, \FmA$}
        \AXC{$\CtxLabAZero, \FmB \mid \CtxLabAHalf \mid \CtxLabAOne$}
        \RL{\fns $\aand^{\tv\bv}_\bvr$}
        \BIC{$\CtxLabAZero, \FmA \aand \FmB \mid \CtxLabAHalf \mid \CtxLabAOne$}
        \DP\\[.4cm]
        \AXC{$\CtxLabAZero \mid \CtxLabAHalf \mid \CtxLabAOne, \FmA$}
        \AXC{$\CtxLabAZero \mid \CtxLabAHalf, \FmB \mid \CtxLabAOne$}
        \RL{\fns $\aand^{\tv\uv}_\uvg$}
        \BIC{$\CtxLabAZero \mid \CtxLabAHalf, \FmA \aand \FmB \mid \CtxLabAOne$}
        \DP
        & 
        \AXC{$\CtxLabAZero \mid \CtxLabAHalf \mid \CtxLabAOne, \FmA$}
        \AXC{$\CtxLabAZero \mid \CtxLabAHalf \mid \CtxLabAOne, \FmB$}
        \RL{\fns $\aand^{\tv\tv}_\tvb$}
        \BIC{$\CtxLabAZero \mid \CtxLabAHalf \mid \CtxLabAOne, \FmA \aand \FmB$}
        \DP\\[.4cm]
        \AXC{$\CtxLabAZero \mid \CtxLabAHalf, \FmA \mid \CtxLabAOne$}
\AXC{$\CtxLabAZero, \FmB \mid \CtxLabAHalf \mid \CtxLabAOne$}
\RL{\fns $\aand^{\uv\bv}_\bvr$}
\BIC{$\CtxLabAZero, \FmA \aand \FmB \mid \CtxLabAHalf \mid \CtxLabAOne$}
\DP
&
\AXC{$\CtxLabAZero \mid \CtxLabAHalf, \FmA \mid \CtxLabAOne$}
\AXC{$\CtxLabAZero \mid \CtxLabAHalf, \FmB \mid \CtxLabAOne$}
\RL{\fns $\aand^{\uv\uv}_\uvg$}
\BIC{$\CtxLabAZero \mid \CtxLabAHalf, \FmA \aand \FmB \mid \CtxLabAOne$}
\DP
\\[.4cm]
\AXC{$\CtxLabAZero \mid \CtxLabAHalf, \FmA \mid \CtxLabAOne$}
\AXC{$\CtxLabAZero \mid \CtxLabAHalf \mid \CtxLabAOne, \FmB$}
\RL{\fns $\aand^{\uv\tv}_\uvg$}
\BIC{$\CtxLabAZero \mid \CtxLabAHalf, \FmA \aand \FmB \mid \CtxLabAOne$}
\DP
&
    \end{tabular}
\end{table}

\noindent We leave to the reader the completion
of this calculus for $\LThreeName$
in terms of the rules for
$\lor$ and $\to$.

%\vspace{0.25cm}
%
%\begin{math}
%\hspace{-2cm}
%\AXC{$X \mid Y, A \mid Z$}
%\AXC{$X, B \mid Y \mid Z$}
%\RL{\fns $\aand^{\uv\bv}_\bvr$}
%\BIC{$X, A \aand B \mid Y \mid Z$}
%\DP
%\ \ \  
%\AXC{$X \mid Y, A \mid Z$}
%\AXC{$X \mid Y, B \mid Z$}
%\RL{\fns $\aand^{\uv\uv}_\uvg$}
%\BIC{$X \mid Y, A \aand B \mid Z$}
%\DP
%\ \ \ 
%\AXC{$X \mid Y, A \mid Z$}
%\AXC{$X \mid Y \mid Z, B$}
%\RL{\fns $\aand^{\uv\tv}_\uvg$}
%\BIC{$X \mid Y, A \aand B \mid Z$}
%\DP
%\end{math}
%
%\vspace{0.25cm}
%
%\begin{math}
%\hspace{-2cm}
%\AXC{$X \mid Y \mid Z, A$}
%\AXC{$X, B \mid Y \mid Z$}
%\RL{\fns $\aand^{\tv\bv}_\bvr$}
%\BIC{$X, A \aand B \mid Y \mid Z$}
%\DP
%\ \ \ 
%\AXC{$X \mid Y \mid Z, A$}
%\AXC{$X \mid Y, B \mid Z$}
%\RL{\fns $\aand^{\tv\uv}_\uvg$}
%\BIC{$X \mid Y, A \aand B \mid Z$}
%\DP
%\ \ \ 
%\AXC{$X \mid Y \mid Z, A$}
%\AXC{$X \mid Y \mid Z, B$}
%\RL{\fns $\aand^{\tv\tv}_\tvb$}
%\BIC{$X \mid Y \mid Z, A \aand B$}
%\DP
%\end{math}

\end{example}

The previous result
is clearly
a recipe to axiomatize
three-valued matrices
via 3-labelled calculi. We call it
the \emph{(3-labelled) generating subprocedure}.
Notice that each logical rule introduces the 3-labelled formula $\LabelValA: \conn(\FmA_1, \ldots, \FmA_n)$ in the conclusion, and the auxiliary formulas are immediate subformulas of the principal formula occurring exactly once, each of which in a different premise.
Nonetheless, providing an actual derivation is quite cumbersome. In the case of three-valued logics, we will have $9$ logical introduction rules for each connective, one for each entry of the concerned three-valued logical matrix. 

A second calculus may be obtained from the previous one via
what we we call here a \emph{streamlining sub-procedure} --- the result is called \emph{streamlined 3-labelled calculi}. 
In this case, we are able to get, for instance, a streamlined system with exactly three logical rules per each  $k$-ary connective $\conn$, one for each $\LabelValA \in \ThreeValuesSet$, whose effect in a derivation is the
%($\conn_\bv, \conn_\uv, \conn_\tv$) 
introduction of the 3-labelled formula $\LabelValA: \conn(\FmA_1, \ldots, \FmA_k)$. 
A streamlined calculus satisfying this property will be denoted here
by
$\ThreeLabStreamName{\LName}$, where $\LName$ is the concerned three-valued logic.
%in the $i$-component of the conclusion.
Again, the auxiliary formulas in the premise of logical rules are immediate subformulas of the principal formula, the number of premises is finite but not fixed, but, now, the very same auxiliary formula could occur more than once and in different sequent components. 
We provide below, for self-containment,
the steps to produce a streamlined calculus and point
to the references where the correctness of the simplification
steps is proved.

\begin{definition}
\label{def:3LabelledSreamliningProcedure}
The \emph{streamlined version} of a 3-labelled calculus is obtained applying the following streamlining subprocedure (see \cite{avronzamanssamebook} in the present book):
\begin{description}
\item[(P1)] If a rule $r$ is derivable from other rules without using cuts, then $r$ can be deleted.
\item[(P2)] If $r \SymbDef \frac{\LabFmSetA}{\LabFmSetB}$ is a rule with $\LabFmSetA$ a finite set of premises, $r' \SymbDef \frac{\LabFmSetA'}{\LabFmSetB}$ is sound, and $\LabFmSetA' \subset \LabFmSetA$, then $r$ can be replaced by $r'$.
\item[(P3)] Rules $r \SymbDef \frac{\LabFmSetA_1 \,\cdots\, \LabFmSetA_m}{\LabFmSetB}$ and $r' \SymbDef \frac{\LabFmSetA'_1 \cdots \LabFmSetA'_n}{\LabFmSetB}$ can be replaced by $r'' \SymbDef \frac{\{\LabFmSetA_i \cup \LabFmSetA'_j\}}{T}$ with $1\leq i \leq m$ and $1\leq j \leq n$, and vice-versa.
\end{description}
\end{definition}

Here we just recall the following key result for the correctness of the steps above refer the reader to \cite{avronzamanssamebook} in this book for the proof:

\begin{proposition}
If $\LabSysNameA'$ and $\LabSysNameA$ are 3-labelled calculi and $\LabSysNameA'$ is obtained from $\LabSysNameA$ applying the streamlining procedure in Definition~\ref{def:3LabelledSreamliningProcedure}, then: 
\begin{itemize}
\item $\LabSysNameA'$ and $\LabSysNameA$ are equivalent (one derives the rules of the other); 
\item if $\LabSysNameA$ is sound and complete for a matrix $\MatA$, then so is $\LabSysNameA'$; and
\item if $\LabSysNameA$ admits cut-elimination, then so does $\LabSysNameA'$.
\end{itemize}
\end{proposition}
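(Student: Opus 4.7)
The plan is to proceed by induction on the number of streamlining operations applied, reducing to the case where $\LabSysNameA'$ is obtained from $\LabSysNameA$ by a single application of P1, P2, or P3, and for each such single step verifying the three asserted properties separately.

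First I would treat equivalence (mutual derivability of rules). For P1, the deleted rule $r$ is by hypothesis derivable from the remaining rules of $\LabSysNameA$ without cut, so removing it cannot shrink the set of derivable 3-sequents, while trivially every derivation in $\LabSysNameA'$ remains a derivation in $\LabSysNameA$. For P2, in $\LabSysNameA$ the rule $r'$ (with smaller premise set $\LabFmSetA' \subset \LabFmSetA$) can be simulated by $r$ together with a derivation of the extra premises in $\LabFmSetA \setminus \LabFmSetA'$, which exist because $r'$ is sound and $\LabSysNameA$ is assumed to contain identity and weakening; conversely, $r$ is simulated by $r'$ in $\LabSysNameA'$ simply by restricting attention to the subset $\LabFmSetA' \subset \LabFmSetA$ of available premises. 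For P3, each of $r$ and $r'$ is derivable from $r''$ by weakening, passing from a proof of any $\LabFmSetA_i$ to a proof of $\LabFmSetA_i \cup \LabFmSetA'_j$ and then applying $r''$; conversely, $r''$ is derivable from $r$ and $r'$ through a combinatorial argument that uses the branching structure of the $m \times n$ premise union together with cuts on the labelled formulas that appear in both families, so that each branch reduces to an instance of $r$ or $r'$.

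Next I would derive the second bullet from the first. Soundness is preserved step by step: P1 only drops rules, P2 replaces $r$ by a rule $r'$ which is sound by hypothesis, and P3 produces a rule $r''$ whose soundness is immediate from that of $r$ and $r'$ (any valuation refuting $r''$ would refute either $r$ or $r'$). Completeness then transfers from $\LabSysNameA$ to $\LabSysNameA'$ via the equivalence established in the previous step, since the set of provable 3-sequents is unchanged.

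Finally, for cut-elimination I would argue that the rule shapes produced by P1, P2 and P3 remain compatible with the standard cut-reduction schema of 3-labelled calculi, so any cut-elimination strategy for $\LabSysNameA$ adapts to $\LabSysNameA'$: P1 only removes cases; P2 replaces a rule by one with fewer premises, which strictly simplifies the principal-formula reductions; and P3 collapses two logical rules into one whose reduction cases are obtained from those of $r$ and $r'$ by the same combination used for equivalence. The main obstacle will be precisely the cut-elimination verification for P2 and P3, because the principal-formula reductions in 3-labelled calculi depend on the exact shape of the logical rules: one has to show that when a cut meets an instance of $r'$ (or $r''$), the resulting derivation can still be reshaped into one of smaller cut-rank, using only derivations of the newly merged or reduced premise sets. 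Once this case analysis is carried out — and here the fact that auxiliary formulas remain immediate subformulas of principal formulas plays the key role — the induction closes and the three bullets follow together.
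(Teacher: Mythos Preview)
The paper does not actually prove this proposition: immediately before the statement it says ``we just recall the following key result \ldots\ refer the reader to \cite{avronzamanssamebook} in this book for the proof.'' So there is no in-paper argument to compare yours against; what follows is an assessment of your sketch on its own merits.

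The overall inductive structure is fine, but there is a real gap in the equivalence step for \textbf{(P2)}. You write that $r'$ is derivable in $\LabSysNameA$ by applying $r$ together with ``a derivation of the extra premises in $\LabFmSetA \setminus \LabFmSetA'$, which exist because $r'$ is sound and $\LabSysNameA$ is assumed to contain identity and weakening.'' This inference is invalid: soundness of $r'$ says only that every valuation satisfying each sequent in $\LabFmSetA'$ also satisfies $\LabFmSetB$; it says nothing about whether the discarded premises in $\LabFmSetA \setminus \LabFmSetA'$ are derivable from $\LabFmSetA'$, and identity plus weakening certainly do not manufacture them. The way one actually obtains derivability of $r'$ in $\LabSysNameA$ is by invoking \emph{completeness} of $\LabSysNameA$ with respect to $\MatA$ (a sound rule is then automatically derivable). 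In other words, the first bullet is not really independent of the hypothesis of the second bullet once (P2) is in play, contrary to how your induction is organised.

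A secondary issue concerns the hard direction of \textbf{(P3)}, where you invoke ``cuts on the labelled formulas that appear in both families.'' No cuts are needed: the logical rules of a 3-labelled calculus carry a context $\CtxLabAZero \mid \CtxLabAHalf \mid \CtxLabAOne$, so for each fixed $j$ you can apply $r$ with the active material of $\LabFmSetA'_j$ absorbed into the context, obtaining the conclusion sequent enlarged by that material; then apply $r'$ across all $j$ (with the principal formula now sitting in the context) to collapse back to $\LabFmSetB$. Once this context-absorption trick is in hand, the cut-elimination bullet also becomes simpler than you suggest: rather than reworking principal-formula reductions, one observes that every cut-free $\LabSysNameA$-proof can be rewritten as a cut-free $\LabSysNameA'$-proof because each deleted or replaced rule of $\LabSysNameA$ is itself cut-free derivable in $\LabSysNameA'$.
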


\begin{example}
The rules for $\land$ presented in Example~\ref{ex:lukrules}
can be simplified using the steps above,
resulting in the following rules:
\begin{table}[H]
    \centering
    \begin{tabular}{c}
        \AXC{$\CtxLabAZero, \FmA, \FmB \mid \CtxLabAHalf \mid \CtxLabAOne$}
        \RL{\fns $\aand_\bvr$}
        \UIC{$\CtxLabAZero, \FmA \aand \FmB \mid \CtxLabAHalf \mid \CtxLabAOne$}
        \DP
        \\[.4cm]
        \AXC{$\CtxLabAZero \mid \CtxLabAHalf, \FmA \mid \CtxLabAOne, \FmA \ \ \ \CtxLabAZero \mid \CtxLabAHalf, \FmA, \FmB \mid \CtxLabAOne \ \ \ 
        \CtxLabAZero \mid \CtxLabAHalf, \FmB \mid \CtxLabAOne, \FmB$}
        \RL{\fns $\aand_\uvg$}
        \UIC{$\CtxLabAZero \mid \CtxLabAHalf, \FmA \aand \FmB \mid \CtxLabAOne$}
        \DP
        \\[.4cm]
        \AXC{$\CtxLabAZero \mid \CtxLabAHalf \mid \CtxLabAOne, \FmA$}
        \AXC{$\CtxLabAZero \mid \CtxLabAHalf \mid \CtxLabAOne, \FmB$}
        \RL{\fns $\aand_\tvb$}
        \BIC{$\CtxLabAZero \mid \CtxLabAHalf \mid \CtxLabAOne, \FmA \aand \FmB$}
        \DP
    \end{tabular}
\end{table}
\noindent Similarly, we may obtain
one rule for each truth value introducing
each of the connectives $\lor$ and $\to$, resulting 
in the following rules:
\begin{table}[H]
    \centering
    \begin{tabular}{c}
        \AXC{$\CtxLabAZero, \FmA \mid \CtxLabAHalf \mid \CtxLabAOne$}
        \AXC{$\CtxLabAZero, \FmB \mid \CtxLabAHalf \mid \CtxLabAOne$}
        \RL{\fns $\aor_\bvr$}
        \BIC{$\CtxLabAZero, \FmA \aor \FmB \mid \CtxLabAHalf \mid \CtxLabAOne$}
        \DP
        \\[.4cm]
        \AXC{$\CtxLabAZero, \FmA \mid \CtxLabAHalf, \FmA \mid \CtxLabAOne \ \ \ \CtxLabAZero \mid \CtxLabAHalf, \FmA, \FmB \mid \CtxLabAOne \ \ \ 
        \CtxLabAZero, \FmB \mid \CtxLabAHalf, \FmB \mid \CtxLabAOne$}
        \RL{\fns $\aor_\uvg$}
        \UIC{$\CtxLabAZero \mid \CtxLabAHalf, \FmA \aor \FmB \mid \CtxLabAOne$}
        \DP
        \\[.4cm]
        \AXC{$\CtxLabAZero \mid \CtxLabAHalf \mid \CtxLabAOne, \FmA, \FmB$}
        \RL{\fns $\aor_\tvb$}
        \UIC{$\CtxLabAZero \mid \CtxLabAHalf \mid \CtxLabAOne, \FmA \aor \FmB$}
        \DP\\[.4cm]
        \AXC{$\CtxLabAZero \mid \CtxLabAHalf \mid \CtxLabAOne, \FmA$}
        \AXC{$\CtxLabAZero, \FmB \mid \CtxLabAHalf \mid \CtxLabAOne$}
        \RL{\fns $\ararr_\bvr$}
        \BIC{$\CtxLabAZero, \FmA \ararr \FmB \mid \CtxLabAHalf \mid \CtxLabAOne$}
        \DP
        \\[.4cm]
        \AXC{$\CtxLabAZero, \FmB \mid \CtxLabAHalf \mid \CtxLabAOne, \FmA$}
        \AXC{$\CtxLabAZero \mid \CtxLabAHalf, \FmA, \FmB \mid \CtxLabAOne$}
        \RL{\fns $\ararr_\uvg$}
        \BIC{$\CtxLabAZero \mid \CtxLabAHalf, \FmA \ararr \FmB \mid \CtxLabAOne$}
        \DP
        \\[.4cm]
        \AXC{$\CtxLabAZero, \FmA \mid \CtxLabAHalf, \FmA \mid \CtxLabAOne, \FmB$}
        \AXC{$\CtxLabAZero, \FmA \mid \CtxLabAHalf, \FmB \mid \CtxLabAOne, \FmB$}
        \RL{\fns $\ararr_\tvb$}
        \BIC{$\CtxLabAZero \mid \CtxLabAHalf \mid \CtxLabAOne, \FmA \ararr \FmB$}
        \DP
    \end{tabular}
\end{table}
\noindent Taking these rules together with
the rules for $\neg$ in Example~\ref{ex:lukrules}
gives us a 3-labelled calculus for $\LThreeName$ that we call
$\ThreeLabStreamName{\LThreeName}$.
\end{example}

\begin{example}
\label{ex:derivationthreelabelled}
Let us prove that $\FmA \to (\FmB \to \FmA)$ is a tautology (that is, receives a designated value under every valuation)
of $\LThreeName$. Since $\ThreeLabStreamName{\LThreeName}$
is complete for this logic, we just need to
derive the 3-sequent $\varnothing \mid \varnothing \mid \FmA \to (\FmB \to \FmA)$ in
$\ThreeLabStreamName{\LThreeName}$ (recall
Definition~\ref{def:crofthreelabelled}
and the fact that the set of designated values
of the logical matrix for $\LThreeName$ is $\{\tv\}$).
Notice that we use $W^\ast$ to mean 
a sequence of applications of weakening rules.
\begin{figure}[H]
    \centering
    \small
    \AxiomC{}
    \RL{\fns Id$_{\FmA}$}
    \UIC{$\FmA\mid\FmA\mid\FmA$}
    \RL{$W^\ast$}
    \UnaryInfC{$\FmA,\FmB \mid \FmA,\FmB \mid \FmA$}
    \AxiomC{}
    \RL{\fns Id$_{\FmA}$}
    \UIC{$\FmA\mid\FmA\mid\FmA$}
    \RL{$W$}
    \UnaryInfC{$\FmA,\FmB \mid \FmA \mid \FmA$}
    \RL{$\ararr_\tvb$}
    \BinaryInfC{$\FmA \mid \FmA \mid \FmB\to\FmA$}
    %
    %\AxiomC{$\FmB\mid\FmB\mid\FmB$}
    %\RL{$W^\ast$}
    %\UnaryInfC{$\FmA,\FmB \mid \FmB \mid \FmA,\FmB$}
    %\AxiomC{$\FmA\mid\FmA\mid\FmA$}
    %\RL{$W^\ast$}
    %\UnaryInfC{$\FmA,\FmB \mid \FmA \mid \FmB,\FmA$}
    %\RL{$\ararr_\tvb$}
    \AxiomC{$\mathcal{D}_1$}
    %\BinaryInfC{$\FmA \mid \varnothing \mid \FmB,\FmB\to\FmA$}
    %
    %\AxiomC{$\FmA\mid\FmA\mid\FmA$}
    %\UnaryInfC{$\FmA,\FmB \mid \FmB,\FmA \mid \FmA$}
    %\AxiomC{$\FmA\mid\FmA\mid\FmA$}
    %\UnaryInfC{$\FmA,\FmB \mid \FmB,\FmA \mid \FmA$}
    \AxiomC{$\mathcal{D}_2$}
    %\BinaryInfC{$\FmA \mid \FmB,\FmA \mid \FmB\to\FmA$}
    \RL{$\ararr_{\uvg}$}
    \BinaryInfC{$\FmA \mid \FmB\to\FmA \mid \FmB\to\FmA$}
    \RL{$\ararr_\tvb$}
    \BinaryInfC{$\varnothing \mid \varnothing \mid \FmA \to (\FmB \to \FmA)$}
    \DP
\end{figure}
\noindent where $\mathcal{D}_1$ and $\mathcal{D}_2$ are the following
subtrees:
\begin{table}[H]
    \scriptsize
    \begin{tabular}{cc}
    \centering
    \AxiomC{}
    \RL{\fns Id$_{\FmA}$}
    \UIC{$\FmA\mid\FmA\mid\FmA$}
    \RL{$W^\ast$}
    \UnaryInfC{$\FmA,\FmB \mid \FmB,\FmA \mid \FmA$}
    \AxiomC{}
    \RL{\fns Id$_{\FmA}$}
    \UIC{$\FmA\mid\FmA\mid\FmA$}
    \RL{$W^\ast$}
    \UnaryInfC{$\FmA,\FmB \mid \FmB,\FmA \mid \FmA$}
    \RL{$\ararr_\tvb$}
    \BinaryInfC{$\FmA \mid \FmB,\FmA \mid \FmB\to\FmA$}
    \DP
&
    \AxiomC{}
    \RL{\fns Id$_{\FmB}$}
    \UIC{$\FmB\mid\FmB\mid\FmB$}
    \RL{$W^\ast$}
    \UnaryInfC{$\FmA,\FmB \mid \FmB \mid \FmA,\FmB$}
    \AxiomC{}
    \RL{\fns Id$_{\FmA}$}
    \UIC{$\FmA\mid\FmA\mid\FmA$}
    \RL{$W^\ast$}
    \UnaryInfC{$\FmA,\FmB \mid \FmA \mid \FmB,\FmA$}
    \RL{$\ararr_\tvb$}
    \BinaryInfC{$\FmA \mid \varnothing \mid \FmB,\FmB\to\FmA$}
    \DP
    \end{tabular}
\end{table}

\end{example}

We should emphasize that different streamlined calculi may
be obtained depending on how and to which extent the simplification steps outlined above are applied.
The algorithm presented in Theorem~\ref{the:axiomatization_three_labelled} 
was implemented in the system MultiLog~\cite{baaz1993}, which
can be used by anyone for producing a streamlined
3-labelled calculus $\ThreeLabStreamName{\LName}$
--- that is, one with one rule per each
truth value and each connective ($3c$ rules, for $c$ connectives) ---
given a three-valued logical matrix for $\LName$ as input.

\section{Multiple-conclusion Hilbert-style calculi}
\label{sec:MultipleConclusionCalculi}

In this section, we are interested in providing finite and analytic multiple-conclusion (or \SetSet{}) 
Hilbert-style axiomatizations for three-valued logical matrices.
As we shall detail in a moment, this proof formalism generalizes
traditional Hilbert-style calculi in allowing sets of
formulas in the conclusion of the rules of inference.
%\textcolor{red}{"and in having derivations as rooted trees instead of simple
%sequences of formulas" ...G. ok... but the sentence in quotation marks pertains to traditional Hilbert-styles calculi too, so I would suggest to not put this as if this is something different}.
%\marginnote{\fns G: Notice that also in Hilbert-style calculi derivations are rooted trees strictly speaking... the fact that we usually write them as a sequence of formulas is just notation / for saving space... indeed, the justification of a rule application mentions number of lines and from this info you reconstruct the tree.
%S: we do not try to claim that single conclusion derivations MUST be represented by lines, we say that when seen as a particular case of the multiple conclusion apparatus (in the chosen representation) yields linear trees, note that everything would be a bit different if for example we used dag's instead of trees... 
%}
Putting aside the trivial cases, we will focus on
two tasks: first, on specifying a general method
for axiomatizing any three-valued matrix that satisfies a
criterion of sufficient expressiveness called \emph{monadicity}; and, second, 
on axiomatizing some specific three-valued matrices that
do not meet such criterion. 
The result proved in \cite[Theorem 19.12]{shoesmithsmiley1978}
%It is actually
guarantees that every finite logical matrix (in particular, three-valued matrices)
has an adequate (sound and complete) finite \SetSet{} axiomatization, %~\cite[Theorem 19.12]{ss},
but finding one for the non-monadic matrices
can be a rather 
%involving journey 
involved task, as we will see.
In~\cite{marcelino2019}, the recipe for the monadic case was generalized to deal with non-determinism and the resulting finite calculi were shown to satisfy a generalized subformula property, yielding what we call \emph{$\Theta$-analiticity}, a fact that had remained unnoticed for many decades.

We begin by giving the formal definitions related to the 
proof formalism of \SetSet{} calculi together with some examples.

\begin{definition}
\label{def:mcInferenceRule}
A \emph{\SetSet{} inference rule} $\RuleA$ is a pair $(\FmSetA,\FmSetB)\in\PowerSet{\LangSetA}\times\PowerSet{\LangSetA}$ that we denote by $\MCRule{\FmSetA}{\FmSetB}$, where
$\FmSetA$ is the \emph{antecedent} and
$\FmSetB$ is \emph{succedent} of $\RuleA$. 
If $\sigma$ is a substitution, then $\RuleA^\sigma \SymbDef \MCRule{\sigma(\FmSetA)}{\sigma(\FmSetB)}$
is said to be an \emph{instance} of $\RuleA$. 
A \emph{subrule} of
$\RuleA$
is a rule $\MCRule{\FmSetA^\prime}{\FmSetB^\prime}$ with
$\FmSetA^\prime \subseteq \FmSetA$
and $\FmSetB^\prime \subseteq \FmSetB$. %\textcolor{red}{A rule $\MCRule{\FmSetA^\prime}{\FmSetB^\prime}$ is a \emph{subrule} of  $\MCRule{\FmSetA}{\FmSetB}$ if
%$\FmSetA^\prime \subseteq \FmSetA$
%and $\FmSetB^\prime \subseteq \FmSetB$.
%, namely if r can be derived from s by means of the defining properties of a Hilbert- or Scott-style consequence relation (cfr.~Def.~\ref{def:SetFmla} and Def.~\ref{def:SetSet}).
%}
An inference rule is \emph{finitary} when 
its antecedent and succedent are finite.
A \emph{\SetSet{} calculus} $\CalcA$ is a collection
of \SetSet{} inference rules.
\end{definition}

\begin{example} 
Consider the calculus $\CalcA^{\to\neg}_{\SThreeName}$ consisting of the following rules of inference: 
\begin{gather*}
\MCRule{\PropA}{\neg\neg\PropA}\RuleA^{\SobocNeg}_{1} \quad
\MCRule{\neg\neg\PropA}{\PropA}\RuleA^{\SobocNeg}_{2} \quad
\MCRule{}{\PropA, \neg\PropA}\RuleA^{\SobocNeg}_{3}\\[.3em]
\MCRule{\neg\PropA, \PropB}{\PropA\to\PropB}\RuleA^{\SobocImp}_{1}
\quad
\MCRule{\PropA, \neg\PropB}{\neg(\PropA\to\PropB)}\RuleA^{\SobocImp}_{2}
\quad
\MCRule{\PropA\to\PropB, \neg\PropB}{\neg\PropA}\RuleA^{\SobocImp}_{3}
\\[.3em]
\MCRule{\neg(\PropA\to\PropB)}{\PropA}\RuleA^{\SobocImp}_{4}
\quad
\MCRule{\neg(\PropA\to\PropB)}{\neg\PropB}\RuleA^{\SobocImp}_{5}
\quad
\MCRule{\PropA,\PropA\to\PropB}{\PropB}\RuleA^{\SobocImp}_{6}
\end{gather*}
Notice that what distinguishes $\CalcA^{\to\neg}_{\SThreeName}$ from a standard Hilbert-style calculus
is the last rule, whose succedent is a two-formula set.
%with two formulas,
%stating the $\neg$-determinedness of %the underlying logic.
This calculus, according to the recipe we are about to present, is an axiomatization 
%(in a sense that will be made clear in the next definitions) 
(see Definition~\ref{def:axiomatizes})
of the $\{\to,\neg\}$-fragment of the
logic $\SThreeName$.
\end{example}

As mentioned in Section~\ref{sec:CaseStudies},
the logics $\BKName$ and $\PWKName$ are not known to be
finitely axiomatizable by means of a \SetFmla{} H-calculus.
The next couple of examples show that this matter is
much simpler in the \SetSet{} formalism, as these logics
fall in the scope of the axiomatization recipe
we will provide in a moment.

\begin{example} 
Consider the \SetSet{} calculus $\CalcA_{\BKName}$ consisting of the following rules of inference: 
%are the following:
\begin{gather*}
\MCRule{\PropA}{\neg\neg\PropA}\RuleA^{\BKleeNeg}_{1} \quad
\;
\MCRule{\neg\neg\PropA}{\PropA}\RuleA^{\BKleeNeg}_{2} \quad
\;
\MCRule{\PropA, \neg\PropA}{}\RuleA^{\BKleeNeg}_{3}\\
%%%%%%%%%%
\MCRule{\PropA,\PropB}{\PropA\land\PropB}\RuleA^{\BKleeConj}_{1}\quad
\MCRule{\PropA\land\PropB}{\PropA}\RuleA^{\BKleeConj}_{2}\quad
\MCRule{\PropA\land\PropB}{\PropB}\RuleA^{\BKleeConj}_{3}\quad
\MCRule{\neg\PropA,\neg\PropB}{\neg(\PropA\land\PropB)}\RuleA^{\BKleeConj}_{4}
\\
\MCRule{\neg(\PropA\land\PropB)}{\PropA,\neg\PropA}\RuleA^{\BKleeConj}_{5}\quad
\MCRule{\neg(\PropA\land\PropB)}{\PropB,\neg\PropB}\RuleA^{\BKleeConj}_{6}\quad
\MCRule{\PropA,\neg\PropB}{\neg(\PropA\land\PropB)}\RuleA^{\BKleeConj}_{7}\quad
\MCRule{\neg\PropA, \PropB}{\neg(\PropA\land\PropB)}\RuleA^{\BKleeConj}_{8}
\\
%%%%%%%%%%
\MCRule{\neg\PropA,\neg\PropB}{\PropA\to\PropB}\RuleA^{\BKleeImp}_{1}
\quad
\MCRule{\neg\PropA,\PropB}{\PropA\to\PropB}\RuleA^{\BKleeImp}_{2}
\quad
\MCRule{\PropA,\PropB}{\PropA\to\PropB}\RuleA^{\BKleeImp}_{3}
\quad
\MCRule{\PropA,\neg\PropB}{\neg(\PropA\to\PropB)}\RuleA^{\BKleeImp}_{4}\\
\MCRule{\PropA\to\PropB}{\neg\PropA,\PropA}\RuleA^{\BKleeImp}_{5}
\quad
\MCRule{\neg(\PropA\to\PropB)}{\neg\PropB}\RuleA^{\BKleeImp}_{6}
\quad
\MCRule{\PropA\to\PropB}{\neg\PropB,\PropB}\RuleA^{\BKleeImp}_{7}
\quad 
\MCRule{\neg(\PropA\to\PropB)}{\PropA}\RuleA^{\BKleeImp}_{8}
\quad
\MCRule{\PropA\to\PropB,\PropA}{\PropB}\RuleA^{\BKleeImp}_{9}\\
%%%%%%%%%%%%%
%\MCRule{\neg(\PropA\lor\PropB),\PropA}{}\RuleA^{\BKleeDisj}_{1}
%\quad 
%\MCRule{\neg(\PropA\lor\PropB),\PropB}{}\RuleA^{\BKleeDisj}_{2}
%\quad
\MCRule{\neg\PropA,\neg\PropB}{\neg(\PropA\lor\PropB)}\RuleA^{\BKleeDisj}_{1}
\quad
\MCRule{\neg(\PropA\lor\PropB)}{\neg\PropA}\RuleA^{\BKleeDisj}_{2}
\quad
\MCRule{\PropA\lor\PropB}{\neg\PropA,\PropA}\RuleA^{\BKleeDisj}_{3}
\quad
\MCRule{\neg(\PropA\lor\PropB)}{\neg\PropB}\RuleA^{\BKleeDisj}_{4}\\
\MCRule{\PropA\lor\PropB}{\neg\PropB,\PropB}\RuleA^{\BKleeDisj}_{5}
\quad
\MCRule{\neg\PropA,\PropB}{\PropA\lor\PropB}\RuleA^{\BKleeDisj}_{6}
\quad
\MCRule{\PropA,\neg\PropB}{\PropA\lor\PropB}\RuleA^{\BKleeDisj}_{7}
\quad
\MCRule{\PropA,\PropB}{\PropA\lor\PropB}\RuleA^{\BKleeDisj}_{8}
\quad
\MCRule{\PropA\lor\PropB}{\PropA,\PropB}\RuleA^{\BKleeDisj}_{9}
\end{gather*}
This calculus axiomatizes the
logic $\BKName$, according to the
procedure we describe in Subsection~\ref{Axiomatizing3valuedMatricesTheMonadicCase}. In rule $\RuleA^{\BKleeNeg}_{3}$,
%$\RuleA^{\BKleeDisj}_{1}$
%and $\RuleA^{\BKleeDisj}_{2}$
we observe that
\SetSet{} rules may have an empty succedent.
As we will see in the next definition, the
effect of an application of a rule with this
property is the discontinuation of a derivation branch.
We should emphasize that the empty succedent in a rule is not replaceable
by the version of the same rule having a single fresh propositional variable in the succedent
%instead of
%the empty set
(for instance, using the rule $\MCRule{\neg \PropA, \PropA}{\PropB}$ instead of $\MCRule{\neg \PropA, \PropA}{}$ in the present example).
Such modification would prevent the derivation
of valid statements with empty succedents,
leading thus to a different \SetSet{} logic.
%{\color{blue} the latter can equivalently be replaced by a variable not appearing in any of the formulas in the antecedent. [UM: I added this, is it correct? (Vitor, Sergio?)]}
%{\color{red}G. Can we expand on this last point?} {\color{purple}S: it is not correct. the rule with empty set in the conclusions is not equivalent to the one with the variable, it is stronger in general, to pass from multiple to single is more subtle and can only be done under certain conditions...}.
%\textcolor{red}{G. Ok... but what is/how do we interpret this empty succedent? is it bottom? can we add any formula after an empty succedent? }
%{\color{purple}S:that you can indeed, and the single conclusion of the multiple conclusion thus axiomatized would be the same (but the multiple cr changes, so it is not completely equivalent but it only changes the sign of the mcr)}
\end{example}

\begin{example}
Call $\CalcA_{\PWKName}$ the \SetSet{} calculus
given by:
\begin{itemize}
    \item all rules $\RuleA^{\BKleeNeg}_{i}$
     from the previous example turned upside down;
    \item all rules $\RuleA^{\BKleeConj}_{i}$ and
    $\RuleA^{\BKleeDisj}_{j}$ from the previous example
    turned upside down, with
    $\lor$ in place of $\land$ (and vice-versa);
    and
    \item the following rules governing $\to$:
    \begin{gather*}
        \MCRule{\neg\PropA}{\PropA\to\PropB}\RuleA^{\WKleeImp}_{1}
        \quad
        \MCRule{\PropB}{\PropA\to\PropB}\RuleA^{\WKleeImp}_{2}
        \quad
        \MCRule{}{\PropA,\PropA\to\PropB}\RuleA^{\WKleeImp}_{3}
        \quad
        \MCRule{\PropA,\neg\PropA}{\neg(\PropA\to\PropB)}\RuleA^{\WKleeImp}_{4}\\
        \MCRule{\PropB,\neg\PropB}{\neg(\PropA\to\PropB)}\RuleA^{\WKleeImp}_{5}
        \quad
        \MCRule{\PropA}{\neg(\PropA\to\PropB),\PropB}\RuleA^{\WKleeImp}_{6}
        \quad
        \MCRule{\neg(\PropA\to\PropB)}{\neg\PropA,\neg\PropB}\RuleA^{\WKleeImp}_{7}
        \quad
        \MCRule{\PropA\to\PropB}{\neg\PropA,\PropB}\RuleA^{\WKleeImp}_{8}\\
        \MCRule{\neg(\PropA\to\PropB)}{\PropA,\neg\PropB}\RuleA^{\WKleeImp}_{9}
        \quad
        \MCRule{\neg(\PropA\to\PropB)}{\PropA,\PropB}\RuleA^{\WKleeImp}_{10}
    \end{gather*}
\end{itemize}
\noindent By the recipe in Subsection~\ref{Axiomatizing3valuedMatricesTheMonadicCase},
this is a \SetSet{} axiomatization for $\PWKName$.
\end{example}

We now show how the notions of
Hilbert-style derivation and proof are generalized
in order to deal with the novelty of
allowing for sets of
formulas in the succedent of a 
rule of inference.

\begin{definition}
An \emph{$\CalcA$-derivation} is a rooted directed
tree $\TreeA$ 
such that every node is 
labelled with sets of formulas or with
a discontinuation symbol $\Star$, and in which
every non-leaf node (that is, a node with child nodes) $\NodeA$ in $\TreeA$ is an \emph{expansion of $\NodeA$
by a rule instance} $\RuleA^\sigma$ of $\CalcA$. This means that
the antecedent of $\RuleA^\sigma$ is contained in the label of $n$
and that
$n$ has exactly one child node for
each formula $\FmB$ in the succedent of $\RuleA^\sigma$,
which is, in turn, labelled with the same formulas as those of $\NodeA$
plus $\FmB$. In case $\RuleA^\sigma$ has an empty
succedent, then $\NodeA$ has a single child node
labelled with $\Star$. Figure~\ref{fig:derivationscheme}
illustrates how derivations 
using only finitary inference rules are graphically represented.
We denote by $\NodeLabel{\TreeA}{\NodeA}$
the label of the node $\NodeA$ in the tree
$\TreeA$.

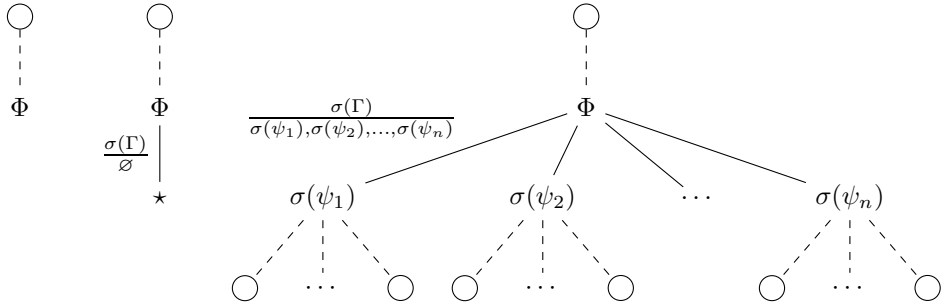
\begin{figure}[H]
    \centering
    \begin{tikzpicture}[every tree node/.style={},
       level distance=1.2cm,sibling distance=1cm,
       edge from parent path={(\tikzparentnode) -- (\tikzchildnode)}, baseline]
        \Tree[.\node[style={draw,circle}] {};
            \edge[dashed];
            [.\node[style={}] {$\FmSetA$};
            ]
        ]
    \end{tikzpicture}\qquad
    \begin{tikzpicture}[every tree node/.style={},
       level distance=1.2cm,sibling distance=1cm,
       edge from parent path={(\tikzparentnode) -- (\tikzchildnode)}, baseline]
        \Tree[.\node[style={draw,circle}] {};
            \edge[dashed];
            [.\node[style={}] {$\FmSetA$};
            \edge node[auto=right] {$\MCRule{\sigma(\FmSetD)}{\EmptySet}$};
            [.{$\Star$}
            ]
            ]
        ]
    \end{tikzpicture}\qquad
    \begin{tikzpicture}[every tree node/.style={},
       level distance=1.2cm,sibling distance=.5cm,
       edge from parent path={(\tikzparentnode) -- (\tikzchildnode)}, baseline]
        \Tree[.\node[style={draw,circle}] {};
            \edge[dashed];
            [.\node[style={}] {$\FmSetA$};
            \edge node[auto=right] {$\MCRule{\sigma(\FmSetD)}{\sigma(\FmB_1),\sigma(\FmB_2),\ldots,\sigma(\FmB_n)}$};
            [.$\sigma(\FmB_1)$
                \edge[dashed];
                [.\node[style={draw,circle}]{};
                ]
                \edge[dashed];
                [.\node[style={}]{$\cdots$};
                ]
                \edge[dashed];
                [.\node[style={draw,circle}]{};
                ]
            ]
            [.$\sigma(\FmB_2)$
                \edge[dashed];
                [.\node[style={draw,circle}]{};
                ]
                \edge[dashed];
                [.\node[style={}]{$\cdots$};
                ]
                \edge[dashed];
                [.\node[style={draw,circle}]{};
                ]
            ]
            [.$\ldots$
            ]
            [.$\sigma(\FmB_n)$
                \edge[dashed];
                [.\node[style={draw,circle}]{};
                ]
                \edge[dashed];
                [.\node[style={}]{$\cdots$};
                ]
                \edge[dashed];
                [.\node[style={draw,circle}]{};
                ]
            ]
            ]
        ]
    \end{tikzpicture}
    \caption{Graphical representation of $\CalcA$--derivations
    using finitary inference rules, in the form of a leaf node, a discontinued node and an expanded node, respectively.}
    \label{fig:derivationscheme}
\end{figure}
\end{definition}

 In Figure~\ref{fig:derivationscheme}, the dashed edges and blank circles represent other nodes and edges that may exist in the derivation. Notice that, in the case of expanded
    nodes, we omit the formulas inherited from the parent
    node, exhibiting only the ones introduced by the
    applied rule of inference. Also, we have written the rule instance
    of the rule being applied in the second and third trees, but in practice only the name
    of the rule of inference will be written for simplicity.
    In these cases, we emphasize that a precondition
    for the application of the rule instance is
    that $\sigma(\FmSetD) \subseteq \FmSetA$
    (that is, the premises of the rule
    being applied are satisfied).

\begin{definition}
Given $\FmSetE \subseteq \LangSetA$, a node $\NodeA$ of an $\CalcA$-derivation $\TreeA$ is said to be
    \emph{$\FmSetE$-closed} when 
    %the label of $\NodeA$ shares a formula
    %with $\FmSetE$ or 
    $\NodeLabel{\TreeA}{\NodeA} \cap \FmSetE \neq \EmptySet$ or
    when $\NodeA$ is labelled with $\Star$.
    If all leaf nodes of $\TreeA$ are $\FmSetE$-closed,
    then $\TreeA$ is said to be $\FmSetE$-closed.
\end{definition}

\begin{definition}
An \emph{$\CalcA$-proof} of a
    statement $(\FmSetA,\FmSetB)$ is a
    $\FmSetB$-closed
    $\CalcA$-derivation whose root
    is labelled with a subset of $\FmSetA$.
\end{definition}
%\marginnote{\fns G: About Example 4. In the case of $\CalcA^{\to\neg}_{S}$ and $\CalcA^{\land\neg}_{\BKName}$, we have $\AnalyticSetA = \{\PropA, \neg\PropA\}$, right? ...given that in both cases $\neg = \lukNeg$ and $\lukNeg$ can tell apart the pair of non-designated values. Could you provide an example without negation? What about a complex formula as separator? Could you also provide more derivations? many many more? ... I need to understand better how these calculi work ... or point me some literature where I could find derivations. Do you have a proof-search strategy? a general heuristic? Or an heuristic that works in some cases? Do we have a normal / canonical form for derivations? 
%%
%%S: in a finite Theta-analytical calculus for finite Theta everything becomes mechanical %Check our papers on it, folder "Sergio Revantha":
%%Axiomatizing non-deterministic many-valued generalized consequence relations _ S Marcelino %_  C Caleiro.pdf (in particular in the conclusion)
%%and
%%"Analytic calculi for monadic PNmatrices  _ C Caleiro _ S Marcelino.pdf"),  a paper dealing %with general theory of these calculi is being written at the moment, it is all quite new. %Although in the Shoesmith and Smiley they used these calculi they did not care about %analyticity, so we adapted the usual notions to this context
%}

The notion of \SetSet{} calculus
succeeds in capturing the notion of
\SetSet{} logics, as the following
result states:

\begin{proposition}
A \SetSet{} calculus $\CalcA$ induces a
\SetSet{} logic $\SetSetCR_{\CalcA}$
such that $\FmSetA \SetSetCR_{\CalcA} \FmSetB$
if, and only if, there is an $\CalcA$-proof
of $(\FmSetA,\FmSetB)$.
This is the smallest logic
(according to $\subseteq$)
containing the rules of inference
of $\CalcA$.
\end{proposition}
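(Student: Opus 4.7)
My plan is to split the statement into two claims and tackle them in sequence. First I would verify that the relation $\SetSetCR_{\CalcA}$ defined by $\CalcA$-provability satisfies the five Scott conditions from Definition~\ref{def:SetSet}, and then I would establish its minimality among \SetSet{} logics extending the rule set of $\CalcA$.

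For the five Scott conditions the first three are direct. For (O)verlap, if $\FmA \in \FmSetA \cap \FmSetB$, then the one-node derivation whose root is labelled by $\{\FmA\}$ is already $\FmSetB$-closed, giving a proof of $(\FmSetA,\FmSetB)$. For (D)ilution, given an $\CalcA$-proof $\TreeA$ of $(\FmSetA,\FmSetB)$, the same tree witnesses $(\FmSetA \cup \FmSetA',\FmSetB \cup \FmSetB')$ since enlarging $\FmSetB$ only makes more leaves $\FmSetB$-closed, and enlarging $\FmSetA$ is compatible with the root-label requirement. For (S)tructurality, applying a substitution $\sigma$ pointwise to every node label of $\TreeA$ yields an $\CalcA$-derivation, because each expansion by a rule instance $\RuleA^{\sigma'}$ is transformed into an expansion by $\RuleA^{\sigma\circ\sigma'}$. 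For (F)initariness, assuming rules are finitary, one argues by a standard König's lemma / compactness style argument: a proof of $(\FmSetA,\FmSetB)$ uses only finitely many premise-formulas from $\FmSetA$ at the root and finitely many succedent-formulas from $\FmSetB$ at each closing leaf, and these witnesses can be collected into finite subsets $\FmSetA'\subseteq \FmSetA$, $\FmSetB' \subseteq \FmSetB$ that still support the same proof.

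The genuinely delicate condition is (C)ut for sets. The plan is as follows: assume $\FmSetA,\SetCut \SetSetCR_{\CalcA} \FmSetComp{\SetCut},\FmSetB$ for every $\SetCut \subseteq \LangSetA$, and build a $\FmSetB$-closed $\CalcA$-derivation from $\FmSetA$. Start with the single node labelled $\FmSetA$; for each leaf $\NodeA$ that is not yet $\FmSetB$-closed, let $\SetCut := \NodeLabel{\TreeA}{\NodeA}$ and instantiate the hypothesis at this $\SetCut$ to obtain an $\CalcA$-proof $\TreeA_{\NodeA}$ of $(\FmSetA\cup\SetCut,\FmSetComp{\SetCut}\cup\FmSetB)$; graft $\TreeA_{\NodeA}$ onto $\NodeA$. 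Any new leaf $\NodeA'$ inherits the formulas of $\NodeA$ and is either $\FmSetB$-closed (done) or contains a formula from $\FmSetComp{\SetCut}$, meaning $\NodeLabel{\TreeA}{\NodeA'}\supsetneq \NodeLabel{\TreeA}{\NodeA}$. Iterating this refinement transfinitely and taking unions at limits yields a well-founded $\CalcA$-derivation all of whose leaves are $\FmSetB$-closed; this is the main technical point, and the well-foundedness at limits is the step I expect to require most care, as it relies on a global compactness/axiom-of-choice style argument to stop the process.

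Finally, for minimality, I would argue in two directions. On one hand, every rule of $\CalcA$ belongs to $\SetSetCR_{\CalcA}$: an instance $\RuleA = \MCRule{\FmSetD}{\FmSetE}$ is witnessed by the two-level $\CalcA$-derivation obtained by expanding the root labelled $\FmSetD$ with $\RuleA$ itself. On the other hand, any \SetSet{} logic $\SetSetCR$ containing every rule of $\CalcA$ must contain $\SetSetCR_{\CalcA}$: given an $\CalcA$-proof $\TreeA$ of $(\FmSetA,\FmSetB)$, one shows by induction on the well-founded tree order that at each node $\NodeA$ one has $\NodeLabel{\TreeA}{\NodeA} \SetSetCR \FmSetB$, using (O) at $\FmSetB$-closed leaves (and the convention that a $\Star$-labelled leaf witnesses derivability of the empty succedent), (D) to align the local rule instance with the global context, and (C) for sets together with the rule instance applied at $\NodeA$ to combine the conclusions of its children; structurality handles the passage from rules to their instances. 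Applying this at the root yields $\FmSetA \SetSetCR \FmSetB$, completing the proof.
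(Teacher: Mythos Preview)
The paper does not prove this proposition at all: it simply cites \cite[Theorem~1]{shoesmithsmiley1978}. Your sketch therefore goes well beyond the paper, and for (O), (D), (S), (F) and for the minimality direction your arguments are essentially correct (the minimality induction rightly presupposes that proof trees are well-founded, and the combination step at an internal node is indeed an iterated single-formula cut, which is derivable from (C)).

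The genuine gap is precisely where you flag it: the verification of (C)ut for sets. Your direct construction---repeatedly grafting a proof of $(\SetCut,\FmSetComp{\SetCut}\cup\FmSetB)$ onto each non-$\FmSetB$-closed leaf with label $\SetCut$---does ensure that labels strictly grow along any branch that has not yet closed, but strict growth of subsets of $\LangSetA$ does not bound the length of a branch by anything finite; a branch can have order type~$\omega$ with labels $\SetCut_0\subsetneq\SetCut_1\subsetneq\cdots$ and simply never terminate. ``Taking unions at limits'' does not help: at stage~$\omega$ such a branch has no terminal node to expand, and the union tree has an infinite branch, so it is not a well-founded derivation. Nothing in your outline forces termination, and no ``compactness/choice'' argument of the kind you gesture at will manufacture a leaf on an infinite branch.

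The argument in Shoesmith--Smiley (and the standard one) avoids this by contraposition: assume $(\FmSetA,\FmSetB)$ has no $\CalcA$-proof; by Zorn's lemma extend it to a \emph{maximal} pair $(T,F)$ with $\FmSetA\subseteq T$, $\FmSetB\subseteq F$ and no $\CalcA$-proof; show that $T,F$ partition $\LangSetA$ (disjointness from (O); totality because if $\FmA\notin T\cup F$ then by maximality both $(T\cup\{\FmA\},F)$ and $(T,F\cup\{\FmA\})$ have proofs, and grafting the first onto the $\FmA$-closed leaves of the second yields a proof of $(T,F)$); finally set $\SetCut:=T$, so that the hypothesis gives $T\SetSetCR_{\CalcA}F$, a contradiction. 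This Lindenbaum-style maximality step is what replaces your missing termination argument.
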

\begin{proof}
See~\cite[Theorem 1]{shoesmithsmiley1978}.
\end{proof}

\begin{example}
We provide below an $\CalcA^{\to\neg}_{\SThreeName}$-proof 
and an $\CalcA^{\land\neg}_{\BKName}$-proof
witnessing, respectively,
that $\neg\PropA,\PropB,\PropC \SetSetCR_{\CalcA^{\to\neg}_{\SThreeName}} (\PropB \to \PropA)\to(\PropD\to\PropC),\PropA\to\PropD$
and $\neg(\PropA\land\PropB),\PropA \SetSetCR_{\CalcA^{\land\neg}_{\BKName}} \neg\PropB$.
\begin{figure}[H]
\centering
\begin{tikzpicture}[every tree node/.style={},
   level distance=1.2cm,sibling distance=1cm,
   edge from parent path={(\tikzparentnode) -- (\tikzchildnode)}, baseline]
    \Tree[.\node[style={}] {$\neg\PropA,\PropB,\PropC$};
        \edge[] node[auto=right]{$\RuleA^{\SobocNeg}_{3}$};
        [.{$\PropD$}
            \edge[] node[auto=right]{$\RuleA^{\SobocImp}_{1}$};
            [.{$\PropA\to\PropD$}
            ]
        ]
        [.{$\neg\PropD$}
            \edge[] node[auto=right]{$\RuleA^{\SobocImp}_{2}$};
            [.{$\neg(\PropB\to\PropA)$}
                \edge[] node[auto=right]{$\RuleA^{\SobocImp}_{1}$};
                [.{$\PropD\to\PropC$}
                    \edge[] node[auto=right]{$\RuleA^{\SobocImp}_{1}$};
                    [.{$(\PropB \to \PropA)\to(\PropD\to\PropC)$}
                    ]
                ]
            ]
        ]
    ]
\end{tikzpicture}\quad
\begin{tikzpicture}[every tree node/.style={},
   level distance=1.2cm,sibling distance=1cm,
   edge from parent path={(\tikzparentnode) -- (\tikzchildnode)}, baseline]
    \Tree[.\node[style={}] {$\neg(\PropA\land\PropB),\PropA$};
        \edge[] node[auto=right]{$\RuleA^{\BKleeConj}_{6}$};
        [.{$\PropB$}
            \edge[] node[auto=right]{$\RuleA^{\BKleeConj}_{1}$};
            [.{$\PropA\land\PropB$}
                \edge[] node[auto=right]{$\RuleA^{\BKleeNeg}_{3}$};
                [.{$\Star$}
                ]
            ]
        ]
        [.{$\neg\PropB$}
        ]
    ]
\end{tikzpicture}
\end{figure}

\end{example}

%\vspace{0.5cm}
%
%{\small
%\begin{math}
%\hspace{-2cm}
%\AXC{$ \ $}
%\RL{\fns $r_{11}$}
%\UIC{$p \aand q, \circ (p \aand q)$}
%\RL{$r_2$}
%\UIC{$p, \circ (p \aand q)$}
%%
%\AXC{$ \ $}
%\RL{\fns $r_{11}$}
%\UIC{$p \aand q, \circ (p \aand q)$}
%\RL{$r_2$}
%\UIC{$p, \circ (p \aand q)$}
%%
%\AXC{$ \ $}
%\RL{\fns $r_{11}$}
%\UIC{$p \aand q, \circ (p \aand q)$}
%\RL{$r_3$}
%\UIC{$q, \circ (p \aand q)$}
%%
%\AXC{$ \ $}
%\RL{\fns $r_{11}$}
%\UIC{$p \aand q, \circ (p \aand q)$}
%\RL{$r_{12}$}
%\UIC{$\neg (p \aand q), \circ (p \aand q)$}
%\RL{$r_{17}$}
%\UIC{$\neg p, \neg q, \circ (p \aand q)$}
%\RL{\fns $E$}
%\UIC{$\neg q, \neg p, \circ (p \aand q)$}
%%
%\RL{\fns $r_{15}$}
%\TIC{$\neg p, \neg p, \circ (p \aand q), \circ (p \aand q), \circ (p \aand q)$}
%\RL{\fns $C\times 3$}
%\UIC{$\neg p, \circ (p \aand q)$}
%%
%\AXC{$ \ $}
%\RL{\fns $r_{10}$}
%\UIC{$\circ p, \circ p \ararr \circ (p \aand q)$}
%%
%\RL{\fns $r_{13}$}
%\TIC{$\circ (p \aand q), \circ (p \aand q), \circ p \ararr \circ (p \aand q)$}
%\RL{\fns $C$}
%\UIC{$\circ (p \aand q), \circ p \ararr \circ (p \aand q)$}
%\RL{\fns $r_9$}
%\UIC{$\circ p \ararr \circ (p \aand q), \circ p \ararr \circ (p \aand q)$}
%\RL{\fns $C$}
%\UIC{$\circ p \ararr \circ (p \aand q)$}
%\DP
%\end{math}
%}

Knowing in advance that the subformulas of a statement of interest 
are enough for providing
a proof of the statement
is an important feature of a
calculus, usually referred to as \emph{analyticity},
which was already observed in the
3-labelled systems introduced in the
previous section.
The next definition introduces a broader notion of
analyticity, called \emph{$\AnalyticSetA$-analyticity},
where $\AnalyticSetA \subseteq \LangSetAp$ ---
that is, a set of unary formulas. The main idea
is to allow for not only subformulas of a statement %of interest 
to appear in an analytic proof,
but also formulas resulting from substitutions of those
subformulas over the formulas in $\AnalyticSetA$. 
For example, if $\AnalyticSetA = \{\PropC, \neg\PropC\}$
%\marginnote{\fns G: Can we write here $\AnalyticSetA = \{r, \neg r\}$ instead of $\AnalyticSetA = \{p, \neg p\}$... it's the same but just to avoid confusion given that $p$ occurs in $\Gamma = \neg p$ and $\Delta = \neg (p \wedge q)$ in your example.}
,
a $\AnalyticSetA$-analytic proof %in a given calculus 
witnessing that $\neg\PropA$
follows from $\neg(\PropA\land\PropB)$ would
use only formulas in $\{\PropA,\PropB,\neg\PropA,\neg\PropB,\neg\neg\PropA,\PropA\land\PropB,\neg(\PropA\land\PropB),\neg\neg(\PropA\land\PropB)\}$.
We formalize the notion of $\Theta$-analyticity
below.

\begin{definition}
Let $\AnalyticSetA \subseteq \LangSetAp$ and $\CalcA$ be a 
\SetSet{} calculus. We say that an $\CalcA$-proof $\TreeA$
of $(\FmSetA,\FmSetB)$
is \emph{$\AnalyticSetA$-analytic} in case only formulas
in $\SubfTheta{\Theta}{\FmSetA\cup\FmSetB}\SymbDef \Subf{\FmSetA\cup\FmSetB} \cup \{\sigma(\FmA) \mid \FmA\in\AnalyticSetA, \sigma: P \to \Subf{\FmSetA\cup\FmSetB}\}$ appear in the labels of the nodes
of $\TreeA$.
We write, in this case, that
$\FmSetA \AnaSetSetCR{\CalcA}{\AnalyticSetA} \FmSetB$.
When 
we have $\FmSetA \AnaSetSetCR{\CalcA}{\AnalyticSetA} \FmSetB$
whenever $\FmSetA \SetSetCR_{\CalcA}\FmSetB$ (notice that the converse always hold),
then $\CalcA$ is said to be \emph{$\AnalyticSetA$-analytic}.
\end{definition}

Observe that, in case $\AnalyticSetA = \EmptySet$
or $\AnalyticSetA = \{\PropA\}$,
we obtain the standard notion of analyticity, according to which
only the subformulas of the statement are enough to produce
a proof of that very statement in case such proof exists.

As explained in \cite{marcelino2019},
finite and finitary $\Theta$-analytic calculi are associated to a proof-search algorithm and a countermodel-search algorithm,
and consequently to a decision procedure for the corresponding \SetSet{} logics. 
Briefly put, if we want to know whether $\FmSetA \;\SetSetCR_\CalcA\; \FmSetB$,
where $\CalcA$ is a finite and finitary $\Theta$-analytic
\SetSet{} calculus, 
obtaining a proof when the answer is positive and a (symbolic) countermodel otherwise,
we may attempt to build a derivation in
the following way: start from
a single node labelled with $\FmSetA$
and search for a rule instance
of $\CalcA$ not used in the same branch
with formulas in the set
$\SubfTheta{\Theta}{\FmSetA\cup\FmSetB}$
whose premises are in~$\FmSetA$. If there is one,
expand that node by creating a
child node labelled with $\FmSetA\cup\{\FmB\}$ for each formula $\FmB$ 
in the succedent of the chosen rule instance
and repeat this step for each new node.
In case it fails in finding a rule instance for applying to some node,
we may conclude that no proof exists,
and from each non-$\FmSetB$-closed branch we may extract a countermodel.
In case every branch eventually gets $\FmSetB$-closed, the resulting
tree is a proof of the desired statement.
This procedure is exponential-time in the size of the statement of interest in general
and, when $\CalcA$ is \SetFmla{}
(all inference rules have a single formula
in the succedent), the running-time is
polynomial.

\begin{definition}
\label{def:axiomatizes}
Let $\MatA$ be a logical matrix and $\CalcA$ be a multiple
conclusion calculus. Then $\CalcA$ is \emph{sound}
for $\MatA$ when $\SetSetCR_{\CalcA} \subseteq \SetSetCR_{\MatA}$ and it is \emph{complete}
for $\MatA$ when the converse holds.
When $\CalcA$ is sound and complete for $\MatA$,
we say that $\CalcA$ is \emph{adequate for} or \emph{axiomatizes} $\MatA$.
\end{definition}

\begin{example}
\label{ex:trivialsetset}
The trivial three-valued matrices $\MatA$ are those having as sets of designated values either 
$\EmptySet$ or $\{\bv, \uv, \tv\}$.
It is not hard to see that,
in the first case, the calculus containing
the single rule $\MCRule{\PropA}{\EmptySet}$
axiomatizes $\MatA$; likewise, in the second case,
the rule $\MCRule{\EmptySet}{\PropA}$ is enough.
Note that, in the case of 3-labelled calculi,
the calculus $\LabSysNameTriv$ having only the rules of Identity and
Weakening can be easily seen to axiomatize
the trivial matrices (refer to Definition~\ref{def:crofthreelabelled} for the definitions of the consequence relations in these cases).
\end{example}

%\begin{example}
%\TODO{A more interesting example of a calculus here. Maybe Wrònski's matrix?}
%\end{example}

In view of Example~\ref{ex:trivialsetset}, from now on 
we shall only 
consider matrices with
either one or two designated values. We may 
%refine 
reduce even more
the configurations to be considered by noticing that
a phenomenon observable in %the mentioned example
Example~\ref{ex:trivialsetset}
occurs in general:
if $\MatA_1 \SymbDef \langle \AlgA,\DesSetA_1 \rangle$
and $\MatA_2 \SymbDef \langle \AlgA,\ValuesSetComp{A}{\DesSetA_1} \rangle$, then $\FmSetA \SetSetCR_{\MatA_1} \FmSetB$ if{f}
$\FmSetB \SetSetCR_{\MatA_2} \FmSetA$, thus
an axiomatization for $\MatA_1$ can be readily turned into (or used as) an axiomatization
for $\MatA_2$. 
Indeed, the procedure described in the sequel
will  deliver a calculus for $\MatA_1$
from which a calculus for $\MatA_2$ may be produced
by simply turning the inference rules of the former upside down.
In view of this duality,
in the three-valued setting
we will first restrict our attention only to matrices having a single designated
value, and will then discuss how to extend the
approach to the other cases.

%\subsection{Axiomatizing monadic three-valued matrices}
\subsection{Axiomatizing three-valued matrices: the monadic case}
\label{Axiomatizing3valuedMatricesTheMonadicCase}

We say that $\MatA \SymbDef \langle \AlgA, D \rangle$ is \emph{monadic} whenever there is a unary formula
$\Sep(\PropA) \in \LangSetAp$,
%\marginnote{\fns G: What does it mean "unary formula"? S: in one variable}
sometimes called a \emph{separator}, for each pair of truth values $x, y \in A$, such that
$\Sep_{\AlgA}(x) \in D$ and $\Sep_{\AlgA}(y) \in \ValuesSetComp{A}{D}$
or vice-versa.
Throughout this section, we fix a three-valued matrix $\MatA \SymbDef \langle \AlgA, D \rangle$ with $A \SymbDef \{\dv,\uva,\uvb\}$ and $D \SymbDef \{\dv\}$.
Notice that the propositional variable $\PropA$ separates $\dv$ from $\uva$ and $\uvb$, so monadicity
in this situation
%\marginnote{\fns G: With "setting" you mean the special case $\MatA \SymbDef \langle \AlgA, D \rangle$ with $A \SymbDef \{\dv,\uva,\uvb\}$ and $D \SymbDef \{\dv\}$?} 
requires just the existence of
a separator $\Sep$ for the non-designated values $\uva$ and $\uvb$.
Granted the existence of this separator, then,
for each $x \in \{\uva,\uvb\}$, let $(\DesSeps{x}(\PropA),\NDesSeps{x}(\PropA))$ be a quasi-partition (that is, a partition allowing for empty components) of $\{\PropA, \Sep(\PropA)\}$ such that $\FmB \in \DesSeps{x}(\PropA)$
iff $\FmB_\AlgA(x) \in D$ and $\FmB \in \NDesSeps{x}(\PropA)$
iff $\FmB_\AlgA(x) \in \ValuesSetComp{A}{D}$;
in addition, set $\DesSeps{\dv}(\PropA) \SymbDef \{\PropA\}$ and $\NDesSeps{\dv}(\PropA) \SymbDef \EmptySet$.
It suf{f}ices then to consider only two possible configurations
for each $\DesSeps{x}(\PropA)$ and $\NDesSeps{x}(\PropA)$, depending on whether 
$\Sep_{\AlgA}(\uva) \in D$ and $\Sep_{\AlgA}(\uvb) \in \ValuesSetComp{A}{D}$,
or vice-versa, which we present in tabular form below:

\begin{table}[H]
    \centering
    \begin{tabular}{c|cc}
        \toprule
         $x$&$\DesSeps{x}(\PropA)$ & $\NDesSeps{x}(\PropA)$ \\
         \midrule
         $\dv$ & $\{\PropA\}$ & $\EmptySet$\\
         $\uva$ & $\{\Sep(\PropA)\}$ & $\{\PropA\}$\\
         $\uvb$ & $\EmptySet$ & $\{\PropA, \Sep(\PropA)\}$\\
         \bottomrule
    \end{tabular}\qquad
    \begin{tabular}{c|cc}
        \toprule
         $x$&$\DesSeps{x}(\PropA)$ & $\NDesSeps{x}(\PropA)$ \\
         \midrule
         $\dv$ & $\{\PropA\}$ & $\EmptySet$\\
         $\uva$ & $\EmptySet$ & $\{\PropA, \Sep(\PropA)\}$\\
         $\uvb$ & $\{\Sep(\PropA)\}$ & $\{\PropA\}$\\
         \bottomrule
    \end{tabular}
    \caption{The configuration at the left applies when $\Sep_{\AlgA}(\uva) \in D$ and $\Sep_{\AlgA}(\uvb) \in \ValuesSetComp{A}{D}$, while the one at the right applies when
    $\Sep_{\AlgA}(\uvb) \in D$ and $\Sep_{\AlgA}(\uva) \in \ValuesSetComp{A}{D}$.}
    \label{tab:seps_configs}
\end{table}

In any case, the following lemma, which is a particular case of the general result given in~\cite{marcelino2019}, guarantees that the
separators uniquely characterizes each truth value:

\begin{lemma}
\label{lem:caractvalues}
For all $\MatA$-valuation $\ValuationA$, $x \in A$ and $\FmA \in \LangSetA$,
\[
v(\FmA) = x \text{ if{f} }
v(\DesSeps{x}(\FmA)) \subseteq \{\dv\} \text{ and }
v(\NDesSeps{x}(\FmA)) \subseteq \{\uva, \uvb\}.
\]
\end{lemma}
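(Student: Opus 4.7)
The plan is to prove both directions separately, exploiting the way $\DesSeps{x}(\PropA)$ and $\NDesSeps{x}(\PropA)$ partition $\{\PropA,\Sep(\PropA)\}$ according to whether each member is evaluated in $D$ at $x$ under $\AlgA$. Throughout, recall the conventions $\DesSeps{x}(\FmA)=\{\FmB(\FmA)\mid\FmB(\PropA)\in\DesSeps{x}(\PropA)\}$ and analogously for $\NDesSeps{x}(\FmA)$, and that $v(\FmB(\FmA))=\FmB_\AlgA(v(\FmA))$ for any unary $\FmB$.

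For the forward direction I would fix an $\MatA$-valuation $v$, a value $x\in A$, and a formula $\FmA\in\LangSetA$ with $v(\FmA)=x$, and just unwind the definitions. Indeed, for every $\FmB(\PropA)\in\DesSeps{x}(\PropA)$ one has by definition $\FmB_\AlgA(x)\in D=\{\dv\}$, hence $v(\FmB(\FmA))=\FmB_\AlgA(x)=\dv$; and similarly every $\FmB(\PropA)\in\NDesSeps{x}(\PropA)$ satisfies $\FmB_\AlgA(x)\in\{\uva,\uvb\}$, giving $v(\FmB(\FmA))\in\{\uva,\uvb\}$. This establishes $v(\DesSeps{x}(\FmA))\subseteq\{\dv\}$ and $v(\NDesSeps{x}(\FmA))\subseteq\{\uva,\uvb\}$.

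For the converse, suppose both inclusions hold and set $y\SymbDef v(\FmA)$; the target is $y=x$. Since $\DesSeps{x}(\PropA)$ and $\NDesSeps{x}(\PropA)$ partition $\{\PropA,\Sep(\PropA)\}$, the hypotheses are equivalent to the statement that for every $\FmB\in\{\PropA,\Sep(\PropA)\}$, $\FmB_\AlgA(y)\in D$ iff $\FmB\in\DesSeps{x}(\PropA)$, i.e., iff $\FmB_\AlgA(x)\in D$. Taking $\FmB=\PropA$ yields $y\in D\Leftrightarrow x\in D$, i.e., $y=\dv\Leftrightarrow x=\dv$; if this common case occurs we are done, otherwise $x,y\in\{\uva,\uvb\}$. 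Taking $\FmB=\Sep(\PropA)$ then gives $\Sep_\AlgA(y)\in D\Leftrightarrow \Sep_\AlgA(x)\in D$, and since by the defining property of a separator $\Sep_\AlgA$ sends exactly one of $\uva,\uvb$ into $D$ and the other into $\overline{D}$ (cf.\ Table~\ref{tab:seps_configs}), this equivalence forces $y=x$.

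The main obstacle is purely notational: keeping straight that $\DesSeps{x}(\FmA)$ and $\NDesSeps{x}(\FmA)$ are indexed by the \emph{semantic} value $x$ but are sets of \emph{syntactic} objects built from $\FmA$. Once this is untangled, the content of the lemma reduces to the combinatorial observation that the three pairs $(\DesSeps{x}(\PropA),\NDesSeps{x}(\PropA))$ for $x\in\{\dv,\uva,\uvb\}$ are pairwise distinct — which is exactly what the separator is designed to guarantee — so the pair completely identifies $v(\FmA)$.
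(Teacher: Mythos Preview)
Your argument is essentially correct and close in spirit to the paper's (which argues the converse by contrapositive, picking a separator $\SepB\in\{\PropA,\Sep(\PropA)\}$ for the pair $x,y$ and showing it witnesses the failure of one of the two inclusions). There is, however, a small inaccuracy in your converse direction that you should repair.

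You assert that $(\DesSeps{x}(\PropA),\NDesSeps{x}(\PropA))$ is a partition of $\{\PropA,\Sep(\PropA)\}$ for every $x$, and then rephrase the hypotheses as the biconditional ``$\FmB_\AlgA(y)\in D$ iff $\FmB_\AlgA(x)\in D$'' for all $\FmB\in\{\PropA,\Sep(\PropA)\}$. But by definition the pair is a (quasi-)partition of $\{\PropA,\Sep(\PropA)\}$ only for $x\in\{\uva,\uvb\}$; for $x=\dv$ one has $\DesSeps{\dv}(\PropA)=\{\PropA\}$ and $\NDesSeps{\dv}(\PropA)=\EmptySet$, so $\Sep(\PropA)$ lies in neither component and the hypotheses say nothing about $\Sep_\AlgA(y)$. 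Consequently your ``equivalent'' reformulation, and the further identification $\FmB\in\DesSeps{x}(\PropA)\Leftrightarrow\FmB_\AlgA(x)\in D$, both fail at $x=\dv$ for $\FmB=\Sep(\PropA)$.

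Fortunately this does not damage the actual deductions you make. The formula $\PropA$ \emph{does} lie in $\DesSeps{x}(\PropA)\cup\NDesSeps{x}(\PropA)$ for every $x$ (it is in $\DesSeps{\dv}(\PropA)$ and in $\NDesSeps{x}(\PropA)$ for $x\in\{\uva,\uvb\}$), so your step ``taking $\FmB=\PropA$ yields $y\in D\Leftrightarrow x\in D$'' is valid on its own, and when $x=\dv$ this already finishes the proof. Your appeal to $\FmB=\Sep(\PropA)$ is only made when $x\in\{\uva,\uvb\}$, where the partition claim \emph{is} correct. So the fix is just to drop the blanket partition claim and instead observe that $\PropA$ always belongs to one of the two sets, handle $x=\dv$ immediately from that, and invoke the partition only in the remaining case $x\in\{\uva,\uvb\}$.
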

\begin{proof}
The left-to-right direction easily follows from the definitions
of $\DesSeps{x}(\PropA)$ and $\NDesSeps{x}(\PropA)$.
For the converse, suppose that
$\ValuationA(\FmA) = y \neq x$.
Assume that
$\SepB \in \LangSetAp$ separates $x$ and $y$,
with $\SepB(\PropA) \in \DesSeps{x}(\PropA)$. Then
$\ValuationA(\SepB(\FmA)) = \SepB_\AlgA(\ValuationA(\FmA)) = \SepB_\AlgA(y) \in \{\uva,\uvb\}$,
%, and
%$\SepB^\AlgA(x) \in \{\uva,\uvb\}$, 
hence $v(\DesSeps{x}(\FmA)) \not\subseteq \{\dv\}$.
The case when $\SepB(\PropA) \in \NDesSeps{x}(\PropA)$
is analogous.
\end{proof}

\begin{example}
Clearly, in a three-valued $\Sigma$-matrix $\langle \ThreeValuesSet, \{\tv\} \rangle$, 
when there is $\neg \in \Sigma_1$ interpreted either as $\lukNeg$ or $\HNeg$,
$\neg \PropA$
is a separator for $\uv$ and $\bv$,
since 
$\lukNeg(\uv) = {\uv} \notin \{\tv\}$ and $\lukNeg(\bv) = \tv \in \{\tv\}$.
%since $\lukNeg(\uv) = \Separates{\uv}{\tv} = \lukNeg(\bv)$,
%and $\HNeg(\uv) = \Separates{\bv}{\tv} = \HNeg(\bv)$.\marginnote{\fns G: What is the meaning of the symbol $\Separates{}{}$? Can we just write $\lukNeg(\uv) = \bv \notin D$ and $\lukNeg(\bv) = \tv \in D$?}
\end{example}

%\begin{example}
%\TODO{Show the three-valued non-monadic example (the one that is Classical Logic).}
%\end{example}

%\begin{example}
%Considering a matrix $\langle \ThreeValuesSet, \{\tv\} \rangle$
%with $\Sep$ separating $\uv$ and $\bv$, Lemma \ref{lem:caractvalues}
%tells us that, for all $\MatA$-valuation $v$,
%\begin{align*}
%    v(\FmA) &= \tv \text{ if{f} } v(\FmA) = \tv\\
%    v(\FmA) &= \uv \text{ if{f} } v(\Sep(\FmA)) = \tv \text{ and } v(\FmA) \in \{\uv,\bv\}\\
%    v(\FmA) &= \bv \text{ if{f} } v(\Sep(\FmA)),v(\FmA) \in \{\uv,\bv\}\\
%\end{align*}
%\end{example}

Below we provide discriminators applicable to some of the prominent logics presented in Section~\ref{sec:CaseStudies}.
\begin{table}[H]
    \centering
    \begin{tabular}{c|c|c}
        \toprule
        \multicolumn{3}{c}{$\lukNeg$}\\
        \midrule
        %\toprule
         $x$ & $\Omega_x^{\DSetOne}(p)$ & $\mho_x^{\DSetOne}(p)$\\
         \midrule
         $\bv$ & $\neg p$ & $p$\\
         $\uv$& $\EmptySet$ & $p, \neg p$\\
         $\tv$ & $p$ & $\EmptySet$\\
         \bottomrule
    \end{tabular}
    \quad
    \begin{tabular}{c|c|c}
        \toprule
        \multicolumn{3}{c}{$\lukNeg$}\\
        \midrule
        %\toprule
         $x$ & $\Omega_x^{\DSetTwo}(p)$ & $\mho_x^{\DSetTwo}(p)$\\
         \midrule
         $\bv$ & $\EmptySet$ & $p$\\
         $\uv$& $p, \neg p$ & $\EmptySet$\\
         $\tv$ & $p$ & $\neg p$\\
         \bottomrule
    \end{tabular}
    \caption{Discriminators for matrices with $\lukNeg$.}
    \label{tab:lukdisc}
\end{table}

\begin{table}[H]
    \centering
    \begin{tabular}{c|c|c}
        \toprule
        \multicolumn{3}{c}{$\HNeg$}\\
        \midrule
        %\toprule
         $x$ & $\Omega_x^{\DSetOne}(p)$ & $\mho_x^{\DSetOne}(p)$\\
         \midrule
         $\bv$ & $\neg p$ &$p$\\
         $\uv$& $\EmptySet$ & $p, \neg p$\\
         $\tv$ &$p$ & $\EmptySet$\\
         \bottomrule
    \end{tabular}
    \caption{Discriminator for matrices with $\HNeg$.}
\end{table}

The next example shows that non-monadicity
is not hard to witness: a slight change
in the matrix of $\GThreeName$ would do.
In Subsection~\ref{sec:nonmonadic},
we will see other examples 
of non-monadic matrices, as well
as \SetSet{} axiomatizations for them.

\begin{example}
The matrix resulting from the one of $\GThreeName$ by replacing the set of designated
values by $\DSetTwo$ is not monadic.
One can see this by producing by
composition 
all the unary operators on $\ThreeValuesSet$
using the connectives of $\GThreeName$,
and noticing that none of them
can separate $\tv$ from $\uv$.
Actually, a shorter path for such conclusion is by noticing that this matrix is not even reduced, hence it cannot be monadic~\cite{shoesmithsmiley1978}.
%{\color{red}V: Sérgio, can we mention here
%without entering in much detail about
%this matrix being non-reduced and thus non-monadic? Is there a paper with this result to cite?S: I've added a sentence...we can say what is the reduction obtained and maybe cite SS, he surely says that monadicity implies reduced somewhere, but it also follows immediately from the definition...}
\end{example}

From now on, let $\AnalyticSetA \SymbDef \{\PropA, \Sep(\PropA)\}$
and assume, without loss of generality, that the leftmost configuration presented
in Table~\ref{tab:seps_configs} defines a discriminator for $\MatA$.
We now define a recipe to generate a finite $\AnalyticSetA$-analytical \SetSet{} calculus for $\MatA$.

\begin{definition}
\label{def:monadiccalc}
Define $\CalcA_\MatA$ 
as the \SetSet{} calculus containing, for each $k \in \omega$,
$\conn \in \Sigma^k$  and $x_1,\ldots,x_k \in A$, the
following rule schemas:
\begin{align*}
    \RuleA_{\conn, x_1,\ldots,x_k, T_y} \SymbDef \MCRule{\bigcup_{i=1}^k \DesSeps{x_i}(\PropA_i)}{\bigcup_{i=1}^k \NDesSeps{x_i}(\PropA_i), T_y(\conn(\PropA_1,\ldots,\PropA_k))},& \text{ for each } T_y \in \DesSeps{y}(\PropA), and\\
    \RuleA_{\conn, x_1,\ldots,x_k, F_y} \SymbDef \MCRule{\bigcup_{i=1}^k \DesSeps{x_i}(\PropA_i), F_y(\conn(\PropA_1,\ldots,\PropA_k))}{\bigcup_{i=1}^k \NDesSeps{x_i}(\PropA_i)},& \text{ for each } F_y \in \NDesSeps{y}(\PropA),\\
    &\text{where }y \SymbDef \conn_\AlgA(x_1,\ldots,x_k).
\end{align*}
\end{definition}

\begin{theorem}
\label{the:soundness}
$\CalcA_\MatA$ is sound with respect to $\MatA$, that is, 
$\SetSetCR_{\CalcA_{\MatA}} \subseteq \SetSetCR_{\MatA}$.
\end{theorem}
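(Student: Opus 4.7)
The plan is to reduce soundness of the calculus to (a) soundness of each individual rule and (b) a standard inductive argument on the structure of $\CalcA_\MatA$-derivations. The key tool throughout will be Lemma~\ref{lem:caractvalues}, which allows us to pin down the value $v(\FmA)$ of any formula $\FmA$ from the designation status of $\DesSeps{x}(\FmA)$ and $\NDesSeps{x}(\FmA)$.

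First I would verify rule-soundness, i.e., that for every rule $\RuleA_{\conn, x_1,\ldots,x_k, T_y}$ and $\RuleA_{\conn, x_1,\ldots,x_k, F_y}$ in $\CalcA_\MatA$, the \SetSet{} statement between its antecedent and its succedent holds in $\MatA$. Consider a $T_y$-rule and suppose, for contradiction, that some $\MatA$-valuation $v$ refutes it: i.e.\ $v\bigl(\bigcup_i \DesSeps{x_i}(\PropA_i)\bigr) \subseteq D$, $v\bigl(\bigcup_i \NDesSeps{x_i}(\PropA_i)\bigr) \subseteq \overline{D}$, and $v(T_y(\conn(\PropA_1,\ldots,\PropA_k))) \in \overline{D}$. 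The right-to-left direction of Lemma~\ref{lem:caractvalues} applied to each $\PropA_i$ gives $v(\PropA_i) = x_i$, hence $v(\conn(\PropA_1,\ldots,\PropA_k)) = \conn_\AlgA(x_1,\ldots,x_k) = y$. Since $T_y \in \DesSeps{y}(\PropA)$, the left-to-right direction of Lemma~\ref{lem:caractvalues} then forces $v(T_y(\conn(\PropA_1,\ldots,\PropA_k))) \in D$, contradicting the choice of $v$. The argument for the $F_y$-rules is symmetric: the extra premise $F_y(\conn(\PropA_1,\ldots,\PropA_k))$ now plays the role of the missing succedent formula, and since $F_y \in \NDesSeps{y}(\PropA)$ we derive a contradiction in exactly the same fashion.

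Next I would lift this to full soundness of the calculus. Fix an $\CalcA_\MatA$-proof $\TreeA$ of $(\FmSetA, \FmSetB)$ and an $\MatA$-valuation $v$; I must show that if $v(\FmSetA) \subseteq D$ then $v(\FmSetB) \cap D \neq \varnothing$. Assume $v(\FmSetA) \subseteq D$. I would construct, top-down, a branch of $\TreeA$ along which every node label is mapped into $D$ by $v$. The root qualifies because its label is a subset of $\FmSetA$. At an expanded node $\NodeA$ whose label is $v$-designated, an instance $\RuleA^\sigma$ of some rule of $\CalcA_\MatA$ is applied; by rule-soundness and structurality (Definition~\ref{def:SetSet}, condition (S)), $v$ must designate at least one formula of the (possibly empty) succedent, and that formula labels a child node which can therefore be added to the branch. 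A $\Star$-child would require the applied rule to have empty succedent, but then rule-soundness forbids the antecedent from being entirely $v$-designated, contradicting the inductive hypothesis; so the branch never terminates in $\Star$. Being finite, the branch ends at a leaf, which by $\FmSetB$-closedness of $\TreeA$ must have its label intersecting $\FmSetB$; picking any formula in the intersection yields a member of $\FmSetB$ mapped by $v$ into $D$, as required.

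The argument is essentially routine once Lemma~\ref{lem:caractvalues} is in place; the only subtle point is the handling of $\Star$-nodes, which is precisely what justifies treating empty-succedent rules as ``closing'' derivations. No step should be particularly hard, and no delicate case analysis on which configuration of Table~\ref{tab:seps_configs} is in force is needed, because both configurations feed uniformly into the hypothesis of Lemma~\ref{lem:caractvalues}.
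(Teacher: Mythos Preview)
Your proposal is correct and follows essentially the same approach as the paper: both establish rule-soundness via Lemma~\ref{lem:caractvalues} by assuming a refuting valuation, pinning down each $v(\PropA_i)=x_i$, and then deriving a contradiction from $T_y\in\DesSeps{y}$ (resp.\ $F_y\in\NDesSeps{y}$). The only difference is that you spell out the induction on derivations lifting rule-soundness to calculus-soundness, whereas the paper leaves this step implicit (it follows at once from Proposition~\ref{def:axiomatizes}'s surrounding discussion and the fact that $\SetSetCR_{\CalcA_\MatA}$ is the least \SetSet{} logic containing the rules).
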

\begin{proof}\sloppy
Let $\conn \in \Sigma^k$, $x_1,\ldots,x_k \in A$ and $y \SymbDef \AlgInterp{\conn}{\AlgA}(x_1,\ldots,x_k)$.
Suppose that $\ValuationA$ is an $\MatA$-valuation
such that $\ValuationA(\bigcup_{i=1}^k \DesSeps{x_i}(\PropA_i)) \subseteq \{\dv\}$
and $\ValuationA(\bigcup_{i=1}^k \NDesSeps{x_i}(\PropA_i)) \subseteq \{\uva,\uvb\}$ (notice that a purported counter-model
for any of the schemas in Definition~\ref{def:monadiccalc}
must show this behaviour). Then, by Lemma~\ref{lem:caractvalues},
we have $v(\PropA_i) = x_i$ for all $1 \leq i \leq k$.
Now, looking at a schema of the form $\RuleA_{\conn, x_1,\ldots,x_k, T_y}$, with $y \SymbDef \AlgInterp{\conn}{\AlgA}(x_1,\ldots,x_k)$,
a counter-model for it would have $\ValuationA(T_y(\conn(\PropA_1,\ldots,\PropA_k))) \in \{\uva,\uvb\}$,
but then, by Lemma~\ref{lem:caractvalues} again,
this would mean that $v(\conn(\PropA_1,\ldots,\PropA_k)) \neq y$,
contradicting our hypothesis. The same reasoning may be
applied to the schemas of the form $\RuleA_{\conn, x_1,\ldots,x_k, F_y}$.
\end{proof}

For what follows, recall
that
we denote the 
complement of a \SetSet{} logic
$\SetSetCR{}$ by $\NSetSetCR{}$.

\begin{lemma}
\label{lem:comphelper}
Let $\SetCut \subseteq \LangSetA$
such that $\SetCut \NAnaSetSetCR{\CalcA_{\MatA}}{\AnalyticSetA} \FmSetComp{\SetCut}$. Then there is
an $\MatA$-valuation $\ValuationA$
such that $v(\SetCut) \subseteq \{\dv\}$
and $v(\FmSetComp\SetCut) \subseteq \{\uva,\uvb\}$.
\end{lemma}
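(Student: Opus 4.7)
The plan is to define a canonical valuation $\ValuationA$ from the partition $(\SetCut, \FmSetComp{\SetCut})$, and to verify via structural induction that $\ValuationA$ realizes this partition as an $\MatA$-valuation in the required sense. The non-provability hypothesis will be invoked precisely at the inductive step, producing a contradiction through a one-rule $\CalcA_{\MatA}$-derivation.

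Assume the leftmost configuration of separators in Table~\ref{tab:seps_configs} (the right configuration is treated symmetrically). On propositional variables, set $\ValuationA(\PropA) \SymbDef \dv$ if $\PropA \in \SetCut$; otherwise set $\ValuationA(\PropA) \SymbDef \uva$ if $\Sep(\PropA) \in \SetCut$, and $\ValuationA(\PropA) \SymbDef \uvb$ if $\Sep(\PropA) \in \FmSetComp{\SetCut}$; then extend $\ValuationA$ uniquely to a homomorphism from $\LangAlgA$ to $\AlgA$. The heart of the argument is the following invariant, to be proved by structural induction on $\FmA$: for every $\FmA \in \LangSetA$, $\DesSeps{\ValuationA(\FmA)}(\FmA) \subseteq \SetCut$ and $\NDesSeps{\ValuationA(\FmA)}(\FmA) \subseteq \FmSetComp{\SetCut}$. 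The base case, where $\FmA$ is a propositional variable, follows at once by inspection of the definition of $\ValuationA$ against the tabulation of $\DesSeps{x}(\PropA)$ and $\NDesSeps{x}(\PropA)$.

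For the inductive step, write $\FmA = \conn(\FmA_1, \ldots, \FmA_k)$, set $x_i \SymbDef \ValuationA(\FmA_i)$, and let $y \SymbDef \conn_{\AlgA}(x_1, \ldots, x_k) = \ValuationA(\FmA)$. The induction hypothesis yields $\DesSeps{x_i}(\FmA_i) \subseteq \SetCut$ and $\NDesSeps{x_i}(\FmA_i) \subseteq \FmSetComp{\SetCut}$ for each $i$. Suppose, towards a contradiction, that some $T_y \in \DesSeps{y}(\PropA)$ satisfies $T_y(\FmA) \in \FmSetComp{\SetCut}$. Instantiating the rule schema $\RuleA_{\conn, x_1, \ldots, x_k, T_y}$ from Definition~\ref{def:monadiccalc} via the substitution $\PropA_i \mapsto \FmA_i$ yields an inference whose antecedent $\bigcup_i \DesSeps{x_i}(\FmA_i)$ is contained in $\SetCut$ and whose succedent $\bigcup_i \NDesSeps{x_i}(\FmA_i) \cup \{T_y(\FmA)\}$ is contained in $\FmSetComp{\SetCut}$. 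Hence the one-rule $\CalcA_{\MatA}$-derivation rooted at a node labelled by this antecedent and expanded once by the given rule instance is a proof of $(\SetCut, \FmSetComp{\SetCut})$: the root is labelled by a subset of $\SetCut$, and each leaf label meets $\FmSetComp{\SetCut}$, so every leaf is $\FmSetComp{\SetCut}$-closed. Since $\SetCut \cup \FmSetComp{\SetCut} = \LangSetA$, we have $\SubfTheta{\AnalyticSetA}{\SetCut \cup \FmSetComp{\SetCut}} = \LangSetA$, so the proof is trivially $\AnalyticSetA$-analytic, contradicting $\SetCut \NAnaSetSetCR{\CalcA_{\MatA}}{\AnalyticSetA} \FmSetComp{\SetCut}$. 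A completely symmetric argument using $\RuleA_{\conn, x_1, \ldots, x_k, F_y}$ for each $F_y \in \NDesSeps{y}(\PropA)$ (assuming $F_y(\FmA) \in \SetCut$, and noting that the degenerate empty-succedent case produces a single $\Star$-labelled child that is trivially $\FmSetComp{\SetCut}$-closed) establishes $\NDesSeps{y}(\FmA) \subseteq \FmSetComp{\SetCut}$, completing the induction.

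With the invariant in hand, the conclusion follows by Lemma~\ref{lem:caractvalues}: if $\FmA \in \SetCut$, then $\ValuationA(\FmA) = \dv$, since any value in $\{\uva, \uvb\}$ would force $\FmA \in \NDesSeps{\ValuationA(\FmA)}(\FmA) \subseteq \FmSetComp{\SetCut}$; and if $\FmA \in \FmSetComp{\SetCut}$, then $\ValuationA(\FmA) \in \{\uva, \uvb\}$, since $\ValuationA(\FmA) = \dv$ would give $\FmA \in \DesSeps{\dv}(\FmA) = \{\FmA\} \subseteq \SetCut$. Thus $\ValuationA(\SetCut) \subseteq \{\dv\}$ and $\ValuationA(\FmSetComp{\SetCut}) \subseteq \{\uva, \uvb\}$, as required. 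The main delicate point lies in the inductive step: one must check that the fabricated one-step derivation genuinely qualifies as a $\CalcA_{\MatA}$-proof (correct root labelling, leaf closure, and handling of corner cases such as nullary connectives or empty succedents) and that it satisfies $\AnalyticSetA$-analyticity. Both reduce to elementary observations once one exploits that $\SetCut$ and $\FmSetComp{\SetCut}$ partition $\LangSetA$.
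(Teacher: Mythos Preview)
Your proof is correct and follows essentially the same approach as the paper's: define the canonical valuation from $\SetCut$ via the separator, establish by structural induction that $\DesSeps{\ValuationA(\FmA)}(\FmA) \subseteq \SetCut$ and $\NDesSeps{\ValuationA(\FmA)}(\FmA) \subseteq \FmSetComp{\SetCut}$, and derive the contradiction in the inductive step from the rule schemas of Definition~\ref{def:monadiccalc}. The paper states the invariant as a biconditional ($\ValuationA(\FmA) = x$ iff $\DesSeps{x}(\FmA) \subseteq \SetCut$ and $\NDesSeps{x}(\FmA) \subseteq \FmSetComp{\SetCut}$) and invokes monotonicity of $\AnaSetSetCR{\CalcA_{\MatA}}{\AnalyticSetA}$ abstractly, whereas you prove only the forward direction (which suffices) and spell out the contradicting one-rule derivation explicitly, including the empty-succedent corner case; these are cosmetic differences only.
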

\begin{proof}
Consider the mapping $\ValuationA' : P \to A$
such that
\[
\ValuationA'(\PropA) \SymbDef
\begin{cases}
    \dv & \PropA \in \SetCut\\
    \uva & \PropA \not\in \SetCut, \Sep(\PropA) \not\in \SetCut\\
    \uvb & \PropA \not\in \SetCut, \Sep(\PropA) \in \SetCut\\
\end{cases}
\]
We will prove by induction on the structure of formulas that the extension $\ValuationA$ of $\ValuationA'$
to the full language
is such that $P(\FmA) \SymbDef ``\ValuationA(\FmA) = x$ if{f}
$\DesSeps{x}(\FmA) \subseteq \SetCut$
and $\NDesSeps{x}(\FmA) \subseteq \FmSetComp\SetCut$''
for all $\FmA \in \LangSetA$.
As a consequence, since 
$\DesSeps{\dv}(\PropA) = \{\PropA\}$ and 
$\NDesSeps{\dv}(\PropA) =\EmptySet$,
we will have that $v(\FmA) = \dv$ if{f}
$\FmA \in \SetCut$, proving the present lemma.
Proceeding with the induction, then,
the base case is obvious from the definition
of $\ValuationA'$. In the inductive step,
let $\FmA \SymbDef \conn(\FmB_1,\ldots,\FmB_k)$,
for a $\conn \in \Sigma^k$, and
assume that $P(\FmB_i)$ holds for all $1 \leq i \leq k$.
From the left to the right,
suppose that $\ValuationA(\conn(\FmB_1,\ldots,\FmB_k)) = x$,
hence $\AlgInterp{\conn}{\AlgA}(\ValuationA(\FmB_1),\ldots,\ValuationA(\FmB_k)) = x$, and call $x_i \SymbDef v(\FmB_i)$.
By the induction hypothesis, we have that
$\DesSeps{x_i}(\FmB_i) \subseteq \SetCut$
and $\NDesSeps{x_i}(\FmB_i) \subseteq \FmSetComp\SetCut$,
and, by (the contrapositive of) monotonicity of $\AnaSetSetCR{\CalcA_{\MatA}}{\AnalyticSetA}$,
we have
$\bigcup_i\DesSeps{x_i}(\FmB_i) \NAnaSetSetCR{\CalcA_{\MatA}}{\AnalyticSetA} \bigcup_i\NDesSeps{x_i}(\FmB_i)$. Assume, by contradiction, that
$\FmC \in \DesSeps{x}(\FmA)\setminus\SetCut$,
thus $\bigcup_i\DesSeps{x_i}(\FmB_i) \NAnaSetSetCR{\CalcA_{\MatA}}{\AnalyticSetA} \bigcup_i\NDesSeps{x_i}(\FmB_i), \FmC$,
but this goes against the rule 
$\RuleA_{\conn, x_1,\ldots,x_k, \FmC_x}$
present in $\CalcA_\MatA$. A similar argument
applies when we assume that
$\FmC \in \NDesSeps{x}(\FmA)\setminus\FmSetComp\SetCut$.
The other direction easily follows by
monotonicity of $\AnaSetSetCR{\CalcA_{\MatA}}{\AnalyticSetA}$ again and Lemma~\ref{lem:caractvalues}.
\end{proof}

%\begin{lemma}
%\label{lem:comphelper}
%Let $\SetCut \subseteq \LangSetA$.
%If $\SetCut \NAnaSetSetCR{\CalcA_{\MatA}}{\AnalyticSetA} \FmSetComp{\SetCut}$, then
%\begin{enumerate}
%    \item for all $\FmA \in \LangSetA$,
%    there is $x \in A$ such that 
%    $\DesSeps{x}(\FmA) \subseteq \SetCut$,
%    $\NDesSeps{x}(\FmA) \subseteq \FmSetComp{\SetCut}$;
%    moreover, $x = \dv$ if{f} $\FmA \in \SetCut$.
%    \item for every $\conn \in \Sigma^k$, $\FmA = \conn(\FmB_1,\ldots,\FmB_k)$
%    and $x_1,\ldots,x_k \in V$ with
%    $\DesSeps{x_i}(\FmB_i) \subseteq \SetCut$
%    and
%    $\NDesSeps{x_i}(\FmB_i) \subseteq \FmSetComp{\SetCut}$,
%    we have that
%    $\DesSeps{y}(\FmA) \subseteq \SetCut$
%    and
%    $\NDesSeps{y}(\FmA) \subseteq \FmSetComp{\SetCut}$
%    imply
%    $y = \conn^\AlgA(x_1,\ldots,x_k)$.
%    By this same result,
%    we know that $f(\FmA) = \dv$ iff
%    $\FmA \in \SetCut$, thus
%    we have 
%\end{enumerate}
%\end{lemma}
%\begin{proof}
%\TODO{Prove.}
%\end{proof}

\begin{theorem}
\label{the:completeness}
$\CalcA_\MatA$ is a finite $\{\PropA, \Sep(\PropA)\}$-analytical axiomatization for $\MatA$.
\end{theorem}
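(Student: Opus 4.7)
Finiteness of $\CalcA_{\MatA}$ is immediate: with $|A|=3$ and $|\Sigma|$ finite, each connective $\conn \in \Sigma^k$ gives rise to at most $2\cdot 3^k$ rule schemas. Soundness is Theorem~\ref{the:soundness}, and the inclusion $\AnaSetSetCR{\CalcA_{\MatA}}{\Theta} \subseteq \SetSetCR_{\CalcA_{\MatA}}$ is automatic. The real task is therefore the contrapositive of completeness-with-$\Theta$-analyticity: from $\FmSetA \NAnaSetSetCR{\CalcA_{\MatA}}{\Theta} \FmSetB$, produce an $\MatA$-valuation $\ValuationA$ with $\ValuationA(\FmSetA) \subseteq \{\dv\}$ and $\ValuationA(\FmSetB) \cap \{\dv\} = \EmptySet$. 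Together with soundness this forces $\SetSetCR_{\MatA} = \SetSetCR_{\CalcA_{\MatA}} = \AnaSetSetCR{\CalcA_{\MatA}}{\Theta}$.

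First I would reduce to finite $\FmSetA$ and $\FmSetB$ via a routine compactness argument in the product space $\ThreeValuesSet^{P}$ (this is where the finitariness of $\SetSetCR_{\MatA}$ built into Definition~\ref{def:SetSet} and the dilution property of $\AnaSetSetCR{\CalcA_{\MatA}}{\Theta}$ come in), so that $S \SymbDef \SubfTheta{\Theta}{\FmSetA \cup \FmSetB}$ is finite. Then I would run the analytic proof-search procedure sketched earlier in this section: start from a single root node labelled with $\FmSetA$, and iteratively expand any open, non-$\FmSetB$-closed leaf $\CtxLabAOne$ using a rule instance $\RuleA^{\sigma}$ of $\CalcA_{\MatA}$ whose substituted formulas all lie in $S$, whose antecedent is contained in $\CtxLabAOne$, and whose succedent is not yet contained in $\CtxLabAOne$. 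Since $S$ is finite and each expansion strictly enlarges the label of some leaf, the procedure terminates. The assumption $\FmSetA \NAnaSetSetCR{\CalcA_{\MatA}}{\Theta} \FmSetB$ prevents all branches from being closed or discontinued, so at least one branch ends in a \emph{saturated} leaf $\Pi^{*} \subseteq S$ satisfying $\FmSetA \subseteq \Pi^{*}$, $\Pi^{*} \cap \FmSetB = \EmptySet$, and: every rule instance $\RuleA^{\sigma}$ of $\CalcA_{\MatA}$ with substituted formulas in $S$ and antecedent inside $\Pi^{*}$ has its succedent meeting $\Pi^{*}$.

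From $\Pi^{*}$ onwards, the argument would mimic the proof of Lemma~\ref{lem:comphelper}: set $\ValuationA(\PropA) \SymbDef \dv$ when $\PropA \in \Pi^{*}$, $\ValuationA(\PropA) \SymbDef \uva$ when $\PropA, \Sep(\PropA) \notin \Pi^{*}$, and $\ValuationA(\PropA) \SymbDef \uvb$ otherwise, extending arbitrarily on the atoms outside $S$. An induction on $\FmA \in S$ then establishes $\ValuationA(\FmA) = x$ iff $\DesSeps{x}(\FmA) \subseteq \Pi^{*}$ and $\NDesSeps{x}(\FmA) \cap \Pi^{*} = \EmptySet$; the inductive step applies saturation of $\Pi^{*}$ to the rule instances $\RuleA^{\sigma}_{\conn,x_1,\ldots,x_k,T_y}$ and $\RuleA^{\sigma}_{\conn,x_1,\ldots,x_k,F_y}$ under $\sigma(\PropA_i) \SymbDef \FmB_i$, each of which remains inside $S$ because $S$ is closed under subformulas and under the single unary schema $\Sep$. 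Specialising to $\FmA \in \FmSetA$ (where $x = \dv$) and $\FmA \in \FmSetB$ (where $x \in \{\uva, \uvb\}$) yields the required countermodel.

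The main obstacle is ensuring that the proof-search terminates \emph{in a saturated state} rather than merely halting: the inductive step of Lemma~\ref{lem:comphelper} will go through only provided that no $\Theta$-analytic rule instance with substituted formulas in $S$ is still applicable at $\Pi^{*}$. This hinges precisely on the closure of $S$ under the restricted substitutional patterns exercised by $\CalcA_{\MatA}$'s rules --- guaranteed by $\Theta \subseteq \LangSetAp$ being a unary schema set and by the subformula closure of $S$ --- and it is exactly the point that makes the general recipe of~\cite{marcelino2019} work. Granted this, the remaining verification is essentially bookkeeping transported from the proof of Lemma~\ref{lem:comphelper}.
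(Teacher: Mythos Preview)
Your argument is correct, but it takes a noticeably different route from the paper's. The paper's proof is very short: it applies (the contrapositive of) cut for sets to $\AnaSetSetCR{\CalcA_{\MatA}}{\Theta}$ in order to extend the pair $(\FmSetA,\FmSetB)$ to a full bipartition $(\Pi,\FmSetComp{\Pi})$ of $\LangSetA$ with $\Pi \NAnaSetSetCR{\CalcA_{\MatA}}{\Theta}\FmSetComp{\Pi}$, and then invokes Lemma~\ref{lem:comphelper} as a black box to read off the countermodel valuation. There is no finiteness reduction and no explicit proof-search; the ``saturation'' you build by hand is what the paper obtains for free from cut for sets, and the inductive verification you sketch is exactly the content of Lemma~\ref{lem:comphelper}, already proved.

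What your approach buys is self-containment and effectiveness: you do not need to argue separately that the $\Theta$-analytic derivability relation satisfies cut for sets (a point the paper takes for granted, implicitly relying on~\cite{marcelino2019}), and your terminating proof-search makes the decision procedure explicit. The cost is the extra compactness step and the relativisation of Lemma~\ref{lem:comphelper} to the finite set $S$, which the paper avoids by working with a partition of the whole language from the outset. Both arguments are valid; yours is the constructive unfolding of the paper's abstract Lindenbaum-style one-liner.
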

\begin{proof}
Finiteness of $\CalcA_\MatA$ is obvious since
$\MatA$ is a finite logical matrix.
Soundness follows 
by Theorem~\ref{the:soundness} and the fact that
$\AnaSetSetCR{\CalcA_\MatA}{\AnalyticSetA} \subseteq \SetSetCR_{\CalcA_\MatA}$.
It remains to prove completeness, i.e.,
that $\SetSetCR_\MatA \subseteq \AnaSetSetCR{\CalcA_\MatA}{\AnalyticSetA}$.
Suppose that $\FmSetA \NAnaSetSetCR{\CalcA_\MatA}{\AnalyticSetA} \FmSetB$,
then, by cut for sets, there is $\SetCut \subseteq \LangSetA$
%such that $\FmSetA \subseteq \SetCut$,
%$\FmSetB \subseteq \FmSetComp\SetCut$
%and
such that
$\SetCut, \FmSetA \NAnaSetSetCR{\CalcA_\MatA}{\AnalyticSetA} \FmSetB, \FmSetComp\SetCut$. 
Then Lemma~\ref{lem:comphelper} applies and
the provided $\MatA$-valuation makes $v(\FmSetA) \subseteq \{\dv\}$
and $v(\FmSetB) \subseteq \{\uva,\uvb\}$, as desired.
%built by letting, for each $\FmA$,
%$f(\FmA) = x$, where $x$ is the one whose existence
%was established in Lemma~\ref{lem:comphelper}~(1).
\end{proof}

It is worth emphasizing the modularity of the
proposed approach: each connective is axiomatized by its own set of rule schemas,
thus axiomatizing a matrix expanded with new connectives is a matter
of adding new rule schemas corresponding to the new connectives.
The examples to be presented in a moment will show that
Definition~\ref{def:monadiccalc} may deliver redundant and 
complex rule schemas. This fact motivates the next result,
which introduces some streamlining steps that highly simplify
the generated axiomatization.
The proof uses, in essence, the properties
of \SetSet{} logics.
In what follows, we say that a rule schema is a \emph{dilution} of another when the former can be obtained from the latter by adding formulas in the antecedent or in the succedent (recall property (D) in Definition~\ref{def:SetSet}). In that case, we may also say that the latter is a \emph{subschema} of the former.
In what follows, denote by
$\FmlasTree{\TreeA}$ the set of formulas
appearing in the $\CalcA$-derivation $\TreeA$.

\begin{theorem}
\label{the:streamlining}
Let $\CalcA$ be a $\AnalyticSetA$-analytical calculus
for a matrix $\MatA$
and consider the following streamlining subprocedure:
\begin{description}
    \item[(S1)] Remove all instances of overlap.
    \item[(S2)] Remove all rule schemas that are dilutions of other rule schemas in $\CalcA$.
    \item[(S3)] Replace occurrences of pairs of rule schemas
    $\MCRule{\FmSetA, \FmA}{\FmSetB}$
    and $\MCRule{\FmSetA}{\FmSetB, \FmA}$
    by $\MCRule{\FmSetA}{\FmSetB}$.
    \item[(S4)] Replace a rule of inference by one of its subrules that is
    %$\EmptySet$-analytically 
    derivable 
    %(i.e. derivable using only subformulas) 
    using the rules of the calculus.
    %\item[(S5)] Remove
    %a rule of inference that
    %is $\EmptySet$-analytically
    %derivable (i.e. derivable using only subformulas) from
    %the other rules of $\CalcA$.
    %\item[(S5)] Remove a rule of inference $\RuleA$ that has a $\AnalyticSetA$-analytic derivation $\TreeA$ using the others rules, such that every instance of $\RuleA$ by $\sigma$ in a derivation can be replaced by $\TreeA$ under the same $\sigma$.
    \item[(S5)] Remove a rule of inference $\RuleA \SymbDef \MCRule{\FmSetA}{\FmSetB}$ that has a proof $\TreeA$ 
    using the others rules of $\CalcA$,
    such that,
    for all $\FmSetD \subseteq \LangSetA$
    and all
    $\sigma : P \to \Subf{\FmSetD}$,
    if $\sigma(\FmSetA),\sigma(\FmSetB) \subseteq \SubfTheta{\AnalyticSetA}{\FmSetD}$, then
    $\sigma(\FmlasTree{\TreeA}) \subseteq \SubfTheta{\AnalyticSetA}{\FmSetD}$.
    
\end{description}
Then \textbf{(S1)}, \textbf{(S2)}, \textbf{(S3)}, \textbf{(S4)} 
and \textbf{(S5)}
preserve adequacy
for $\MatA$
and $\AnalyticSetA$-analyticity.
\end{theorem}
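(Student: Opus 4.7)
The plan is to verify each of the five streamlining steps independently, in each case checking (a) preservation of soundness, (b) preservation of completeness, and (c) preservation of $\AnalyticSetA$-analyticity. Since adequacy for $\MatA$ is equivalent to $\SetSetCR_{\CalcA}\,=\,\SetSetCR_{\MatA}$, it suffices to show that none of the transformations alters the consequence relation induced by the calculus, and that every $\AnalyticSetA$-analytic proof in the original calculus can be converted into one in the transformed calculus (soundness in the other direction follows because the added/retained rules are sound whenever the removed/replaced ones are).

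Steps \textbf{(S1)}--\textbf{(S4)} follow from direct appeals to the Scott conditions of Definition~\ref{def:SetSet}. For \textbf{(S1)}, every instance of overlap is admissible in any \SetSet{} logic by condition (O), so removing such rules leaves $\SetSetCR_{\CalcA}$ unchanged; occurrences in proofs can be shortcut, as an expansion by such a rule at a node whose label already contains a formula in the succedent produces at least one child with the same analytic content. For \textbf{(S2)}, a dilution of a rule $\RuleA$ of $\CalcA$ is immediately derivable from $\RuleA$ via condition (D), so it can be simulated after removal without increasing the stock of formulas used. For \textbf{(S3)}, the pair $\MCRule{\FmSetA,\FmA}{\FmSetB}$ and $\MCRule{\FmSetA}{\FmSetB,\FmA}$ together yield $\MCRule{\FmSetA}{\FmSetB}$ by cut for sets (C) applied to $\SetCut = \{\FmA\}$, while the converse direction uses (D); analyticity is preserved because the replacement uses a subset of the original formulas. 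For \textbf{(S4)}, if $\RuleA' \SymbDef \MCRule{\FmSetA'}{\FmSetB'}$ is a subrule of $\RuleA$ and derivable, then $\RuleA$ itself is a dilution of $\RuleA'$; hence replacement preserves the consequence relation, and any proof using $\RuleA$ can be converted into one using $\RuleA'$ (plus dilution) within the same analytic set.

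The main obstacle is \textbf{(S5)}, which is the only step whose justification does not factor through condition (D), (O), or (C) applied to a single inference. Here I would argue as follows. Preservation of $\SetSetCR_{\CalcA}$ is clear: $\RuleA$ is assumed to have a proof $\TreeA$ in $\CalcA \setminus \{\RuleA\}$, so every derivation using $\RuleA$ can be rewritten by substituting $\TreeA$ (appropriately instantiated) for each use of $\RuleA$. What needs care is $\AnalyticSetA$-analyticity. Given an $\AnalyticSetA$-analytic $\CalcA$-proof $\TreeA^\star$ of some $(\FmSetD,\FmSetE)$, pick any node $\NodeA$ where an instance $\RuleA^\sigma$ is applied; by analyticity of $\TreeA^\star$, the formulas in $\sigma(\FmSetA)$ and $\sigma(\FmSetB)$ already lie in $\SubfTheta{\AnalyticSetA}{\FmSetD \cup \FmSetE}$. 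The hypothesis of \textbf{(S5)} then guarantees that the whole of $\sigma(\FmlasTree{\TreeA})$ also lies in $\SubfTheta{\AnalyticSetA}{\FmSetD \cup \FmSetE}$, so we may replace the single-step expansion by $\RuleA^\sigma$ with the corresponding instance of $\TreeA$ without introducing any formula outside the analytic subformula set. Iterating this replacement over all uses of $\RuleA$ (which terminates, since each replacement strictly decreases the number of applications of $\RuleA$ in the tree) yields an $\AnalyticSetA$-analytic $(\CalcA \setminus \{\RuleA\})$-proof of $(\FmSetD,\FmSetE)$.

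Putting the five cases together establishes the theorem, and the only non-routine verification is the substitution argument for \textbf{(S5)}; all other steps are essentially bookkeeping over the Scott conditions together with the observation that the involved replacements never enlarge the set of formulas appearing in any proof.
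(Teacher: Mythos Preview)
Your proposal is correct and is precisely in the spirit of the paper, which in fact does not spell out a proof of this theorem at all but only remarks beforehand that ``the proof uses, in essence, the properties of \SetSet{} logics.'' Your case analysis via the Scott conditions (O), (D), (C) for \textbf{(S1)}--\textbf{(S4)} together with the graft-the-derivation substitution argument for \textbf{(S5)} is exactly the kind of verification that remark points to, so you have supplied the details the paper leaves implicit.
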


In order to use a terminology
that is closer to that
used in 3-labelled calculi,
if $\LName$ is determined
by $\MatA$, then 
we will sometimes denote $\CalcA_\MatA$
by $\smName{\LName}$
and the streamlined version
of this calculus by
$\smcName{\LName}$.
We now exemplify the axiomatization procedure and the streamlining procedures just described, producing \SetSet{} axiomatizations for the monadic matrices among those three-valued matrices introduced in
the beginning of the present work. Assume that $\dv = \tv$, $\uva = \uv$ and $\uvb = \bv$ for the next examples.

\begin{example}
\label{ex:lukneg}
Consider the three-valued matrix with $\{\tv\}$ as the set of designated values over a signature containing only
$\neg$, %$\land$, $\lor$, 
interpreted as $\lukNeg$.
Notice that the configuration
at the left of Table~\ref{tab:lukdisc}
applies in this case. According to Definition~\ref{def:monadiccalc}, then,
the following rule schemas axiomatize $\neg$ in this matrix:
\begin{gather*}
\RuleA_{\neg, \tv, \neg\PropA} = \MCRule{\PropA}{\neg\neg\PropA} \quad
\RuleA_{\neg, \tv, \PropA} = \MCRule{\PropA, \neg\PropA}{}
\quad
\RuleA_{\neg, \uv, \PropA} = \MCRule{\neg\PropA}{\PropA, \neg\PropA}
\quad
\RuleA_{\neg, \uv, \neg\PropA} = \MCRule{\neg\neg\PropA}{\PropA, \neg\PropA}
\quad
\RuleA_{\neg, \bv, \PropA} = \MCRule{\neg\PropA}{\PropA,\neg\PropA}
\end{gather*}
By Theorem~\ref{the:streamlining}, we may remove schemas
$\RuleA_{\neg, \uv, \PropA}$ and 
$\RuleA_{\neg, \bv, \PropA}$ as they are instances of overlap,
as well as replace $\RuleA_{\neg, \uv, \neg\PropA}$
with $\MCRule{\neg\neg\PropA}{\PropA}$, as the latter is
a subschema of the former easily derivable from 
$\RuleA_{\neg, \uv, \neg\PropA}$ itself
and $\RuleA_{\neg, \tv, \PropA}$. The simplified axiomatization 
for $\lukNeg$, then, contains the following rule schemas:
\begin{gather*}
\MCRule{\PropA}{\neg\neg\PropA}\RuleA^{\lukNeg}_{1} \quad
\MCRule{\neg\neg\PropA}{\PropA}\RuleA^{\lukNeg}_{2} \quad
\MCRule{\PropA, \neg\PropA}{}\RuleA^{\lukNeg}_{3}
\end{gather*}
\end{example}

\begin{example}
\label{ex:minmaxaxiomat}
Now consider the matrix of Example~\ref{ex:lukneg}
expanded with connectives $\land$ and $\lor$, interpreted
respectively as $\lukConj$ and $\lukDisj$.
Let us axiomatize $\land$ in detail.
By employing Definition~\ref{def:monadiccalc}, we produce
the following rule schemas:
\begin{gather*}
%%%%%%%% axiomatize row A=1
\RuleA_{\land, \tv, \tv, \PropA} = \MCRule{\PropA, \PropB}{\PropA\land\PropB}
\quad
\RuleA_{\land, \tv, \uv, \PropA} = \MCRule{\PropA,\PropA\land\PropB}{\PropB,\neg\PropB}
\quad
\RuleA_{\land, \tv, \uv, \neg\PropA} = \MCRule{\PropA,\neg(\PropA\land\PropB)}{\PropB,\neg\PropB}\\
\RuleA_{\land, \tv, \bv, \PropA} = \MCRule{\PropA,\neg\PropB,\PropA\land\PropB}{\PropB}
\quad
\RuleA_{\land, \tv, \bv, \neg\PropA} = \MCRule{\PropA,\neg\PropB}{\PropB,\neg(\PropA\land\PropB)}
\quad
%%%%%%%% axiomatize row A=1/2
\RuleA_{\land, \uv, \tv, \PropA} = \MCRule{\PropB,\PropA\land\PropB}{\PropA,\neg\PropA}\\
\RuleA_{\land, \uv, \tv, \neg\PropA} = \MCRule{\PropB,\neg(\PropA\land\PropB)}{\PropA,\neg\PropA}
\quad
\RuleA_{\land, \uv, \uv, \PropA} = \MCRule{\PropA\land\PropB}{\PropA,\neg\PropA,\PropB,\neg\PropB}
\quad
\RuleA_{\land, \uv, \uv, \neg\PropA} = \MCRule{\neg(\PropA\land\PropB)}{\PropA,\neg\PropA,\PropB,\neg\PropB}\\
\RuleA_{\land, \uv, \bv, \PropA} = \MCRule{\neg\PropB,\PropA\land\PropB}{\PropA,\neg\PropA,\PropB}
\quad
\RuleA_{\land, \uv, \bv, \neg\PropA} = \MCRule{\neg\PropB}{\PropA,\neg\PropA,\PropB,\neg(\PropA\land\PropB)}
%%%%%%%% axiomatize row A=0
\quad
\RuleA_{\land, \bv, \tv, \neg\PropA} = \MCRule{\neg\PropA, \PropB}{\PropA,\neg(\PropA\land\PropB)}\\
\RuleA_{\land, \bv, \tv, \PropA} = \MCRule{\neg\PropA, \PropB, \PropA\land\PropB}{\PropA}
\quad
\RuleA_{\land, \bv, \uv, \neg\PropA} = \MCRule{\neg\PropA}{\PropA,\PropB,\neg\PropB, \neg(\PropA\land\PropB)}
\quad
\RuleA_{\land, \bv, \uv, \PropA} = \MCRule{\neg\PropA,\PropA\land\PropB}{\PropA,\PropB,\neg\PropB}\\
\RuleA_{\land, \bv, \bv, \neg\PropA} = \MCRule{\neg\PropA,\neg\PropB}{\PropA,\PropB,\neg(\PropA\land\PropB)}
\quad
\RuleA_{\land, \bv, \bv, \PropA} = \MCRule{\neg\PropA,\neg\PropB,\PropA\land\PropB}{\PropA,\PropB}
\end{gather*}

\noindent Over these schemas, we apply
Theorem~\ref{the:streamlining}
to obtain a much shorter version of this calculus:
\begin{gather*}
%%%%%%%% axiomatize row A=1
\MCRule{\PropA, \PropB}{\PropA\land\PropB}\RuleA^{\lukConj}_{1}
\quad
\MCRule{\PropA\land\PropB}{\PropA}\RuleA^{\lukConj}_{2}
\quad
\MCRule{\PropA\land\PropB}{\PropB}\RuleA^{\lukConj}_{3}
\quad
\MCRule{\neg(\PropA\land\PropB)}{\neg\PropA,\neg\PropB}\RuleA^{\lukConj}_{4}
\quad
\MCRule{\neg\PropA}{\neg(\PropA\land\PropB)}\RuleA^{\lukConj}_{5}
\quad
\MCRule{\neg\PropB}{\neg(\PropA\land\PropB)}\RuleA^{\lukConj}_{6}
\end{gather*}
\noindent We may then follow the
same procedure for axiomatizing $\lor$,
and the result will be:
\begin{gather*}
%%%%%%%% axiomatize row A=1
\MCRule{\PropA}{\PropA\lor\PropB}\RuleA^{\lukDisj}_{1}
\quad
\MCRule{\PropB}{\PropA\lor\PropB}\RuleA^{\lukDisj}_{2}
\quad
\MCRule{\PropA\lor\PropB}{\PropA,\PropB}\RuleA^{\lukDisj}_{3}
\quad
\MCRule{\neg\PropA,\neg\PropB}{\neg(\PropA\lor\PropB)}\RuleA^{\lukDisj}_{4}
\quad
\MCRule{\neg(\PropA\lor\PropB)}{\neg\PropA}\RuleA^{\lukDisj}_{5}
\quad
\MCRule{\neg(\PropA\lor\PropB)}{\neg\PropB}\RuleA^{\lukDisj}_{6}
\end{gather*}
\end{example}

With axiomatizations of $\lukNeg$, $\lukConj$
and $\lukDisj$ in hand, it is straighforward now
to produce a calculus for the logics $\LThreeName$
and $\SKName$: we just need to axiomatize
$\lukImp$ and $\KleeImp$, which
we do more briefly in the next example.

\begin{example}
According to Definition~\ref{def:monadiccalc}
and the streamlining procedures proposed in
Theorem~\ref{the:streamlining},
$\lukImp$ is axiomatized by the following
rule schemas:
\begin{gather*}
%%%%%%%% axiomatize row A=1
\MCRule{\neg\PropA}{\PropA\to\PropB}\RuleA^{\lukImp}_{1}
\;
\MCRule{\PropB}{\PropA\to\PropB}\RuleA^{\lukImp}_{2}
\;
\MCRule{}{\PropA\to\PropB,\neg\PropB,\PropA}\RuleA^{\lukImp}_{3}
\;
\MCRule{\neg\PropB,\PropA}{\neg(\PropA\to\PropB)}\RuleA^{\lukImp}_{4}\\
\MCRule{\PropA\to\PropB,\neg\PropB}{\neg\PropA}\RuleA^{\lukImp}_{5}
\;
\MCRule{\neg(\PropA\to\PropB)}{\neg\PropB}\RuleA^{\lukImp}_{6}
\;
\MCRule{\neg(\PropA\to\PropB)}{\PropA}\RuleA^{\lukImp}_{7}
\;
\MCRule{\PropA,\PropA\to\PropB}{\PropB}\RuleA^{\lukImp}_{8}
\end{gather*}
In turn, $\KleeImp$ is axiomatized
by:
\begin{gather*}
%%%%%%%% axiomatize row A=1
\MCRule{\neg\PropA}{\PropA\to\PropB}\RuleA^{\KleeImp}_{1}
\;
\MCRule{\PropB}{\PropA\to\PropB}\RuleA^{\KleeImp}_{2}
\;
\MCRule{\neg\PropB, \PropA}{\neg(\PropA\to\PropB)}\RuleA^{\KleeImp}_{3}
\;
\MCRule{\PropA\to\PropB}{\neg\PropA,\PropB}\RuleA^{\KleeImp}_{4}
\;
\MCRule{\neg(\PropA\to\PropB)}{\neg\PropB}\RuleA^{\KleeImp}_{5}
\;
\MCRule{\neg(\PropA\to\PropB)}{\PropA}\RuleA^{\KleeImp}_{6}
\end{gather*}

\noindent Notice that, in this case, instead of 
$\neg\PropA$, we could have
used $\PropA\to\PropA$ to
separate the values
$\bv$ and $\uv$, since
$\bv \KleeImp \bv = \tv$ is designated
and $\uv \KleeImp \uv = \uv$ is non-designated. The produced rules,
after simplification, would be
the following:
\begin{gather*}
    \MCRule{\PropB\to\PropB, \PropA}{(\PropA\to\PropB)\to(\PropA\to\PropB)}
    \quad
    \MCRule{\PropA\to\PropB}{\PropA\to\PropA, \PropB}
    \quad
    \MCRule{\PropB}{\PropA\to\PropB}\\
    \MCRule{(\PropA\to\PropB)\to(\PropA\to\PropB)}{\PropA\to\PropB,\PropB\to\PropB}
    \quad
    \MCRule{(\PropA\to\PropB)\to(\PropA\to\PropB)}{\PropA\to\PropB,\PropA}
    \MCRule{\PropA\to\PropA}{\PropA\to\PropB, \PropA}\quad
    \MCRule{\PropA\to\PropB,\PropA}{\PropB}
\end{gather*}
\end{example}

\begin{example}
Consider now $\HNeg$, which changes the
output under $\uv$ to $\bv$ when compared
with $\lukNeg$. Definition~\ref{def:monadiccalc}
produces the following rules schemas:
\begin{gather*}
\RuleA_{\neg, \tv, \neg\PropA} = \MCRule{\PropA}{\neg\neg\PropA} 
\;
\RuleA_{\neg, \tv, \PropA} = \MCRule{\PropA, \neg\PropA}{}
\;
\RuleA_{\neg, \uv, \PropA} = \MCRule{\neg\PropA}{\PropA, \neg\PropA}
\;
\RuleA_{\neg, \uv, \neg\PropA} = \MCRule{}{\PropA, \neg\PropA,\neg\neg\PropA}
\;
\RuleA_{\neg, \bv, \PropA} = \MCRule{\neg\PropA}{\PropA,\neg\PropA}
\end{gather*}
Notice that $\RuleA_{\neg, \uv, \PropA}$
and $\RuleA_{\neg, \bv, \PropA}$
may be removed as they are instances
of overlap. Also, 
$\MCRule{}{\neg\PropA,\neg\neg\PropA}$
is derivable from the other rules,
so we may use it instead of $\RuleA_{\neg, \uv, \neg\PropA}$. Finally, in this new calculus,
we may derive $\RuleA_{\neg, \tv, \neg\PropA}$,
so it can be removed. The final
axiomatization for $\HNeg$ is then
\begin{gather*}
\MCRule{\PropA, \neg\PropA}{}\RuleA^{\HNeg}_{1}
\quad
\MCRule{}{\neg\PropA,\neg\neg\PropA}\RuleA^{\HNeg}_{2}
\end{gather*}
\end{example}

We have seen many examples of axiomatizations
for three-valued logical matrices having
a single designated value. Now we indicate how to extend our approach to matrices with
two designated values.
Let $\MatA_1 = \langle \{\dv,\uva,\uvb\}, \{\uva,\uvb\} \rangle$. We may obtain a matrix $\MatA_2$ with a single
designated value from $\MatA_1$ by simply
taking the set $\{\dv\}$ as designated:
$\MatA_2 = \langle \{\dv,\uva,\uvb\}, \{\dv\} \rangle$. We know then that, if $\MatA_2$ is monadic, there are quasi-partitions $(\DesSeps{x}^2(\PropA),\NDesSeps{x}^2(\PropA))$ 
for each $x$ and that
$v(\FmA) = x \text{ if{f} }
v(\DesSeps{x}^2(\FmA)) \subseteq \{\dv\} \text{ and }
v(\NDesSeps{x}^2(\FmA)) \subseteq \{\uva, \uvb\}$,
from which, by Definition~\ref{def:monadiccalc},
we obtain an adequate $\{\PropA,\Sep(\PropA)\}$-analytic calculus $\CalcA_2$ for $\MatA_2$.
If we take $(\DesSeps{x}^1(\PropA),\NDesSeps{x}^1(\PropA))$ for each $x$ such that
$\DesSeps{x}^1(\PropA) \SymbDef \NDesSeps{x}^2(\PropA)$
and $\NDesSeps{x}^1(\PropA) \SymbDef \DesSeps{x}^2(\PropA)$,
then
$v(\FmA) = x \text{ if{f} }
v(\DesSeps{x}^1(\FmA)) \subseteq \{\uva, \uvb\}  \text{ and }
v(\NDesSeps{x}^1(\FmA)) \subseteq \{\dv\}$,
thus we have separators for $\MatA_1$.
We may use them in Definition~\ref{def:monadiccalc}
to build a $\{\PropA, \Sep(\PropA)\}$-analytical  calculus $\CalcA_1$ which can be easily
proved to be sound and complete with respect
to $\MatA_1$ by simply adapting
the proofs of Lemma~\ref{lem:comphelper} and Theorem~\ref{the:completeness}.
But we can easily see that the schemas in
$\CalcA_1$ are just the schemas in $\CalcA_2$
turned upside down, and this is how our approach
simply extends to the case of two designated values.

\begin{example}
Consider the three-valued matrix for the Logic of Paradox ($\LPName$),
which has the same signature and interpretation structure as
the logic $\SKName$, but, instead of having $\{\tv\}$
as set of designated values, it has
$\{\tv,\uv\}$. According to what we have presented
above, we may produce an axiomatization
for the matrix with the same interpretations
but with set of designated values
being $\{\bv\}$,
following Definition~\ref{def:monadiccalc}
and Theorem~\ref{the:streamlining},
and then simply turn the rules upside down to axiomatize $\LPName$.
\end{example}

\begin{example}\label{ex:postaxiomat}
    Post's logic $\PostName$ is axiomatized by
    the rules for $\land$ and $\lor$
    in $\LThreeName$
    (see~Example \ref{ex:minmaxaxiomat}) together with
    the following rules for $\neg$,
    again produced using the described
    procedure and simplifications:
    \[
\MCRule{}{\neg\neg\PropA,\neg\PropA,\PropA}\RuleA^{\postNeg}_{1}
\qquad
\MCRule{\neg\neg\PropA,\PropA}{}\RuleA^{\postNeg}_{2}
\qquad
\MCRule{\neg\PropA,\PropA}{}\RuleA^{\postNeg}_{3}
\]
\end{example}

\begin{remark}
The recipe we have described and exemplified throughout
this section is a simplification 
of the general method for axiomatizing
finite monadic matrices given in~\cite{shoesmithsmiley1978},
which was recently generalized
to finite monadic
partial non-deterministic logical matrices in~\cite{marcelinowollic19}.
%\TODO{We could say a little about the applicability of this
%procedure for PN-matrices, giving the general form of the
%rule schemas and pointing to the paper of Sérgio.}
\end{remark}

\subsection{Axiomatization of non-monadic matrices}
\label{sec:nonmonadic}

As we have been insisting on, non-monadic deterministic logical matrices can always be axiomatized in \SetSet{}~\cite[Theorem 19.12]{shoesmithsmiley1978} using the original language, although
not via our recipe. 
There is no known algorithm for this general case.
By employing separators with contexts~\cite[Section 19.5]{shoesmithsmiley1978}
in an \textit{ad hoc} manner, however, we managed to show \SetSet{} $\Theta$-analytic axiomatizations --- now allowing $\Theta$
to contain formulas of arities other than 1, for instance $\PropA\lor\PropB$
and $\PropA\to\PropB$ --- for fragments of some of the
logics presented in Section~\ref{sec:CaseStudies}.
We present them in the sequel.

\begin{remark}
We should mention that our recipe for monadic matrices can always be applied to non-monadic matrices
provided that the object language is extended with connectives guaranteeing the separation of all truth values. The resulting matrix is then monadic
and the logic
is a conservative extension of the original logic~\cite{marcelino2019}.
\end{remark}

\subsubsection{Disjunctive fragment of Bochvar-Kleene's logic}

Let $\SetSetCR_{\BKName}^{\lor}$ be the \SetSet{} logic determined by
the three-valued logical matrix $\MatA_{\BKName}^\lor$ whose signature contains only $\lor$, interpreted as $\BKleeDisj$ in this case --- the truth table of which we 
repeat below for convenience --- and whose set
of designated values is $\{\tv\}$. The reader can easily verify
that this matrix is not monadic by using the fact that the sets $\{\bv\}$ and 
$\{\uv\}$ are both closed under $\BKleeDisj$. Although our previous
recipe does not apply in this scenario, we are still able
to provide an analytic axiomatization for this matrix.

\begin{table}[H]
    \centering
    \begin{tabular}{c|ccc}
        \toprule
         \BKleeDisj & \bv & \uv & \tv\\
        \midrule
         \bv & \bv & \uv & \tv\\
         \uv & \uv & \uv & \uv \\
         \tv & \tv & \uv & \tv\\
        \bottomrule
    \end{tabular}
\end{table}

\begin{definition}
Let $\CalcA^\lor_{\BKName}$ be the \SetSet{} calculus 
whose rule schemas are the following:
%(cut:r11,(cut:r13,r15)) (sub):=[ (p1 or p2) or q1,q1 | p1 or q1 ]
%(cut:r9,(cut:r3,r15)) (sub):=[ (p1 or p2) or q1,q1 | p2 or q1 ]
\begin{gather*}
    \MCRule{\PropA \lor \PropB}{\PropA,\PropB}\RuleA^{\lor,1}_{\BKName}
    \quad
    \MCRule{\PropA,\PropB}{\PropA \lor \PropB}\RuleA^{\lor,2}_{\BKName}\\
    \MCRule{\PropA_1\lor\PropB_1, \PropA_2}{\PropA_1\lor\PropA_2}\RuleA^{\lor,3}_{\BKName}
    \quad
    \MCRule{\PropA_2\lor\PropB_1, \PropA_1}{\PropA_1\lor\PropA_2}\RuleA^{\lor,4}_{\BKName}
    \quad
    \MCRule{\PropA_1\lor\PropA_2,\PropB_1}{\PropA_2\lor\PropB_1}\RuleA^{\lor,5}_{\BKName}\\
    \MCRule{\PropA_1\lor\PropB_1,\PropA_2\lor\PropB_1}{(\PropA_1\lor\PropA_2)\lor\PropB_1,\PropA_1}\RuleA^{\lor,6}_{\BKName}
    \quad
    \MCRule{(\PropA_1\lor\PropA_2)\lor\PropB_1,\PropB_1}{\PropA_1\lor\PropB_1}\RuleA^{\lor,7}_{\BKName}
    \quad
    \MCRule{(\PropA_1\lor\PropA_2)\lor\PropB_1,\PropB_1}{\PropA_2\lor\PropB_1}\RuleA^{\lor,8}_{\BKName}
\end{gather*}
\end{definition}

\begin{theorem}
$\CalcA^\lor_{\BKName}$ is sound for $\MatA_{\BKName}^\lor$.
\end{theorem}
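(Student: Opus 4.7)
The plan is to verify soundness rule by rule, since soundness of a \SetSet{} calculus reduces to soundness of each rule schema: for every $\MatA^\lor_{\BKName}$-valuation $v$ and every instance $\MCRule{\FmSetA}{\FmSetB}$ of a rule schema of $\CalcA^\lor_{\BKName}$, if $v(\FmSetA) \subseteq \{\tv\}$ then $v(\FmSetB) \cap \{\tv\} \neq \EmptySet$. Once every rule schema is sound, closure under the derivation-building operations guarantees $\SetSetCR_{\CalcA^\lor_{\BKName}} \subseteq \SetSetCR_{\MatA^\lor_{\BKName}}$.

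The key observation driving all the case analyses is a simple fact about $\BKleeDisj$: the value $\uv$ is \emph{infectious}, so for any $x,y \in \ThreeValuesSet$,
\[
x \BKleeDisj y = \tv \text{ iff } x,y \in \{\bv,\tv\} \text{ and at least one of } x,y \text{ equals } \tv,
\]
\[
x \BKleeDisj y \in \{\bv,\tv\} \text{ iff } x,y \in \{\bv,\tv\}.
\]
From this, $\RuleA^{\lor,1}_{\BKName}$ and $\RuleA^{\lor,2}_{\BKName}$ are immediate. For $\RuleA^{\lor,3}_{\BKName}$, assuming $v(\PropA_1 \lor \PropB_1)=\tv$ and $v(\PropA_2)=\tv$, the first premise forces $v(\PropA_1) \in \{\bv,\tv\}$, so $v(\PropA_1) \BKleeDisj \tv = \tv$; $\RuleA^{\lor,4}_{\BKName}$ and $\RuleA^{\lor,5}_{\BKName}$ follow by the same reasoning.

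Rules $\RuleA^{\lor,7}_{\BKName}$ and $\RuleA^{\lor,8}_{\BKName}$ exploit the second equivalence: the premise $v((\PropA_1\lor\PropA_2)\lor\PropB_1)=\tv$ forces $v(\PropA_1\lor\PropA_2)\in\{\bv,\tv\}$, hence $v(\PropA_1), v(\PropA_2)\in\{\bv,\tv\}$, so disjoining with $v(\PropB_1)=\tv$ yields $\tv$. The main subtlety is $\RuleA^{\lor,6}_{\BKName}$, which I expect to be the trickiest: one argues by contradiction, assuming both premises take value $\tv$ while $v(\PropA_1)\neq\tv$ and $v((\PropA_1\lor\PropA_2)\lor\PropB_1)\neq\tv$. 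The first premise then forces $v(\PropA_1)=\bv$ and $v(\PropB_1)=\tv$, and the second forces $v(\PropA_2)\in\{\bv,\tv\}$; but then $v(\PropA_1\lor\PropA_2)=\bv\BKleeDisj v(\PropA_2)\in\{\bv,\tv\}$ and disjoining with $v(\PropB_1)=\tv$ gives $\tv$, contradicting the assumption on the succedent. This completes the verification of all eight schemas.
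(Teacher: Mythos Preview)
Your proposal is correct and is precisely the routine rule-by-rule verification that the paper's one-line proof (``This is routine.'') leaves to the reader. Your two observations about $\BKleeDisj$ (infectiousness of $\uv$, and the characterization of when the output is $\tv$ or lies in $\{\bv,\tv\}$) are exactly the right tools, and the case analysis for each of the eight schemas, including the contradiction argument for $\RuleA^{\lor,6}_{\BKName}$, goes through as you describe.
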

\begin{proof}
This is routine.
\end{proof}

\begin{lemma}
Some derivable rules:
\[
    \MCRule{\PropA_1\lor\PropA_2,\PropB_1}{\PropA_1\lor\PropB_1}\RuleA^{\lor,9}_{\BKName}\\
    \quad
    \MCRule{\PropA_1\lor\PropA_2}{\PropA_2\lor\PropA_1}\RuleA^{\lor,10}_{\BKName}\\
\]
\end{lemma}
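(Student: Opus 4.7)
The plan is to produce explicit $\CalcA^\lor_{\BKName}$-derivations whose roots are labelled by the antecedents and which are closed with respect to the corresponding succedents of the two rule schemas in question.

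For $\RuleA^{\lor,9}_{\BKName}$, an unbranched derivation suffices. Starting from the node labelled $\{\PropA_1\lor\PropA_2,\,\PropB_1\}$, first apply $\RuleA^{\lor,2}_{\BKName}$ under the substitution $\PropA\mapsto\PropA_1\lor\PropA_2$, $\PropB\mapsto\PropB_1$, which extends the node by the compound formula $(\PropA_1\lor\PropA_2)\lor\PropB_1$. Now apply $\RuleA^{\lor,7}_{\BKName}$ (identity substitution) to the pair $(\PropA_1\lor\PropA_2)\lor\PropB_1$ and $\PropB_1$, which introduces $\PropA_1\lor\PropB_1$ and closes the branch.

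For $\RuleA^{\lor,10}_{\BKName}$, the plan is to use a single branching followed by one rule application per branch. From the singleton $\{\PropA_1\lor\PropA_2\}$, apply $\RuleA^{\lor,1}_{\BKName}$ to split into an $\PropA_1$-branch and an $\PropA_2$-branch. In the $\PropA_1$-branch, apply $\RuleA^{\lor,5}_{\BKName}$ under the substitution sending the rule's $\PropA_1,\PropA_2,\PropB_1$ to $\PropA_1,\PropA_2,\PropA_1$ respectively; both its premises ($\PropA_1\lor\PropA_2$ and $\PropA_1$) are then present, and the conclusion is $\PropA_2\lor\PropA_1$. In the $\PropA_2$-branch, dually, apply $\RuleA^{\lor,4}_{\BKName}$ under the substitution sending the rule's $\PropA_1,\PropA_2,\PropB_1$ to $\PropA_2,\PropA_1,\PropA_2$; both its premises ($\PropA_1\lor\PropA_2$ and $\PropA_2$) are present, and the conclusion is again $\PropA_2\lor\PropA_1$. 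Both branches thereby close.

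No serious obstacle is expected. The only point worth flagging is the choice, in the $\PropA_2$-branch, to deploy $\RuleA^{\lor,4}_{\BKName}$ rather than the seemingly more natural $\RuleA^{\lor,5}_{\BKName}$: the two schemas together essentially encode the commutativity of $\lor$, with each being applicable in exactly one of the two post-branching branches depending on which of $\PropA_1$ or $\PropA_2$ is the available witness.
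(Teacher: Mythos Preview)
Your derivations are correct. The paper states this lemma without proof, so there is nothing to compare against; your argument fills the gap cleanly, and the substitutions you spell out for $\RuleA^{\lor,4}_{\BKName}$ and $\RuleA^{\lor,5}_{\BKName}$ in the commutativity derivation are exactly right.
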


\begin{theorem}
$\CalcA^\lor_{\BKName}$ is complete for $\MatA_{\BKName}^\lor$
and $\{\PropA \lor \PropB\}$-analytic.
\end{theorem}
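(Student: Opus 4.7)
The strategy mirrors that of Theorem~\ref{the:completeness}, proceeding by contraposition, with the key novelty that, since $\MatA_{\BKName}^\lor$ is non-monadic, the $\bv/\uv$ distinction must be recovered contextually, exploiting the fact that $\uv$ is infectious under $\BKleeDisj$ while $\bv$ is not---this is precisely what the analyticity parameter $\{\PropA \lor \PropB\}$ buys us. Assume $\FmSetA \NAnaSetSetCR{\CalcA^\lor_{\BKName}}{\{\PropA\lor\PropB\}} \FmSetB$ and set $T \SymbDef \SubfTheta{\{\PropA\lor\PropB\}}{\FmSetA \cup \FmSetB} = \Subf{\FmSetA\cup\FmSetB} \cup \{\FmC\lor\FmD : \FmC,\FmD\in\Subf{\FmSetA\cup\FmSetB}\}$. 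By the $T$-restricted cut-for-sets and a Lindenbaum-style saturation, fix $\Pi \subseteq T$ \emph{maximal} with $\FmSetA \subseteq \Pi$, $\FmSetB \cap \Pi = \EmptySet$, and $\Pi \NAnaSetSetCR{\CalcA^\lor_{\BKName}}{\{\PropA\lor\PropB\}} T \setminus \Pi$. Maximality yields closure of $\Pi$ under rules of $\CalcA^\lor_{\BKName}$ instantiated within $T$: any rule instance with all premises in $\Pi$ and conclusion in $T$ has some succedent formula in $\Pi$.

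Next, define a $\MatA_{\BKName}^\lor$-valuation $\ValuationA$ on propositional variables by setting $\ValuationA(\PropA) \SymbDef \tv$ if $\PropA \in \Pi$; $\ValuationA(\PropA) \SymbDef \bv$ if $\PropA \notin \Pi$ but $\PropA\lor\FmC \in \Pi$ or $\FmC\lor\PropA \in \Pi$ for some $\FmC \in \Subf{\FmSetA\cup\FmSetB}$; and $\ValuationA(\PropA) \SymbDef \uv$ otherwise (with variables outside $\FmSetA\cup\FmSetB$ set arbitrarily). Extend $\ValuationA$ homomorphically. The heart of the proof is the characterisation, established by induction on $\FmA \in \Subf{\FmSetA\cup\FmSetB}$, that (i) $\ValuationA(\FmA) = \tv$ iff $\FmA \in \Pi$; (ii) $\ValuationA(\FmA) = \uv$ iff $\FmA \notin \Pi$ and, for every $\FmC \in \Subf{\FmSetA\cup\FmSetB}$, $\FmA\lor\FmC \notin \Pi$ and $\FmC\lor\FmA \notin \Pi$; and (iii) $\ValuationA(\FmA) = \bv$ iff $\FmA \notin \Pi$ with at least one such $\FmC$ witnessing non-infectiousness. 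The base case is immediate; the inductive step $\FmA = \FmB\lor\FmC$ reduces to a multi-way split on $(\ValuationA(\FmB),\ValuationA(\FmC))$ discharged as follows: rules $\RuleA^{\lor,1}_{\BKName}$ and $\RuleA^{\lor,2}_{\BKName}$ handle the fully-designated and fully-undesignated diagonals; rule $\RuleA^{\lor,5}_{\BKName}$ precludes $\FmB\lor\FmC \in \Pi$ when one disjunct is $\uv$ (by deriving a forbidden $\FmC\lor\FmE \in \Pi$); rules $\RuleA^{\lor,3}_{\BKName}$ and $\RuleA^{\lor,4}_{\BKName}$ allow one to transfer non-infectious witnesses so as to deduce membership of $\FmB\lor\FmC$ in $\Pi$ in the mixed $\tv$-$\bv$ case; and the triply-associated rules $\RuleA^{\lor,6}_{\BKName}$, $\RuleA^{\lor,7}_{\BKName}$, $\RuleA^{\lor,8}_{\BKName}$, applied together with $\RuleA^{\lor,1}_{\BKName}$, propagate infectiousness and non-infectiousness through the compound. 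Once (i)--(iii) hold, $\ValuationA$ satisfies $\ValuationA(\FmSetA) \subseteq \{\tv\}$ and $\ValuationA(\FmSetB) \cap \{\tv\} = \EmptySet$, so $\FmSetA \not\SetSetCR_{\MatA_{\BKName}^\lor} \FmSetB$, giving completeness. Analyticity follows because every rule instance invoked in the argument lies within $T$, and the compounds $\FmA\lor\FmC$ used as witnesses belong to $T$ by construction.

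The main technical obstacle is the inductive step at $\FmB\lor\FmC$ in the $\bv$-$\bv$ and mixed-$\uv$ cases: in the former, one must \emph{unify} witnesses $\FmE_B,\FmE_C \in \Pi$ (observing, via the IH and the semantics of $\BKleeDisj$, that witnesses necessarily have value $\tv$) into a single $\FmE \in \Pi$ such that $\FmB\lor\FmE,\FmC\lor\FmE \in \Pi$ (by $\RuleA^{\lor,3}_{\BKName}$), and then apply $\RuleA^{\lor,6}_{\BKName}$ to obtain $(\FmB\lor\FmC)\lor\FmE \in \Pi$; in the latter, symmetric applications of $\RuleA^{\lor,7}_{\BKName}$ or $\RuleA^{\lor,8}_{\BKName}$ exclude every candidate witness for non-infectiousness of the compound. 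The triply-associated instance $(\FmB\lor\FmC)\lor\FmE$ lies in $T$ precisely because $\FmB\lor\FmC$ is itself a subformula, which is the exact bridge by which $\{\PropA\lor\PropB\}$-analyticity accommodates the non-monadic semantics of $\MatA_{\BKName}^\lor$ within a finite calculus.
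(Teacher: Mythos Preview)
Your approach is correct but differs from the paper's in an interesting way. The paper does \emph{not} use an existential quantification over witnesses to distinguish $\bv$ from $\uv$. Instead, it performs a global case split on the partition $(\TPart,\FPart)$: either (a) every pair $\FmA\in\TPart$, $\FmB\in\FPart$ satisfies $\FmA\lor\FmB\in\TPart$, in which case the countermodel uses only the two values $\{\tv,\bv\}$; or (b) there is a single fixed pair $\FmB_\tv\in\TPart$, $\FmB_\uv\in\FPart$ with $\FmB_\tv\lor\FmB_\uv\in\FPart$, and then $\FmB_\tv$ alone serves as the test formula---one sets $v(\FmA)=\uv$ iff $\FmA,\FmA\lor\FmB_\tv\in\FPart$, and $v(\FmA)=\bv$ iff $\FmA\in\FPart$ but $\FmA\lor\FmB_\tv\in\TPart$. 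This sidesteps your witness-unification step entirely: in the $\bv$--$\bv$ case the paper simply has $\FmB\lor\FmB_\tv,\FmC\lor\FmB_\tv\in\TPart$ already aligned, and one application of $\RuleA^{\lor,6}_{\BKName}$ finishes. Your uniform definition is conceptually cleaner (no global dichotomy) at the cost of more work per case; the paper's fixed-witness trick is the standard ``separator with context'' technique alluded to in Section~\ref{sec:nonmonadic}.

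One minor wrinkle in your write-up: the parenthetical ``observing, via the IH and the semantics of $\BKleeDisj$, that witnesses necessarily have value $\tv$'' is not justified as stated, since a witness $\FmE\in\Subf{\FmSetA\cup\FmSetB}$ may be structurally larger than the formula $\FmB\lor\FmC$ under induction, so the IH does not apply to it. Fortunately this observation is unnecessary: the unification only needs $\FmE_C\in\Pi$ (obtained from $\RuleA^{\lor,1}_{\BKName}$) to fire $\RuleA^{\lor,3}_{\BKName}$ and then $\RuleA^{\lor,6}_{\BKName}$, so the argument survives once that remark is dropped.
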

\begin{proof}
Let $\AnalyticSetA \SymbDef \{\PropA \lor \PropB\}$.
Suppose that $\FmSetA \NAnaSetSetCR{\CalcA^\lor_{\BKName}}{\AnalyticSetA} \FmSetB$,
then, by cut for sets, there is $\SetCut \subseteq \LangSetA$
such that
$\SetCut \NAnaSetSetCR{\CalcA^\lor_{\BKName}}{\AnalyticSetA} \FmSetComp\SetCut$,
with $\FmSetA \subseteq \SetCut$ and $\FmSetB \subseteq \FmSetComp\SetCut$.
Let $\GenSubs{\AnalyticSetA} \SymbDef \Subf{\FmSetA \cup \FmSetB} \cup \{\FmA \lor \FmB \mid \FmA,\FmB \in \Subf{\FmSetA \cup \FmSetB}\}$.
Then $\TPart \NAnaSetSetCR{\CalcA^\lor_{\BKName}}{\AnalyticSetA} \FPart$
for some partition $(\TPart, \FPart)$ of $\GenSubs{\AnalyticSetA}$,
with $\FmSetA \subseteq \TPart$ and $\FmSetB \subseteq \FPart$.
We consider two cases, defining
for each of them a mapping 
$v : \Subf{\FmSetA\cup\FmSetB}\to\ThreeValuesSet$
and proving that $v(\FmA\lor\FmB) = v(\FmA) \BKleeDisj v(\FmB)$:
\begin{itemize}
    \item (a): for all $\FmA,\FmB$, if $\FmA \in \TPart \cap \Subf{\FmSetA\cup\FmSetB}$
    and $\FmB \in \FPart \cap \Subf{\FmSetA\cup\FmSetB}$,
    then $\FmA\lor\FmB \in \TPart$. Let $v : \Subf{\FmSetA\cup\FmSetB}\to\ThreeValuesSet$ such that
    \[
        v(\FmA) \SymbDef
        \begin{cases}
            \tv & \text{ if } \FmA \in \TPart\\
            \bv & \text{ if } \FmA \in \FPart\\
        \end{cases}
    \]
    \begin{itemize}
        \item if $v(\FmA\lor\FmB)=\tv$,
        then, by $(\RuleA^{\lor,1}_{\BKName})$, either
        $v(\FmA)=\tv$ or $v(\FmB)=\tv$, and we are done,
        as the range of $v$ is $\{\tv,\bv\}$;
        \item if $v(\FmA)=\tv$, in case $v(\FmB)=\tv$, use $(\RuleA^{\lor,2}_{\BKName})$, and in case
        $v(\FmB)=\bv$, use the assumption (a).
        The argument is analogous for $v(\FmB)=\tv$, with
        the help of $(\RuleA^{\lor,10}_{\BKName})$;
        \item if $v(\FmA\lor\FmB)=\bv$, 
        use the contrapositive of (a),
        which gives us that either $\FmA \in \FPart$ 
        or $\FmB \in \TPart$. 
        We want to conclude $\FmA,\FmB \in \FPart$
        and for that it is enough to show $\FmB \in \FPart$.
        Assume that $\FmB \in \TPart$
        and consider two subcases:
        either $\FmA \in \TPart$
        or $\FmA \in \FPart$.
        The first case is absurd by
        $(\RuleA^{\lor,2}_{\BKName})$.
        The second case produces,
        by (a), $\FmB \lor \FmA \in \TPart$,
        but then by 
        $(\RuleA^{\lor,10}_{\BKName})$
        we have $\FmA\lor\FmB \in \TPart$,
        absurd again.
        Thus $\FmB \in \FPart$
        and we are done.
        %In the first case, assume by contradiction
        %that \Fixed{17}{$\FmA \in \TPart$.
        %By $(\RuleA^{\lor,2}_{\BKName})$,
        %$\FmB \in \FPart$,
        %but then, by (a),
        %$\FmA\lor\FmB \in \TPart$,
        %thus $v(\FmA\lor\FmB)=\tv$, absurd}.
        %%, then, by (a),
        %%$\FmA \lor \FmB \in \TPart$, but 
        %%by $(\RuleA^{\lor,10}_{\BKName})$ we would have 
        %%$\FmB\lor\FmA \in \Fixed{17}{\TPart}$, thus we have an absurd and $\FmB \in \FPart$.
        %In the second case, if $\FmB \in \TPart$,
        %by $(\RuleA^{\lor,2}_{\BKName})$, we would have
        %$\FmA \in \FPart$, and the latter
        %reasoning applies again;
        %use the contrapositive
        %of the assumption and $(\RuleA^{\lor,10}_{\BKName})$
        \item if $v(\FmA)=\bv$ and $v(\FmB)=\bv$, by
        $(\RuleA^{\lor,1}_{\BKName})$ we have $\FmA\lor\FmB \in \FPart$.
    \end{itemize}
%    In case $v(\FmA\lor\FmB) = \tv$, by rule (1), we must
%    have at least one of $\FmA$ and $\FmB$ in $\TPart$,
%    meaning that $v(\FmA\lor\FmB) = v(\FmA) \BKleeDisj v(\FmB)$.
%    In case $v(\FmA\lor\FmB) = \bv$, we have
%    $\FmA\lor\FmB \in \FPart$, and thus either
%    $\FmA \not\in \TPart \cap \Subf{\FmSetA\cup\FmSetB}$
%    or $\FmB \not\in \FPart \cap \Subf{\FmSetA\cup\FmSetB}$.
    \item for some $\FmB_\tv \in \TPart \cap \Subf{\FmSetA\cup\FmSetB}$
    and $\FmB_{\uv} \in \FPart \cap \Subf{\FmSetA\cup\FmSetB}$,
    we have $\FmB_\tv\lor\FmB_{\uv} \in \FPart$.  
     Let $v : \Subf{\FmSetA\cup\FmSetB}\to\ThreeValuesSet$ such that
     \[
        v(\FmA) \SymbDef
        \begin{cases}
            \tv & \text{ if } \FmA \in \TPart\\
            \uv & \text{ if } \FmA, \FmA\lor\FmB_\tv \in \FPart\\
            \bv & \text{ if } \FmA\in\FPart, \FmA\lor\FmB_\tv \in \TPart\\
        \end{cases}
    \]    
    \begin{itemize}
        \item if $v(\FmA \lor \FmB) = \uv$,
        then $\FmA\lor\FmB, (\FmA\lor\FmB)\lor\FmB_\tv \in \FPart$.
        By $(\RuleA^{\lor,1}_{\BKName})$, either $\FmA \in \FPart$
        or $\FmB \in \FPart$. In the first
        case, by {$(\RuleA^{\lor,6}_{\BKName})$}, we have
        $\FmA\lor\FmB_\tv \in \FPart$
        or $\FmB\lor\FmB_\tv \in \FPart$.
        The former entails that
        $v(\FmA)=\uv$, while the latter, by {$(\RuleA^{\lor,5}_{\BKName})$}, gives that $\FmB\lor\FmB \in \FPart$,
        which, by rule {$(\RuleA^{\lor,1}_{\BKName})$}, gives $\FmB\in\FPart$,
        and thus $v(\FmB)=\uv$.
        In the second case, with $\FmB\in\FPart$,
        by rule {$(\RuleA^{\lor,3}_{\BKName})$}, $\FmA\lor\FmB_\tv \in \FPart$.
        As $\FmB_\tv \in \TPart$,
        by rule {$(\RuleA^{\lor,2}_{\BKName})$} we have $\FmA \in \FPart$,
        and then $v(\FmA)=\uv$, as desired;
        \item if $v(\FmA) = \uv$, then
        $\FmA, \FmA\lor\FmB_\tv \in \FPart$, and, by rule {$(\RuleA^{\lor,9}_{\BKName})$}, we have $\FmA\lor\FmB\in\FPart$; hence, by rule {$(\RuleA^{\lor,7}_{\BKName})$}, we obtain $(\FmA\lor\FmB)\lor\FmB_\tv\in\FPart$, thus $v(\FmA\lor\FmB)=\uv$. The case $v(\FmB)=\uv$ is analogous, using
        rules {$(\RuleA^{\lor,5}_{\BKName})$} and {$(\RuleA^{\lor,8}_{\BKName})$} instead;
        \item if $v(\FmA\lor\FmB)=\bv$, 
        then $\FmA\lor\FmB\in\FPart$ and $(\FmA\lor\FmB)\lor\FmB_\tv \in \TPart$.
        By {$(\RuleA^{\lor,7}_{\BKName})$} and {$(\RuleA^{\lor,8}_{\BKName})$}, $\FmA\lor\FmB_\tv \in \TPart$ and $\FmB\lor\FmB_\tv \in \TPart$
        respectively.
        By {$(\RuleA^{\lor,3}_{\BKName})$} and {$(\RuleA^{\lor,4}_{\BKName})$}, then, we have
        $v(\FmB)=\bv$ and $v(\FmA)=\bv$,
        as desired;
        %use 7, 8, 3 and 4
        \item if $v(\FmA) = \bv$ and $v(\FmB)=\bv$, then $\FmA,\FmB\in\FPart$ and
        $\FmA\lor\FmB_\tv,\FmB\lor\FmB_\tv \in \TPart$. By {$(\RuleA^{\lor,1}_{\BKName})$}, we have $\FmA\lor\FmB\in\FPart$,
        and, by {$(\RuleA^{\lor,6}_{\BKName})$}, we get $(\FmA\lor\FmB)\lor\FmB_\tv \in \TPart$,
        entailing that $v(\FmA\lor\FmB)=\bv$;
        \item if $v(\FmA\lor\FmB)=\tv$,
        by {$(\RuleA^{\lor,9}_{\BKName})$} and {$(\RuleA^{\lor,5}_{\BKName})$}, we have $\FmA\lor\FmB_\tv \in \FPart$ and $\FmB\lor\FmB_\tv \in \FPart$, thus $v(\FmA) \neq \uv$
        and $v(\FmB)\neq\uv$.
        Then, by {$(\RuleA^{\lor,1}_{\BKName})$}, we have $\FmA\in \TPart$
        or $\FmB\in\TPart$, and hence,
        in either case, $v(\FmA\lor\FmB)=v(\FmA)\BKleeDisj v(\FmB)$;
        \item if $v(\FmA) = \tv$ and $v(\FmB) \neq \uv$, we have $\FmA \in \TPart$ and either $\FmB \in \TPart$
        or $\FmB\lor\FmB_\tv \in \FPart$.
        In the first case, by {$(\RuleA^{\lor,2}_{\BKName})$}, we get
        $\FmA\lor\FmB \in \TPart$,
        and, in the second, by {$(\RuleA^{\lor,4}_{\BKName})$}, 
        we have also $\FmA\lor\FmB \in \TPart$,
        and we are done.
        The case $v(\FmB) = \tv$ and $v(\FmA) \neq \uv$ is analogous.
    \end{itemize}
\end{itemize}
\end{proof}

\subsubsection{Implicative fragment of \L ukaziewicz's logic}

Let $\SetSetCR_{\LThreeName}^{\to}$ be the \SetSet{} logic determined by
the three-valued logical matrix $\MatA_{\LThreeName}^{\to}$ whose signature contains only $\to$, interpreted as $\lukImp$ in this case --- the truth table of which we 
repeat below for convenience --- and whose set
of designated values is $\{\tv\}$. The reader can easily verify
that this matrix is not monadic,
as applications of $\lukImp$
on the same value always yield $\tv$.
As in the previous section, despite this fact,
we will axiomatize this logic in \SetSet{}.

\[
    \begin{tabular}{c|ccc}
        \toprule
         \lukImp & \bv & \uv & \tv\\
        \midrule
         \bv & \tv & \tv & \tv\\
         \uv & \uv & \tv & \tv\\
         \tv & \bv & \uv & \tv\\
        \bottomrule
    \end{tabular}
\]

\begin{definition}
Let $\CalcA^\to_{\LThreeName}$ be the \SetSet{} calculus 
whose rule schemas are the following:
\begin{gather*}
    \MCRule{\PropB}{\PropA\to\PropB}\RuleA_{\LThreeName}^{1,\to}
    \quad
    \MCRule{\PropA_1\to\PropA_2, \PropA_1}{\PropA_2}\RuleA_{\LThreeName}^{2,\to}\quad
    \MCRule{(\PropA_1 \to \PropA_2) \to \PropB_0}{\PropA_2 \to \PropB_0}\RuleA_{\LThreeName}^{3,\to}\quad
    \MCRule{\PropA_2 \to \PropB_0, \PropA_1}{(\PropA_1\to\PropA_2)\to\PropB_0}\RuleA_{\LThreeName}^{4,\to}
    \\
    \MCRule{\PropA_1 \to \PropA_2, \PropA_2 \to \PropB_0}{\PropA_1 \to \PropB_0}\RuleA_{\LThreeName}^{5,\to}\quad
    \MCRule{(\PropA_1 \to \PropA_2) \to \PropB_0, \PropA_2 \to \PropB_0}{\PropA_1 \to \PropB_0, \PropA_1}\RuleA_{\LThreeName}^{6,\to}\\
    \MCRule{\PropA_1 \to \PropB_0}{\PropA_1\to\PropA_2,\PropB_{\half} \to \PropB_0, \PropB_\half}\RuleA_{\LThreeName}^{7,\to}\quad
    \MCRule{}{\PropA_1 \to \PropA_2, \PropA_2 \to \PropB_0, \PropA_1}\RuleA_{\LThreeName}^{8,\to}\\
\end{gather*}
\end{definition}

\begin{theorem}
$\CalcA^\to_{\LThreeName}$ is sound for $\MatA_{\LThreeName}^\to$.
\end{theorem}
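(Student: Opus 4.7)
The plan is to verify soundness rule by rule, checking for each rule schema $\MCRule{\FmSetA}{\FmSetB}$ in $\CalcA^\to_{\LThreeName}$ that no $\MatA_{\LThreeName}^\to$-valuation $v$ can send every formula in $\FmSetA$ to $\tv$ while sending every formula in $\FmSetB$ to a non-designated value in $\{\bv,\uv\}$. All verifications proceed by a direct case analysis on the values assigned by $v$ to the subformulas involved, reading off the truth table of $\lukImp$.

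The easy cases are $\RuleA_{\LThreeName}^{1,\to}$ (since $\lukImp(x,\tv)=\tv$ for every $x$), $\RuleA_{\LThreeName}^{2,\to}$ (modus ponens: if $v(\PropA_1)=\tv$ and $v(\PropA_1\to\PropA_2)=\tv$, inspection of the row $\tv$ of $\lukImp$ forces $v(\PropA_2)=\tv$), $\RuleA_{\LThreeName}^{4,\to}$ (from $v(\PropA_1)=\tv$ we get $v(\PropA_1\to\PropA_2)=v(\PropA_2)$, so the premise $v(\PropA_2\to\PropB_0)=\tv$ immediately gives $v((\PropA_1\to\PropA_2)\to\PropB_0)=\tv$), and the transitivity rule $\RuleA_{\LThreeName}^{5,\to}$, which is a standard three-case split on $v(\PropA_1)$.

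For $\RuleA_{\LThreeName}^{3,\to}$ I would split on $v(\PropA_2)$: if $v(\PropA_2)=\tv$ the premise forces $v(\PropB_0)=\tv$; if $v(\PropA_2)=\uv$ then $v(\PropA_1\to\PropA_2)\in\{\uv,\tv\}$ and in either subcase $v(\PropB_0)\in\{\uv,\tv\}$, so $\uv\lukImp v(\PropB_0)=\tv$; if $v(\PropA_2)=\bv$ the conclusion is automatic. An analogous split on $v(\PropA_1)\in\{\bv,\uv\}$ handles $\RuleA_{\LThreeName}^{6,\to}$: the case $v(\PropA_1)=\bv$ is trivial, and the case $v(\PropA_1)=\uv$ reduces by further splitting on $v(\PropA_2)$ to showing $v(\PropB_0)\in\{\uv,\tv\}$.

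The two potentially delicate cases are $\RuleA_{\LThreeName}^{7,\to}$ and $\RuleA_{\LThreeName}^{8,\to}$, since they involve several free variables and possibly empty premises. For $\RuleA_{\LThreeName}^{8,\to}$, assume for contradiction that $v$ sends all three conclusion formulas to non-designated values. Then $v(\PropA_1)\neq\tv$; but $v(\PropA_1)=\bv$ would force $v(\PropA_1\to\PropA_2)=\tv$, so $v(\PropA_1)=\uv$; then $v(\PropA_1\to\PropA_2)\neq\tv$ forces $v(\PropA_2)=\bv$, which in turn forces $v(\PropA_2\to\PropB_0)=\tv$, contradiction. For $\RuleA_{\LThreeName}^{7,\to}$, assuming $v(\PropA_1\to\PropB_0)=\tv$ while the three succedent formulas are non-designated, $v(\PropB_\half)\neq\tv$ together with $v(\PropB_\half\to\PropB_0)\neq\tv$ pins down $v(\PropB_\half)=\uv$ and $v(\PropB_0)=\bv$; but then $v(\PropA_1\to\bv)=\tv$ forces $v(\PropA_1)=\bv$, whence $v(\PropA_1\to\PropA_2)=\tv$ for every $\PropA_2$, contradicting $v(\PropA_1\to\PropA_2)\neq\tv$. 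This last rule is the main obstacle in the sense that its justification really uses the specific feature of $\lukImp$ that $v(x\to\bv)=\tv$ iff $x=\bv$, together with the forced choice $v(\PropB_\half)=\uv$; once this is noticed, the contradiction is immediate.
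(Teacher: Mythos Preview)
Your verification is correct and is exactly the routine check the paper alludes to: the paper's own proof reads simply ``This is routine.'' You have carried out that routine in full, and all eight case analyses are accurate.
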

\begin{proof}
This is routine.
\end{proof}
\begin{theorem}
\label{the:lukimpcomp}
$\CalcA^\to_{\LThreeName}$ is complete for $\MatA_{\LThreeName}^\to$
and $\{\PropA \to \PropB\}$-analytic.
\end{theorem}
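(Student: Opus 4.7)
The plan is to mimic the strategy of the preceding completeness proof for $\CalcA^\lor_{\BKName}$, adapted to cope with the fact that $\MatA^\to_{\LThreeName}$ is not monadic. Let $\AnalyticSetA \SymbDef \{\PropA \to \PropB\}$ and suppose $\FmSetA \NAnaSetSetCR{\CalcA^\to_{\LThreeName}}{\AnalyticSetA} \FmSetB$. Using Cut for Sets I will produce a partition $(\TPart, \FPart)$ of
\[
\GenSubs{\AnalyticSetA} \SymbDef \Subf{\FmSetA\cup\FmSetB} \cup \{\FmA \to \FmB \mid \FmA, \FmB \in \Subf{\FmSetA\cup\FmSetB}\}
\]
extending $(\FmSetA, \FmSetB)$ and not split by any rule schema of $\CalcA^\to_{\LThreeName}$. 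The goal is then to define an $\MatA^\to_{\LThreeName}$-valuation $\ValuationA$ on $\Subf{\FmSetA\cup\FmSetB}$ such that $\ValuationA(\FmA) = \tv$ precisely when $\FmA \in \TPart$, which yields $\ValuationA(\FmSetA) \subseteq \{\tv\}$ and $\ValuationA(\FmSetB) \subseteq \{\bv, \uv\}$.

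The central idea is that although $\MatA^\to_{\LThreeName}$ has no unary separator, \emph{given} a formula $\FmB_0$ intended to be interpreted as $\bv$, the expression $\FmA \to \FmB_0$ becomes a separator, since $\bv \lukImp \bv = \tv$, $\uv \lukImp \bv = \uv$ and $\tv \lukImp \bv = \bv$. This suggests a case split. If $\FPart \cap \Subf{\FmSetA \cup \FmSetB} = \varnothing$ (which forces $\FmSetB = \varnothing$), the constant valuation $\ValuationA \equiv \tv$ works. Otherwise, I fix any witness $\FmB_0 \in \FPart \cap \Subf{\FmSetA\cup\FmSetB}$ and set
\[
\ValuationA(\FmA) \SymbDef \begin{cases} \tv & \text{if } \FmA \in \TPart, \\ \bv & \text{if } \FmA \in \FPart \text{ and } \FmA \to \FmB_0 \in \TPart, \\ \uv & \text{if } \FmA \in \FPart \text{ and } \FmA \to \FmB_0 \in \FPart. \end{cases}
\]
In particular $\ValuationA(\FmB_0) = \bv$, for instantiating $\RuleA_{\LThreeName}^{8,\to}$ with $\PropA_1 = \PropA_2 = \PropB_0 = \FmB_0$ forces $\FmB_0 \to \FmB_0 \in \TPart$ (because $\FmB_0 \in \FPart$).

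The bulk of the argument is the verification, for each $\FmA \to \FmB \in \Subf{\FmSetA\cup\FmSetB}$, that $\ValuationA(\FmA \to \FmB) = \ValuationA(\FmA) \lukImp \ValuationA(\FmB)$. This will be a nine-case inspection of the $\lukImp$ truth table, mediated by the rule schemas as follows: $\RuleA_{\LThreeName}^{1,\to}$ delivers the entries whose output is $\tv$ because $\ValuationA(\FmB) = \tv$; $\RuleA_{\LThreeName}^{2,\to}$ blocks $\FmA \to \FmB \in \TPart$ whenever $\ValuationA(\FmA) = \tv$ and $\ValuationA(\FmB) \neq \tv$; the asymmetric ``transportation'' rules $\RuleA_{\LThreeName}^{3,\to}$--$\RuleA_{\LThreeName}^{5,\to}$ link the position of $\FmA \to \FmB$ with those of the separators $\FmA \to \FmB_0$ and $\FmB \to \FmB_0$, encoding the difference between $\uv$ and $\bv$ among non-designated values; and the three-conclusion rules $\RuleA_{\LThreeName}^{6,\to}$--$\RuleA_{\LThreeName}^{8,\to}$ supply the remaining ``disjunctive'' information needed when both operands are non-designated.

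The main obstacle I anticipate lies precisely in the subcases where both $\FmA$ and $\FmB$ belong to $\FPart$, so their image under $\lukImp$ cannot be read off from $\FmA$ and $\FmB$ in isolation and the positions of $\FmA \to \FmB_0$ and $\FmB \to \FmB_0$ must cooperate; entries such as $\uv \lukImp \uv = \tv$, $\uv \lukImp \bv = \uv$ and $\bv \lukImp \uv = \tv$ will require the full strength of $\RuleA_{\LThreeName}^{6,\to}$ and $\RuleA_{\LThreeName}^{7,\to}$ to force the partition into the right configuration. The $\{\PropA \to \PropB\}$-analyticity assertion is then automatic from the construction, since every formula that enters the argument, including $\FmB_0$ and the various $\FmA \to \FmB_0$, already lies in $\GenSubs{\AnalyticSetA}$.
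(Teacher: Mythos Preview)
Your overall strategy (cut for sets, then build a valuation from the partition using $\FmA \to \FmB_0$ as a context-dependent separator) is the right one, but the case split is too coarse: choosing $\FmB_0$ as an \emph{arbitrary} element of $\FPart \cap \Subf{\FmSetA\cup\FmSetB}$ does not in general yield a homomorphism. The expression $\FmA \to \FmB_0$ separates $\bv$ from $\uv$ only when $\FmB_0$ itself plays the role of~$\bv$; if the partition happens to be compatible with a countermodel assigning $\uv$ to your chosen $\FmB_0$, then $\FmA \to \FmB_0$ takes value $\tv$ for \emph{both} $\FmA$-values $\bv$ and $\uv$, your definition collapses both to $\bv$, and the homomorphism check fails. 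Concretely, take $\FmSetA = \varnothing$, $\FmSetB = \{p, q, p \to q\}$, and the partition of $\GenSubs{\AnalyticSetA}$ induced by the valuation $p \mapsto \uv$, $q \mapsto \bv$. Then $\FPart \cap \Subf{\FmSetA\cup\FmSetB} = \{p, q, p \to q\}$, and if you pick $\FmB_0 = p$ you get $p \to p,\; q \to p,\; (p \to q) \to p \in \TPart$, so your definition assigns $\bv$ to each of $p$, $q$, $p \to q$ --- but $\bv \lukImp \bv = \tv$, while $p \to q \in \FPart$. No rule forces $p \to q$ into $\TPart$ from these data: the rule $\RuleA_{\LThreeName}^{7,\to}$ you rightly single out has the escape clause $\PropB_{\half} \to \PropB_0,\, \PropB_\half$ in its succedent, and here every admissible $\PropB_\half$ satisfies $\PropB_\half \to p \in \TPart$.

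The paper repairs this with a sharper dichotomy: either $\FmA \to \FmB \in \TPart$ for \emph{all} $\FmA, \FmB \in \FPart \cap \Subf{\FmSetA\cup\FmSetB}$ (then the two-valued $\{\tv,\bv\}$ valuation works outright, with no separator needed), or there exist witnesses $\FmA_{\uv}, \FmB_\bv \in \FPart$ with $\FmA_{\uv} \to \FmB_\bv \in \FPart$. One then sets $\FmB_0 := \FmB_\bv$, and the companion $\FmA_{\uv}$ --- which by construction has $\FmA_{\uv} \in \FPart$ and $\FmA_{\uv} \to \FmB_0 \in \FPart$ --- is exactly the instantiation of $\PropB_\half$ needed to close off the escape clause of $\RuleA_{\LThreeName}^{7,\to}$ in the subcase $v(\FmA) = \bv$. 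With this refined case split your outline goes through, and the nine-entry verification proceeds essentially as in the paper.
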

\begin{proof}
Let $\AnalyticSetA \SymbDef \{\PropA \to \PropB\}$.
Suppose that $\FmSetA \NAnaSetSetCR{\CalcA^\to_{\LThreeName}}{\AnalyticSetA} \FmSetB$,
then, by cut for sets, there is $\SetCut \subseteq \LangSetA$
such that
$\SetCut \NAnaSetSetCR{\CalcA^\to_{\LThreeName}}{\AnalyticSetA} \FmSetComp\SetCut$,
with $\FmSetA \subseteq \SetCut$ and $\FmSetB \subseteq \FmSetComp\SetCut$.
Let $\GenSubs{\AnalyticSetA} \SymbDef \Subf{\FmSetA \cup \FmSetB} \cup \{\FmA \to \FmB \mid \FmA,\FmB \in \Subf{\FmSetA \cup \FmSetB}\}$.
Then $\TPart \NAnaSetSetCR{\CalcA^\to_{\LThreeName}}{\AnalyticSetA} \FPart$
for some partition $(\TPart, \FPart)$ of $\GenSubs{\AnalyticSetA}$,
with $\FmSetA \subseteq \TPart$ and $\FmSetB \subseteq \FPart$.
We consider two cases, defining
for each of them a mapping 
$v : \Subf{\FmSetA\cup\FmSetB}\to\ThreeValuesSet$
and proving that $v(\FmA\to\FmB) = v(\FmA) \lukImp v(\FmB)$:
\begin{itemize}
    \item (a): for every $\FmA,\FmB \in \FPart \cap \Subf{\FmSetA\cup\FmSetB}$, we have
    $\FmA \to \FmB \in \TPart$.
    Let us define $v : \Subf{\FmSetA\cup\FmSetB}\to\ThreeValuesSet$ such that
    \[
        v(\FmA) \SymbDef
        \begin{cases}
            \tv & \text{ if } \FmA \in \TPart\\
            \bv & \text{ if } \FmA \in \FPart\\
        \end{cases}
    \]
    \begin{itemize}
        \item if $v(\FmA) = \tv$ and $v(\FmB)=\bv$: 
        by rule $(\RuleA_{\LThreeName}^{2,\to})$, we
        must have $\FmA \to \FmB \in \FPart$, that is,
        $v(\FmA\to\FmB)=\bv$;
        \item if $v(\FmA\to\FmB)=\bv$:
        by $(\RuleA_{\LThreeName}^{1,\to})$, we have
        $\FmB \in \FPart$ and, by (a), we must have
        $\FmA \in \TPart$;
        \item if $v(\FmA) = \bv$: by {$(\RuleA_{\LThreeName}^{8,\to})$}, we have
        that (b): $\FmA\to\FmB \in \TPart$ or 
        (c): $\FmB\to\FmA \in \TPart$.
        Case (b), we are done. Case (c), by $(\RuleA_{\LThreeName}^{2,\to})$,
        we have $\FmB \in \FPart$,
        and then, by (a), we have $\FmA\to\FmB \in \TPart
        $;
        \item if $v(\FmB) = \tv$: by rule $(\RuleA_{\LThreeName}^{1,\to})$, we
        must have $\FmA \to \FmB \in \TPart$;
        \item if $v(\FmA\to\FmB) = \tv$:
        by $(\RuleA_{\LThreeName}^{2,\to})$, we must have
        either $\FmA \in \FPart$ or $\FmB \in \TPart$.
    \end{itemize}
    \item there are $\FmA_{\uv}, \FmB_\bv \in \FPart\cap\Subf{\FmSetA\cup\FmSetB}$ with
    $\FmA_{\uv} \to \FmB_\bv \in \FPart$. In this case, let $v : \Subf{\FmSetA\cup\FmSetB}\to\ThreeValuesSet$ such that
    \[
        v(\FmA) \SymbDef
        \begin{cases}
            \tv & \text{ if } \FmA \in \TPart\\
            \uv & \text{ if } \FmA, \FmA\to\FmB_\bv \in \FPart\\
            \bv & \text{ if } \FmA \in \FPart, \FmA\to\FmB_\bv \in \TPart\\
        \end{cases}
    \]
    \begin{itemize}
        \item if $v(\FmA) = \tv$ and $v(\FmB)=\bv$: 
        we have $\FmB \in \FPart, \FmB \to \FmB_\bv \in \TPart$,
        then, by {$(\RuleA_{\LThreeName}^{4,\to})$}, we have
        $(\FmA \to \FmB)\to\FmB_\bv \in \TPart$,
        and, by {$(\RuleA_{\LThreeName}^{1,\to})$}, $\FmA \to \FmB \in \FPart$,
        thus $v(\FmA\to\FmB) = \bv$;
        \item if $v(\FmA\to\FmB)=\bv$: by {$(\RuleA_{\LThreeName}^{3,\to})$},
        we have $\FmB \to \FmB_\bv \in \TPart$
        and by {$(\RuleA_{\LThreeName}^{2,\to})$}, we have $\FmB \in \FPart$, thus
        $v(\FmB)=\bv$; by {$(\RuleA_{\LThreeName}^{6,\to})$}, we have either
        $\FmA \in \TPart$ or $\FmA \to \FmB_\bv \in \TPart$.
        In the first case, we are done; the second
        case leads to an absurd, by {$(\RuleA_{\LThreeName}^{7,\to})$},
        since $\FmA_{\uv} \to \FmB_\bv, \FmB_\bv \in \FPart$;
        \item if $v(\FmA) = \bv$: by rule {$(\RuleA_{\LThreeName}^{8,\to})$},
        we have either $\FmA\to\FmB \in \TPart$
        or $\FmB\to\FmB_{\uv} \in \TPart$.
        In the first case, we are done.
        In the second, we must have $\FmA\to\FmB \in \TPart$
        by rule {$(\RuleA_{\LThreeName}^{7,\to})$};
        \item if $v(\FmB) = \tv$: by rule {$(\RuleA_{\LThreeName}^{1,\to})$}, we have
        $\FmA\to\FmB \in \TPart$;
        \item if $v(\FmA\to\FmB) = \tv$: then $\FmA\to\FmB \in \TPart$. By {$(\RuleA_{\LThreeName}^{2,\to})$}, we have either (b): $\FmA \in \FPart$
        or (c): $\FmB \in \TPart$. In case (b), by {$(\RuleA_{\LThreeName}^{5,\to})$}, we have either
        (d): $\FmA\to\FmB_\bv \in \TPart$ or
        (e): $\FmB\to\FmB_\bv \in \FPart$.
        Case (d), we have $v(\FmA) = \bv$, as desired.
        Case (e), if $\FmB \in \TPart$, we are done,
        and if $\FmB \in \FPart$, we will have $v(\FmB)=\uv$,
        and we are also done.
        Case (c), that is, $\FmB \in \TPart$, $v(\FmB)=\tv$ and we are done;
        \item if $v(\FmA\to\FmB) = \uv$: then
        $\FmA\to\FmB,(\FmA\to\FmB)\to\FmB_\bv \in \FPart$.
        Then, by {$(\RuleA_{\LThreeName}^{4,\to})$}, either (b): $\FmA \in \FPart$ or
        (c): $\FmB\to\FmB_\bv \in \FPart$.
        Case (b), by {$(\RuleA_{\LThreeName}^{8,\to})$}, we have $\FmB\to\FmB_\bv \in \TPart$,
        and, by {$(\RuleA_{\LThreeName}^{2,\to})$}, we have $\FmB\in\FPart$, meaning
        that $v(\FmB)=\bv$. Moreover, by {$(\RuleA_{\LThreeName}^{7,\to})$}, we have $\FmA\to\FmB_\bv\in\FPart$, hence $v(\FmA)=\uv$,
        as desired.
        Case (c), by {$(\RuleA_{\LThreeName}^{8,\to})$} we have $\FmA \in \TPart$, i.e. $v(\FmA)=\tv$.
        Also, by {$(\RuleA_{\LThreeName}^{1,\to})$} and the fact that $\FmA\to\FmB \in \FPart$, we
        have $\FmB \in \FPart$, and then $v(\FmB)=\uv$, as desired;
        \item if $v(\FmA) = \uv$ and $v(\FmB)=\bv$:     then
        $\FmA,\FmA\to\FmB_\bv \in \FPart$
        and $\FmB \in \FPart$ and $\FmB\to\FmB_\bv \in \TPart$.
        By {$(\RuleA_{\LThreeName}^{5,\to})$}, we have $\FmA\to\FmB \in \FPart$, and, 
        by {$(\RuleA_{\LThreeName}^{6,\to})$}, we have $(\FmA\to\FmB)\to\FmB_\bv \in \FPart$,
        we have $v(\FmA\to\FmB)=\uv$.
        \item if $v(\FmA) = \tv$ and $v(\FmB)=\uv$:
        then $\FmA \in \TPart$, $\FmB \in \FPart$
        and $\FmB\to\FmB_\bv \in \TPart$. By {$(\RuleA_{\LThreeName}^{4,\to})$},
        we have $(\FmA\to\FmB)\to\FmB_\bv \in \FPart$,
        and, by {$(\RuleA_{\LThreeName}^{1,\to})$}, we have $\FmA\to\FmB \in \FPart$,
        thus $v(\FmA\to\FmB)=\uv$.
    \end{itemize}
\end{itemize}
\end{proof}

\subsubsection{Implicative fragment of G\"odel's logic}

The last example in this series of
three-valued logics determined by
non-monadic matrices concerns
$\SetSetCR_{\GThreeName}^{\to}$, the \SetSet{} logic determined by
the three-valued logical matrix $\MatA_{\GThreeName}^{\to}$ --- we repeat the
interpretation of $\to$ below for convenience.
In this case, we will be able to take advantage
of the calculus introduced in the previous
subsection in order to obtain the desired
axiomatization.

\[
    \begin{tabular}{c|ccc}
        \toprule
             \HImp & \bv & \uv & \tv\\
        \midrule
         \bv & \tv & \tv & \tv\\
         \uv & \bv & \tv & \tv\\
         \tv & \bv & \uv & \tv\\
        \bottomrule
    \end{tabular}
\]

\begin{definition}
Let $\CalcA^\to_{\GThreeName}$ be the \SetSet{} calculus 
whose rule schemas are the same as those of $\CalcA^\to_{\LThreeName}$,
except that we replace $(\RuleA_{\LThreeName}^{6,\to})$
by the following:
\begin{gather*}
\MCRule{\PropA_2 \to \PropB_0}{(\PropA_1\to\PropA_2)\to\PropB_0, \PropA_1\to\PropB_0}(\RuleA_{\GThreeName}^{6,\to})
\end{gather*}
\end{definition}

\begin{theorem}
$\CalcA^\to_{\GThreeName}$ is sound for $\MatA_{\GThreeName}^\to$.
\end{theorem}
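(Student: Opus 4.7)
The plan is a direct, rule-by-rule verification that no $\MatA_{\GThreeName}^\to$-valuation $v$ simultaneously assigns $\tv$ to every formula in the antecedent of a given rule schema while assigning a non-designated value to every formula in the succedent. Each rule is finitary and schematic in at most four variables, so each check reduces to a finite inspection of the truth table of $\HImp$. A useful structural observation to streamline the bookkeeping is that, if we order the truth values by $\bv < \uv < \tv$, then $a \HImp b = \tv$ precisely when $a \leq b$, and otherwise $a \HImp b = b$.

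For the seven rules shared with $\CalcA^\to_{\LThreeName}$ (namely $\RuleA^{1,\to}$--$\RuleA^{5,\to}$, $\RuleA^{7,\to}$, and $\RuleA^{8,\to}$), I would recycle the analyses carried out in the proof of the soundness theorem for $\CalcA^\to_{\LThreeName}$, paying attention only to those subcases in which the unique tabular difference between $\lukImp$ and $\HImp$ (namely $\uv \HImp \bv = \bv$ versus $\uv \lukImp \bv = \uv$) could matter. Routine inspection confirms that this change never produces a new counterexample: in each rule the offending entry either never arises in the targeted subformula or is replaced by a strictly smaller value, which in the Gödel order can only reinforce the falsifying conditions already excluded in the \L ukasiewicz case.

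The genuinely new verification concerns the replaced rule $(\RuleA_{\GThreeName}^{6,\to})$, which is the one step I expect to require care. Assume toward contradiction that $v(\FmA_2 \to \FmB_0) = \tv$ while both $v((\FmA_1 \to \FmA_2) \to \FmB_0) \neq \tv$ and $v(\FmA_1 \to \FmB_0) \neq \tv$. Using the order-theoretic characterization, the first assumption gives $v(\FmA_2) \leq v(\FmB_0)$, and the last gives $v(\FmA_1) > v(\FmB_0)$, whence $v(\FmB_0) \in \{\bv, \uv\}$. A short case split on $v(\FmB_0)$ and $v(\FmA_1)$ then shows that in every scenario $v(\FmA_1 \to \FmA_2) \leq v(\FmB_0)$, so $v((\FmA_1 \to \FmA_2) \to \FmB_0) = \tv$, contradicting the remaining assumption.

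The main difficulty, such as it is, is purely combinatorial: the argument amounts to keeping the (at most twenty-seven) cases organized across the eight schemas without double-counting and without overlooking the boundary entries that witness the $\lukImp$/$\HImp$ discrepancy. There is no conceptual subtlety, since the calculus was designed precisely to mirror the entries of $\HImp$, and the linear-order description of Gödel implication makes the finitely many verifications transparent.
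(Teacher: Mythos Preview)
Your proposal is correct and follows essentially the same approach as the paper, which dispatches the proof with a single ``This is routine.'' You simply spell out more of the routine verification than the paper does, and your order-theoretic description of $\HImp$ together with the explicit check of $(\RuleA_{\GThreeName}^{6,\to})$ are accurate.
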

\begin{proof}
This is routine.
\end{proof}
\begin{theorem}
$\CalcA^\to_{\GThreeName}$ is complete for $\MatA_{\GThreeName}^\to$
and $\{\PropA \to \PropB\}$-analytic.
\end{theorem}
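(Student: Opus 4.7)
The plan is to adapt the proof of Theorem~\ref{the:lukimpcomp} essentially verbatim, adjusting only those steps that rely on the single rule distinguishing the two calculi. The calculi $\CalcA^\to_{\LThreeName}$ and $\CalcA^\to_{\GThreeName}$ agree on seven of their eight rules; the only difference is the replacement of $(\RuleA_{\LThreeName}^{6,\to})$ by $(\RuleA_{\GThreeName}^{6,\to})$. Similarly, the truth tables of $\lukImp$ and $\HImp$ differ only at the entry $\uv\HImp\bv=\bv$ (whereas $\uv\lukImp\bv=\uv$). The reworking therefore concentrates on those case verifications that use $(\RuleA_{\LThreeName}^{6,\to})$ or that involve the distinguishing entry.

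As in Theorem~\ref{the:lukimpcomp}, I would start from $\FmSetA \NAnaSetSetCR{\CalcA^\to_{\GThreeName}}{\AnalyticSetA} \FmSetB$ with $\AnalyticSetA = \{\PropA\to\PropB\}$, invoke cut for sets, and obtain a partition $(\TPart,\FPart)$ of $\GenSubs{\AnalyticSetA} = \Subf{\FmSetA\cup\FmSetB} \cup \{\FmA\to\FmB \mid \FmA,\FmB \in \Subf{\FmSetA\cup\FmSetB}\}$ with $\TPart \NAnaSetSetCR{\CalcA^\to_{\GThreeName}}{\AnalyticSetA} \FPart$. I would then split into the same two cases: Case~(a), where $\FmA\to\FmB\in\TPart$ whenever $\FmA,\FmB\in\FPart\cap\Subf{\FmSetA\cup\FmSetB}$; and Case~(b), where witnesses $\FmA_\uv, \FmB_\bv \in \FPart \cap \Subf{\FmSetA\cup\FmSetB}$ with $\FmA_\uv\to\FmB_\bv \in \FPart$ exist. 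In each case I adopt the same valuation $v$ as in Theorem~\ref{the:lukimpcomp}. Case~(a) transfers unchanged, since the restriction of $\HImp$ to $\{\bv,\tv\}$ coincides with that of $\lukImp$ and only rules shared by both calculi are used there.

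Case~(b) is where the work lies. The entries of the truth table that do not touch the $\uv \HImp \bv$ cell transfer essentially verbatim. The genuinely new verification is the entry $v(\FmA)=\uv$, $v(\FmB)=\bv$, which must now yield $v(\FmA\to\FmB)=\bv$: I would apply $(\RuleA_{\GThreeName}^{6,\to})$ with $\PropA_1=\FmA$, $\PropA_2=\FmB$, $\PropB_0=\FmB_\bv$ to the assumption $\FmB\to\FmB_\bv\in\TPart$, in which the second disjunct $\FmA\to\FmB_\bv$ is in $\FPart$ (by $v(\FmA)=\uv$), forcing $(\FmA\to\FmB)\to\FmB_\bv \in \TPart$; to place $\FmA\to\FmB$ in $\FPart$, I note that $\FmA\to\FmB\in\TPart$ would, combined with $\FmB\to\FmB_\bv\in\TPart$ via $(\RuleA^{5,\to})$, force $\FmA\to\FmB_\bv\in\TPart$, contradicting $v(\FmA)=\uv$. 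Symmetrically, the output-side subcase ``$v(\FmA\to\FmB)=\bv$'' of the L3 proof must be refined so that its conclusion is weakened from $v(\FmA)=\tv$ to $v(\FmA)\in\{\uv,\tv\}$, while the subcase ``$v(\FmA\to\FmB)=\uv$'' must be pruned to exclude the now-forbidden inputs $v(\FmA)=\uv,v(\FmB)=\bv$.

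The main obstacle I foresee is the collateral requirement that $v(\FmA)=v(\FmB)=\bv$ must force $\FmA\to\FmB\in\TPart$ (since $\bv\HImp\bv=\tv$). Syntactically, this demands deriving $\FmA\to\FmB\in\TPart$ from $\FmA,\FmB\in\FPart$ together with $\FmA\to\FmB_\bv, \FmB\to\FmB_\bv\in\TPart$. The strategy I would try is to apply $(\RuleA_{\GThreeName}^{6,\to})$ with $\PropA_1=\FmA, \PropA_2=\FmB, \PropB_0=\FmB$ to the fact $\FmB\to\FmB\in\TPart$ (itself derivable from $(\RuleA^{8,\to})$ and $\FmB \in \FPart$), obtaining $(\FmA\to\FmB)\to\FmB\in\TPart$ as the only surviving disjunct; and then exploit $(\RuleA^{7,\to})$ with $\PropB_\half=\FmB_\bv$ to close off the escape routes, leveraging $\FmB_\bv\in\FPart$ and $\FmA_\uv\to\FmB_\bv\in\FPart$ as minimality witnesses. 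Ensuring that this iterated derivation remains within $\SubfTheta{\AnalyticSetA}{\FmSetA\cup\FmSetB}$---so that $\{\PropA\to\PropB\}$-analyticity is preserved alongside completeness---is the most delicate point I expect to discharge.
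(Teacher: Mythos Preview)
Your approach is correct and essentially the same as the paper's: adapt the proof of Theorem~\ref{the:lukimpcomp}, keeping Case~(a) intact and modifying only those sub-cases of the second item that either use $(\RuleA_{\LThreeName}^{6,\to})$ or touch the entry $\uv\HImp\bv$. The paper in fact rewrites exactly the three sub-cases you single out: $v(\FmA\to\FmB)=\bv$, $v(\FmA\to\FmB)=\uv$, and $v(\FmA)=\uv,\,v(\FmB)=\bv$.

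Your anticipated ``main obstacle'', however, is illusory. The entry $\bv\HImp\bv=\tv$ coincides with $\bv\lukImp\bv=\tv$, and in the proof of Theorem~\ref{the:lukimpcomp} this input combination is subsumed by the sub-case ``if $v(\FmA)=\bv$'', which is handled using only rules $(\RuleA_{\LThreeName}^{8,\to})$ and $(\RuleA_{\LThreeName}^{7,\to})$---rule~6 is not invoked there. Hence that sub-case transfers to $\CalcA^\to_{\GThreeName}$ verbatim, and the iterated derivation you sketch (with its attendant analyticity worries) is unnecessary.
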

\begin{proof}
The proof is very similar to the one of Theorem~\ref{the:lukimpcomp},
di{f}fering only in some sub-cases of the second item:
\begin{itemize}
    \item if $v(\FmA\to\FmB)=\bv$: then 
    $\FmA\to\FmB \in \FPart$
    and $(\FmA\to\FmB)\to\FmB_\bv \in \TPart$.
    By {$(\RuleA_{\GThreeName}^{3,\to})$},
        we have $\FmB \to \FmB_\bv \in \TPart$
        and by {$(\RuleA_{\GThreeName}^{2,\to})$}, we have $\FmB \in \FPart$, thus
        $v(\FmB)=\bv$; also, by {$(\RuleA_{\GThreeName}^{7,\to})$}, we have
        $\FmA\to\FmB_\bv \in \FPart$.
        Then, if $\FmA \in \TPart$,
        we are done, and if $\FmA \in \FPart$,
        we have $v(\FmA)=\uv$, as desired.
    \item if $v(\FmA\to\FmB)=\uv$:
    then
        $\FmA\to\FmB,(\FmA\to\FmB)\to\FmB_\bv \in \FPart$.
        Then, by {$(\RuleA_{\GThreeName}^{4,\to})$}, either (b): $\FmA \in \FPart$ or
        (c): $\FmB\to\FmB_\bv \in \FPart$.
        Case (b), by {$(\RuleA_{\GThreeName}^{8,\to})$}, we have $\FmB\to\FmB_\bv \in \TPart$. 
        By {$(\RuleA_{\GThreeName}^{7,\to})$}, we have $\FmA\to\FmB_\bv\in\FPart$,
        but, by {$(\RuleA_{\GThreeName}^{6,\to})$}, we have $\FmA\to\FmB_\bv \in \TPart$,
        an absurd.
        Case (c), by {$(\RuleA_{\GThreeName}^{8,\to})$} we have $\FmA \in \TPart$, i.e. $v(\FmA)=\tv$.
        Also, by {$(\RuleA_{\GThreeName}^{1,\to})$} and the fact that $\FmA\to\FmB \in \FPart$, we
        have $\FmB \in \FPart$, and then $v(\FmB)=\uv$, as desired;
    \item if $v(\FmA)=\uv$ and $v(\FmB)=\bv$:
    then
        $\FmA,\FmA\to\FmB_\bv \in \FPart$
        and $\FmB \in \FPart$ and $\FmB\to\FmB_\bv \in \TPart$.
        By {$(\RuleA_{\GThreeName}^{5,\to})$}, we have $\FmA\to\FmB \in \FPart$, and, 
        by {$(\RuleA_{\GThreeName}^{6,\to})$}, we have $(\FmA\to\FmB)\to\FmB_\bv \in \TPart$,
        we have $v(\FmA\to\FmB)=\bv$.
\end{itemize}
\end{proof}

\section{Some correspondences between 3-labelled and multiple-conclusion Hilbert-style calculi}
\label{ComparisonBetween3labelledAndMultipleConclusionCalculi}

As to the comparison between three-labelled and \SetSet{} Hilbert-style calculi, we first observe the following: \SetSet{} Hilbert-style calculi manipulate sets of formulas according to the Tarskian postulates (cf.~Definition \ref{def:SetSet}), so the structural rules of three-labelled calculi (Identity, Weakening and Cut --- see Section~\ref{sec:3LabelledCalculi}) are implicitly assumed.
%\footnote{Notice that we may also equivalently add those rules to \SetSet{} Hilbert-style calculi.} 
Therefore, we can confine ourselves to consider logical rules. 
For simplicity, 
we will also limit ourselves in this section to matrices with
the single element $\tv$ designated, as adapting the results
to the other cases is not a difficult task.
The following
shows a correspondence between
the generating subprocedures of each formalism. 
To allow for a more general
presentation, we use the
subprocedure of the \SetSet{} formalism for the non-deterministic case~\cite{marcelino2019}, which is
equivalent to the one we
presented in Section~\ref{def:monadiccalc}:

\begin{theorem}
\label{thm:FullMultipleIFFFullLabelledCalcluli}
Consider the three-valued monadic matrix 
$\MatA \SymbDef \langle \AlgA, \{\tv\} \rangle$.
For each $k \in \omega$,
$\conn \in \Sigma^k$, and for all $\LabelValA_1,\ldots,\LabelValA_n \in \ThreeValuesSet$,
the 3-labelled rule
\begin{center}
\AXC{$(\CtxLabAZero \mid \CtxLabAHalf \mid \CtxLabAOne)[\,\LabelValA_1 : \FmA_1\,] \quad \cdots \quad (\CtxLabAZero \mid \CtxLabAHalf \mid \CtxLabAOne)[\,\LabelValA_n : \FmA_n\,]$}
\RL{(R)}
\UIC{$(\CtxLabAZero \mid \CtxLabAHalf \mid \CtxLabAOne)[\,\LabelValA : \conn(\FmA_1,\ldots,\FmA_n)\,]$}
\DP
\end{center}
is generated by the sub-procedure for 3-labelled calculi
if, and only if, the \SetSet{} rules
\[
(\mathsf{r}_1)\frac{
\bigcup_{i=1}^n\Omega_{\LabelValA_i}(\FmA_i) \cup
\Omega_{\bar y_1}(\conn(\FmA_1,\ldots,\FmA_n))
}{
\bigcup_{i=1}^n\mho_{\LabelValA_i}(\FmA_i) \cup
\mho_{\bar y_1}(\conn(\FmA_1,\ldots,\FmA_n))
}
\qquad
(\mathsf{r}_2)\frac{
\bigcup_{i=1}^n\Omega_{\LabelValA_i}(\FmA_i) \cup
\Omega_{\bar y_2}(\conn(\FmA_1,\ldots,\FmA_n))
}{
\bigcup_{i=1}^n\mho_{\LabelValA_i}(\FmA_i) \cup
\mho_{\bar y_2}(\conn(\FmA_1,\ldots,\FmA_n))
}
\]
%\begin{align*}
%    \RuleA_{\conn, x_1,\ldots,x_k, T_y} \SymbDef \MCRule{\bigcup_{i=1}^k \DesSeps{x_i}(\PropA_i)}{\bigcup_{i=1}^k \NDesSeps{x_i}(\PropA_i), T_y(\conn(\PropA_1,\ldots,\PropA_k))},& \text{ for each } T_y \in \DesSeps{y}(\PropA), and\\
%    \RuleA_{\conn, x_1,\ldots,x_k, F_y} \SymbDef \MCRule{\bigcup_{i=1}^k \DesSeps{x_i}(\PropA_i), F_y(\conn(\PropA_1,\ldots,\PropA_k))}{\bigcup_{i=1}^k \NDesSeps{x_i}(\PropA_i)},& \text{ for each } F_y \in \NDesSeps{y}(\PropA),\\
%    &\text{where }y \SymbDef \conn^\AlgA(x_1,\ldots,x_k).
%\end{align*}
are generated by the sub-procedure for \SetSet{} Hilbert-style calculi in the non-deterministic setting (cf.\cite{marcelino2019}),
where $\ThreeValuesSet = \{\bar y_1, \bar y_2,  \conn_\AlgA(\LabelValA_1,\ldots,\LabelValA_n)\}$.
\end{theorem}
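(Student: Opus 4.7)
The plan is to reduce both directions of the biconditional to the same elementary condition on the truth table of $\conn$, namely that $\conn_\AlgA(\LabelValA_1,\ldots,\LabelValA_n) = \LabelValA$. First I would unpack the left-hand side: by Theorem~\ref{the:axiomatization_three_labelled}, the 3-labelled sub-procedure emits exactly one rule per truth-table entry, and the rule $(R)$ with principal formula $\LabelValA:\conn(\FmA_1,\ldots,\FmA_n)$ and premises positioned according to $\LabelValA_1,\ldots,\LabelValA_n$ is emitted if and only if $\conn_\AlgA(\LabelValA_1,\ldots,\LabelValA_n) = \LabelValA$. This direction is essentially bookkeeping: the shape of $(R)$ already fixes the input-output pattern, and a rule of that shape is in $\ThreeLabName{\LName}$ precisely when the matching truth-table entry holds.

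Next I would recall the non-deterministic \SetSet{} sub-procedure of~\cite{marcelino2019}, which in the deterministic monadic case specialises to Definition~\ref{def:monadiccalc}. Given an input tuple $(\LabelValA_1,\ldots,\LabelValA_n)$, it generates one rule per truth value $y$ lying outside the image of $\conn_\AlgA$ on that tuple, intuitively a rule closing off $y$ as a possible output. In the deterministic three-valued case the complement of a singleton image contains exactly two elements, which (setting $\LabelValA := \conn_\AlgA(\LabelValA_1,\ldots,\LabelValA_n)$) are precisely the $\bar y_1,\bar y_2$ of the statement. By Lemma~\ref{lem:caractvalues}, placing $\Omega_y(\conn(\FmA_1,\ldots,\FmA_n))$ on the antecedent and $\mho_y(\conn(\FmA_1,\ldots,\FmA_n))$ on the succedent, together with the input-side separators $\bigcup_i\Omega_{\LabelValA_i}(\FmA_i)$ and $\bigcup_i\mho_{\LabelValA_i}(\FmA_i)$, faithfully encodes the impossibility of simultaneously valuing the $\FmA_i$'s as $\LabelValA_i$ and $\conn(\FmA_1,\ldots,\FmA_n)$ as $y$. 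This yields exactly the two rules $(\mathsf{r}_1)$ and $(\mathsf{r}_2)$, and they are both produced by the sub-procedure if and only if $\{\bar y_1,\bar y_2\}\cap\{\conn_\AlgA(\LabelValA_1,\ldots,\LabelValA_n)\} = \EmptySet$, i.e.\ iff $\conn_\AlgA(\LabelValA_1,\ldots,\LabelValA_n)=\LabelValA$.

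Combining the two observations delivers the biconditional: the emission of $(R)$ and the joint emission of $(\mathsf{r}_1),(\mathsf{r}_2)$ are each equivalent to the single truth-table condition $\conn_\AlgA(\LabelValA_1,\ldots,\LabelValA_n) = \LabelValA$, hence equivalent to each other. I would also note that the correspondence is inherently a pairing, since cancelling one truth-value in the \SetSet{} formalism requires excluding each of the two non-designated candidates for the output of $\conn$, while in the labelled formalism the same information is packaged into a single rule whose conclusion pinpoints the one surviving label.

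The main obstacle I anticipate is not any deep reasoning but the careful alignment of notation across the two frameworks: one must verify that the separator-based encoding via $\Omega_y,\mho_y$ really captures ``output equals $y$'' through Lemma~\ref{lem:caractvalues}, that the non-deterministic sub-procedure of~\cite{marcelino2019} reduces cleanly to the schema displayed in the theorem when applied to a deterministic three-valued monadic matrix, and that the two excluded values $\bar y_1,\bar y_2$ are indexed consistently on both sides so that the pairing between labelled rules and pairs of \SetSet{} rules is unambiguous.
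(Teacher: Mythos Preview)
Your proposal is correct and follows essentially the same argument as the paper: both directions are reduced to the single truth-table condition $\conn_\AlgA(\LabelValA_1,\ldots,\LabelValA_n)=\LabelValA$, using that the 3-labelled sub-procedure emits one rule per entry while the non-deterministic \SetSet{} sub-procedure emits one rule per excluded output value, hence two rules in the deterministic three-valued case. Your write-up is in fact somewhat more explicit than the paper's (which is quite terse), particularly in invoking Lemma~\ref{lem:caractvalues} to justify the separator encoding and in flagging the notational alignment issues.
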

\begin{proof}
From the left to the right, assume that
(R) was generated by the sub-procedure for
3-labelled calculi, meaning that
 $\AlgInterp{\conn}{\AlgA}(\LabelValA_1,\ldots,\LabelValA_k) = y$.
Then the procedure for 
generating \SetSet{} calculi
in the non-deterministic setting
produces one rule for each
$z \neq \AlgInterp{\conn}{\AlgA}(\LabelValA_1,\ldots,\LabelValA_k)$,
whose shape is given precisely by $(\mathsf{r}_1)$
and $(\mathsf{r}_2)$, where $\{\bar y_1, \bar y_2\} = \ThreeValuesSet\setminus\{\AlgInterp{\conn}{\AlgA}(\LabelValA_1,\ldots,\LabelValA_k)\}$.
The converse direction is similar:
if $(\mathsf{r}_1)$ and $(\mathsf{r}_2)$ are produced,
then a single
rule will be generated by the 3-labelled
sub-procedure bearing witness to the
fact that $y = \AlgInterp{\conn}{\AlgA}(\LabelValA_1,\ldots,\LabelValA_k) \not\in \{\bar y_1, \bar y_2\}$.
Notice that, in this way, if $\conn$ is $n$-ary, a
single 3-labelled rule per entry is produced ($3^n$ rules at the end),
while two \SetSet{} rules are produced
per entry (so, $2\cdot3^n$ rules)
by the \SetSet{} sub-procedure. 
Thus, a 3-labelled rule
corresponds to two \SetSet{} rules
(and vice-versa).
%It follows because the condition for
%these rules being generated in their
%respective sub-procedures is that
%$\conn^\AlgA(x_1,\ldots,x_k) = y$.
\end{proof}

\begin{example}
\label{ex:translation-rules-procedures}
Here we show some correspondences
between rules of $\ThreeLabStreamName{\LThreeName}$ and rules of $\smName{\LThreeName}$
according with the
previous theorem:
\begin{itemize}
    \item 
    \begin{math}
    \AXC{$\CtxLabAZero, \FmA \mid \CtxLabAHalf \mid \CtxLabAOne$}
    \RL{\fns $\aneg^\bv_\tvb$}
    \UIC{$\CtxLabAZero \mid \CtxLabAHalf \mid \CtxLabAOne, \aneg \FmA$}
    \DP
    \end{math}
    corresponds to
    $\AXC{$\neg\neg\FmA, \neg\FmA$}
    \RL{\fns $\aneg^\bv_{\tvb\bv}$}
    \UIC{$\neg\FmA, \FmA$}
    \DP$
    and
    $\AXC{$\aneg\FmA$}
    \RL{\fns $\aneg^\bv_{\tvb\uv}$}
    \UIC{$\aneg\aneg\FmA, \aneg\FmA, \FmA$}
    \DP$, and vice-versa.
    \item
    \begin{math}
    \AXC{$\CtxLabAZero \mid \CtxLabAHalf, \FmA \mid \CtxLabAOne$}
\AXC{$\CtxLabAZero \mid \CtxLabAHalf \mid \CtxLabAOne, \FmB$}
\RL{\fns $\aand^{\uv\tv}_\uvg$}
\BIC{$\CtxLabAZero \mid \CtxLabAHalf, \FmA \aand \FmB \mid \CtxLabAOne$}
\DP
    \end{math}
    corresponds to
    $\AXC{$\neg(\FmA \land \FmB), \FmB$}
\RL{\fns $\aand^{\uv\tv}_{\uvg\bv}$}
\UIC{$\FmA \aand \FmB, \neg\FmA, \FmA$}
\DP$

and
$\AXC{$\FmA \land \FmB, \FmB$}
\RL{\fns $\aand^{\uv\tv}_{\uvg\tv}$}
\UIC{$\neg\FmA, \FmA$}        
\DP$, and vice-versa.
    \item
    $\AXC{$\CtxLabAZero, \FmA \mid \CtxLabAHalf \mid \CtxLabAOne$}
\AXC{$\CtxLabAZero \mid \CtxLabAHalf, \FmB \mid \CtxLabAOne$}
\RL{\fns $\ararr^{\bv\uv}_\tvb$}
\BIC{$\CtxLabAZero \mid \CtxLabAHalf \mid \CtxLabAOne, \FmA \ararr \FmB$}
\DP$ corresponds to
$\AXC{$\aneg(\FmA \ararr \FmB), \neg\FmA$}
\RL{\fns $\ararr^{\bv\uv}_{\tvb\bv}$}
\UIC{$\FmA \to \FmB, \neg\FmB, \FmA, \FmB$}
\DP$ 

and
$\AXC{$\neg\FmA$}
\RL{\fns $\ararr^{\bv\uv}_{\tvb\uv}$}
\UIC{$\FmA \to \FmB, \aneg(\FmA \ararr \FmB), \neg\FmB, \FmA, \FmB$}
\DP$,
and vice-versa.
\end{itemize}

\end{example}

\vspace{0.5cm}

The above result just states that, modulo monadicity,
both generating procedures work by similar mechanisms,
characterizing the interpretations of the three-valued
matrix at hand.
An interesting problem is whether there
is an effective translation between
proofs of each generated calculi.
The next results show that, from 
\SetSet{} proofs to 3-labelled proofs,
we may explore the fact that
\SetSet{} proofs internalize
proof-by-cases~\cite[Chapter 20]{shoesmithsmiley1978},
in order to provide an effective translation.
We first show how to translate \SetSet{}
rules of inference into 3-labelled rules, and then explain how to use
the latter in transforming proofs between these formalisms
preserving $\Theta$-analyticity.
From now on, we will write \SetSet{} rules
schematically (using variables for formulas
instead of propositional variables),
in order to ease the comparison with 
3-labelled rules.

\begin{lemma}
\label{lem:Derivability-MultipleToLabelledCalculi}
Let $\LName$ be the \SetSet{} logic
determined by the three-valued matrix 
$\MatA \SymbDef \langle \AlgA, \{\tv\} \rangle$ and
$\CalcA$ be a \SetSet{} axiomatization for $\MatA$. For every \emph{primitive} rule  $\mathsf{r} \SymbDef \MCRule{\FmA_1,\ldots,\FmA_m}{\FmB_1,\ldots,\FmB_n} \in \CalcA$, the 3-sequent $\FmA_1, \ldots, \FmA_m \mid \FmA_1, \ldots, \FmA_m \mid \FmB_1, \ldots, \FmB_n$
has a cut-free derivation in $\ThreeLabStreamName{\LName}$.
\end{lemma}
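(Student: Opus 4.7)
The plan is to bypass an explicit hand-construction of the 3-labelled derivation and instead reduce the statement to a soundness/completeness argument, coupled with the cut-elimination available for $\ThreeLabStreamName{\LName}$.

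First I would observe that, since $\CalcA$ is by hypothesis a SetSet axiomatization of $\MatA$, every primitive rule of $\CalcA$ is sound with respect to $\MatA$; in particular $\FmA_1, \ldots, \FmA_m \SetSetCR_{\MatA} \FmB_1, \ldots, \FmB_n$. Unfolding the definition of $\SetSetCR_\MatA$ with $D = \{\tv\}$, this says that no $\MatA$-valuation $\ValuationA$ can simultaneously send every $\FmA_i$ to $\tv$ and every $\FmB_j$ to a value in $\{\bv, \uv\}$.

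Next I would translate this observation into the language of 3-sequents, using the satisfaction clause recalled before Theorem~\ref{the:axiomatization_three_labelled}. A valuation falsifies the 3-sequent
\[
\FmA_1, \ldots, \FmA_m \mid \FmA_1, \ldots, \FmA_m \mid \FmB_1, \ldots, \FmB_n
\]
precisely when it assigns $\tv$ to every $\FmA_i$ (so no $\FmA_i$ lands in the $\bv$- or $\uv$-block) and a non-designated value to every $\FmB_j$. The previous step rules out such a valuation, so the 3-sequent is valid in $\MatA$. By Theorem~\ref{the:axiomatization_three_labelled}, the generated calculus $\ThreeLabName{\LName}$ is a cut-free, analytic axiomatization of $\MatA$, so the sequent admits a cut-free derivation in $\ThreeLabName{\LName}$.

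Finally I would transfer the derivation to the streamlined system $\ThreeLabStreamName{\LName}$: by the proposition immediately following Definition~\ref{def:3LabelledSreamliningProcedure}, the streamlining steps (P1)--(P3) preserve derivational equivalence, adequacy for $\MatA$, and cut-elimination. Consequently, the cut-free derivation in $\ThreeLabName{\LName}$ can be replayed cut-free in $\ThreeLabStreamName{\LName}$, yielding the claim. The only delicate point I would expect is verifying that each streamlining step preserves cut-freeness uniformly when applied to a concrete derivation (not only derivability of end-sequents); but this is exactly the content of the cited proposition, so no genuine obstacle arises. Alternatively, one could appeal to the pointwise correspondence of Theorem~\ref{thm:FullMultipleIFFFullLabelledCalcluli} to exhibit the derivation more directly, but the indirect semantic route is cleaner and already sufficient.
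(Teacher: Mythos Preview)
Your proposal is correct and follows essentially the same approach as the paper: the paper's proof simply notes that $\mathsf{r}$ is a consecution of $\LName$ and that $\ThreeLabStreamName{\LName}$ is adequate for $\MatA$ with cut admissible, which is exactly the soundness/completeness-plus-cut-elimination route you take. Your version is more explicit (passing through $\ThreeLabName{\LName}$ before transferring to the streamlined system), but the underlying argument is identical; note in particular that you do not need to ``replay'' a concrete derivation across the streamlining---derivational equivalence plus cut-elimination for $\ThreeLabStreamName{\LName}$ already gives a cut-free proof directly.
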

\begin{proof}
It follows directly from the facts that
$\RuleA$ is a consecution of $\LName$
and that
$\ThreeLabStreamName{\LName}$ is adequate with respect
$\MatA$ and cut is admissible in it.
%
%
%The calculus $\CalcA$ is sound w.r.t.~the finite matrix presentation of $\LName$, and the rules of $\CalcA$ preserve designatedness, so: 
%
%\begin{center}
%\begin{tabular}{cl}
%    & $\mathsf{r} \in \CalcA$ is sound \\
%\rule[-3mm]{0mm}{0.8cm} iff & $(\SAND_{i=1}^n \FmA_i^L=\tv) \Srarr (\SOR_{j=1}^m \FmB_j^L=\tv)$ \\
%\rule[-3mm]{0mm}{0.8cm} iff & $\Sneg (\SAND_{i=1}^n \FmA_i^L=\tv) \Sor (\SOR_{j=1}^m \FmB_j^L=\tv)$ \\
%\rule[-3mm]{0mm}{0.8cm} iff & $(\SOR_{i=1}^n \Sneg (\FmA_i^L=\tv)) \Sor (\SOR_{j=1}^m \FmB_j^L=\tv)$ \\
%\rule[-3mm]{0mm}{0.8cm} iff & $(\SOR_{i=1}^n (\FmA_i^L=\bv \Sor \FmA_i^L=\uv)) \Sor (\SOR_{j=1}^m \FmB_j^L=\tv)$ \\
%\end{tabular}
%\end{center}
%
%On the other hand, the calculus $s\ell.\textrm{L}$ is complete w.r.t.~the finite matrix presentation of L and it has cut elimination, so the statement 
%
%$$(\SOR_{i=1}^n (\FmA_i^L=\bv \Sor \FmA_i^L=\uv)) \Sor (\SOR_{j=1}^m \FmB_j^L=\tv)$$
%
%is valid in L iff the associated sequent 
%
%$$S({\mathsf{r}^\conn}) := \SOR_{i=1}^n \FmA_i \mid \SOR_{i=1}^n \FmA_i \mid \SOR_{j=1}^m \FmB_j$$
%
%has an $s\ell.\textrm{L}$ cut-free derivation.
\end{proof}

\begin{lemma}
\label{thm:derivable-rule-mc}
Let $\LName$ be the logic
determined by the three-valued matrix 
$\MatA \SymbDef \langle \AlgA, \{\tv\} \rangle$ and
$\CalcA$ be a \SetSet{} axiomatization for $\MatA$. For every \emph{primitive} rule  $\mathsf{r} \in \CalcA$, the image of the translation $\lf \cdot \rf$
given by
\begin{center}
\begin{tabular}{c}
$\left\lf
\AXC{$\FmA_1,\ldots,\FmA_m$}
\RL{$\mathsf{r}$}
\UIC{$\FmB_1, \ldots, \FmB_n$}
\DP
\right\rf$
\ \  $:=$ \\
 \\
 %\CtxLabAZero \mid \CtxLabAHalf \mid \CtxLabAOne
\AXC{$F, \CtxLabAZero, \FmB_1 \mid F, \CtxLabAZero, \FmB_1 \mid \CtxLabAOne$}
\AXC{$\cdots$}
\AXC{$F, \CtxLabAZero, \FmB_n \mid F, \CtxLabAZero, \FmB_n \mid \CtxLabAOne$}
\RL{$\lf\mathsf{r}\rf$}
\TIC{$F, \CtxLabAZero \mid F, \CtxLabAZero \mid \CtxLabAOne$}
\DP
 \\
\end{tabular}
\end{center}
where $F \SymbDef \{\FmA_1,\ldots,\FmA_m\}$
is a derivable rule  of $\ThreeLabStreamName{\LName}$.% extended with open assumptions, i.e.~zeroary rules $\neq \textrm{Id}_\FmA$.
\end{lemma}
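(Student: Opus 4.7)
The plan is to construct a derivation of the conclusion of $\lf\mathsf{r}\rf$ from its premises in $\ThreeLabStreamName{\LName}$, using (a) a cut-free derivation of $\FmA_1,\ldots,\FmA_m \mid \FmA_1,\ldots,\FmA_m \mid \FmB_1,\ldots,\FmB_n$ furnished by Lemma~\ref{lem:Derivability-MultipleToLabelledCalculi}, together with (b) weakenings and cuts on the formulas $\FmB_j$ against the hypotheses $P_j \SymbDef F,\CtxLabAZero,\FmB_j \mid F,\CtxLabAZero,\FmB_j \mid \CtxLabAOne$, where $F \SymbDef \{\FmA_1,\ldots,\FmA_m\}$.

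First, I would enlarge the derivation provided by (a) via the weakening rules $W_\bv, W_\uv, W_\tv$ into a derivation of the ``backbone'' 3-sequent $F,\CtxLabAZero \mid F,\CtxLabAZero \mid \FmB_1,\ldots,\FmB_n,\CtxLabAOne$. Then I would proceed by induction on $j\in\{0,1,\ldots,n\}$ to derive $S_j \SymbDef F,\CtxLabAZero \mid F,\CtxLabAZero \mid \FmB_{j+1},\ldots,\FmB_n,\CtxLabAOne$ from the hypotheses $P_1,\ldots,P_j$. The base case $j=0$ is the backbone sequent itself. For the inductive step, assuming $S_{j-1}$ has been derived, I would weaken $P_j$ with $\FmB_{j+1},\ldots,\FmB_n$ in its $\tv$-component and weaken $S_{j-1}$ with $\FmB_j$ in its $\uv$-component, then apply $\mathrm{Cut}_{\bv,\tv}$ on $\FmB_j$ between these two sequents to obtain $F,\CtxLabAZero \mid F,\CtxLabAZero,\FmB_j \mid \FmB_{j+1},\ldots,\FmB_n,\CtxLabAOne$; finally I would apply $\mathrm{Cut}_{\uv,\tv}$ on $\FmB_j$ against $S_{j-1}$ itself (viewed as containing $\FmB_j$ in its $\tv$-component) to yield $S_j$.

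Taking $j = n$ then yields the conclusion $F,\CtxLabAZero \mid F,\CtxLabAZero \mid \CtxLabAOne$ of $\lf\mathsf{r}\rf$, as required. The degenerate cases pose no extra difficulty: if $n=0$ (that is, $\mathsf{r}$ has empty succedent, which is meaningful in the \SetSet{} setting), the conclusion is already at hand after the initial weakening; if $m=0$, we simply have $F=\EmptySet$ and the same scheme goes through unchanged.

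The main obstacle is purely bookkeeping: at each round one must align the three contexts of the two premises of each cut application, which is delicate because the hypotheses $P_j$ already carry $\FmB_j$ in both their $\bv$- and $\uv$-components while the intermediate sequents $S_{j-1}$ carry $\FmB_j$ only in the $\tv$-component. This asymmetry between $P_j$ and $S_{j-1}$ is precisely what forces the use of two cuts per $\FmB_j$ (one of type $\mathrm{Cut}_{\bv,\tv}$ and one of type $\mathrm{Cut}_{\uv,\tv}$), together with the weakening steps required to realign the cut contexts at each round.
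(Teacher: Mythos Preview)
Your proposal is correct and follows essentially the same approach as the paper: invoke Lemma~\ref{lem:Derivability-MultipleToLabelledCalculi} to obtain the 3-sequent $F \mid F \mid \FmB_1,\ldots,\FmB_n$, then eliminate the $\FmB_j$ from the $\tv$-component one by one via cuts against the premises $P_j$. The paper merely says ``the result easily follows by applying cuts'', whereas you spell out the weakening and the two-cut pattern (one $\mathrm{Cut}_{\bv,\tv}$ and one $\mathrm{Cut}_{\uv,\tv}$ per $\FmB_j$) needed to align contexts; this extra bookkeeping is exactly what the paper suppresses, and your handling of the degenerate cases $n=0$ and $m=0$ is also fine.
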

\begin{proof}
By Lemma~\ref{lem:Derivability-MultipleToLabelledCalculi},
$\FmA_1, \ldots, \FmA_m \mid \FmA_1, \ldots, \FmA_m \mid \FmB_1, \ldots, \FmB_n$
is derivable in $\ThreeLabStreamName{\LName}$.
The result easily follows by applying cuts using this 3-sequent and the 3-sequents in the premises of $\lf\mathsf{r}\rf$.
\end{proof}
%
%\begin{remark}
%Notice that in the rule $\mathsf{r}^\conn$, the connective $\conn$ occurs exactly once, while $\aneg$ occurs a finite number of times (including zero). We observe that in $\pi$: if $\aneg$ occurs $n$ times in the premises of $\mathsf{r}^\conn$, then it occurs $n$ times in the first (resp.~$n$ more times in the second) component of $S({\mathsf{r}^\conn})$ and it is introduced by $n$ applications of $\aneg_\bvr$ (resp.~$\aneg_\uvg$) in $\pi$; if $\aneg$ occurs $m$ times in the conclusions of $\mathsf{r}^\conn$, then it occurs $m$ times in the third component of $S({\mathsf{r}^\conn})$ and it is introduced by $m$ applications of $\aneg_\tvb$ in $\pi$. 
%\end{remark}

\begin{theorem}
There is an effective procedure to translate
$\CalcA$-proofs into
proofs in $\ThreeLabStreamName{\LName}$.
Moreover, the translation preserves $\Theta$-analyticity.
\end{theorem}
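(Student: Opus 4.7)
The plan is to define, by recursion on the tree structure of $\TreeA$, an effective translation that maps any $\CalcA$-proof of a statement $(\FmSetA,\FmSetB)$ into a $\ThreeLabStreamName{\LName}$-derivation of the 3-sequent $\FmSetA \mid \FmSetA \mid \FmSetB$. By Definition~\ref{def:crofthreelabelled}, and because we are in the case $D=\{\tv\}$, this 3-sequent exactly witnesses $\FmSetA \SetSetCR_\MatA \FmSetB$. The recursive invariant I will maintain is that, for every node $\NodeA$ of $\TreeA$ with label $\FmSetA' \supseteq \FmSetA$, the translation produces a $\ThreeLabStreamName{\LName}$-derivation of $\FmSetA' \mid \FmSetA' \mid \FmSetB$; applying this at the root yields the target.

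The base case handles leaves of $\TreeA$ that are closed by $\FmSetB$: picking any $\FmA \in \FmSetA' \cap \FmSetB$, the rule Id yields $\FmA \mid \FmA \mid \FmA$, and weakening then yields $\FmSetA' \mid \FmSetA' \mid \FmSetB$. For the inductive step, suppose $\NodeA$ is expanded by a rule instance $\mathsf{r}^\sigma = \MCRule{\sigma(\FmSetD)}{\sigma(\FmB_1),\ldots,\sigma(\FmB_k)}$ of some primitive rule $\mathsf{r} \in \CalcA$. If $k \geq 1$, then the children of $\NodeA$ have labels $\FmSetA' \cup \{\sigma(\FmB_i)\}$, and the inductive hypothesis yields, for each $1 \leq i \leq k$, a derivation of $\FmSetA', \sigma(\FmB_i) \mid \FmSetA', \sigma(\FmB_i) \mid \FmSetB$. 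Applying the derivable rule $\lf \mathsf{r}^\sigma \rf$ of Lemma~\ref{thm:derivable-rule-mc}, instantiated with $F = \sigma(\FmSetD)$, $\CtxLabAZero = \FmSetA' \setminus F$ and $\CtxLabAOne = \FmSetB$, combines these derivations into one of $\FmSetA' \mid \FmSetA' \mid \FmSetB$. If $k = 0$, the single child of $\NodeA$ is labelled $\Star$ and the rule $\lf \mathsf{r}^\sigma \rf$ has empty set of premises; its application with the same instantiation of $F,\CtxLabAZero,\CtxLabAOne$ directly derives the required 3-sequent. The procedure is thus clearly effective: it performs one local transformation per node of $\TreeA$.

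The main obstacle lies in checking that $\AnalyticSetA$-analyticity is preserved. Every rule application introduced during the translation is of the shape $\lf \mathsf{r}^\sigma \rf$, whose realisation (per the proof of Lemma~\ref{thm:derivable-rule-mc}) proceeds by cutting against the cut-free $\ThreeLabStreamName{\LName}$-derivation furnished by Lemma~\ref{lem:Derivability-MultipleToLabelledCalculi}. By the analyticity clause of Theorem~\ref{the:axiomatization_three_labelled}, that derivation uses only subformulas of $\sigma(\FmA_1),\ldots,\sigma(\FmA_m),\sigma(\FmB_1),\ldots,\sigma(\FmB_n)$, i.e.\ subformulas of formulas already occurring in the $\CalcA$-proof. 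Since the input is $\AnalyticSetA$-analytic, all such formulas lie in $\SubfTheta{\AnalyticSetA}{\FmSetA \cup \FmSetB}$; it remains to observe that this set is closed under subformulas, which holds because $\AnalyticSetA$ may be assumed (without loss of generality, and as in all the concrete $\AnalyticSetA$ considered in Section~\ref{sec:MultipleConclusionCalculi}) closed under subformulas itself, so that any subformula of a $\sigma(\FmA)$ with $\FmA \in \AnalyticSetA$ is either a subformula of some $\sigma(\PropA_i) \in \Subf{\FmSetA \cup \FmSetB}$ or is of the form $\sigma(\FmA')$ with $\FmA' \in \AnalyticSetA$. This closure property, which is the essential ingredient of the analyticity argument, guarantees that the translated proof remains entirely within $\SubfTheta{\AnalyticSetA}{\FmSetA \cup \FmSetB}$.
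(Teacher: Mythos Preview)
Your proof is correct and follows essentially the same inductive strategy as the paper's: translate each node of the $\CalcA$-proof into an application of the derived rule $\lf \mathsf{r}\rf$ from Lemma~\ref{thm:derivable-rule-mc}, with leaves handled by Identity plus Weakening. Your treatment is in fact slightly more careful than the paper's in two respects: you explicitly cover the empty-succedent ($k=0$) case, and in the analyticity argument you identify the need for $\SubfTheta{\AnalyticSetA}{\FmSetA\cup\FmSetB}$ to be closed under subformulas (via closure of $\AnalyticSetA$), whereas the paper simply asserts that only the rule-instance formulas themselves appear in the derivation of $\lf\RuleA\rf$, glossing over the subformulas introduced by the analytic 3-labelled derivation of Lemma~\ref{lem:Derivability-MultipleToLabelledCalculi}.
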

\begin{proof}
We will prove by induction on the
structure of $\CalcA$-derivations $\TreeA$ that
$S(\TreeA) \SymbDef $ ``for all $\FmSetA,\FmSetB \subseteq \LangSetA$,
if $\TreeA$ is an $\CalcA$-proof of 
$(\FmSetA,\FmSetB)$, then
$\TreeA$ can be effectively translated into a proof 
$\lf \TreeA \rf$ of the same statement in $\ThreeLabStreamName{\LName}$
preserving $\Theta$-analyticity''.
For the base case, suppose that
$\TreeA$ has a single node labelled with $\FmSetD$. Since it is a
proof of $(\FmSetA,\FmSetB)$, we must have
$\FmSetD \subseteq \FmSetA$ and
$I \SymbDef {\FmSetD} \cap \FmSetB \neq \EmptySet$.
Consider an enumeration $\FmA_1,\ldots,\FmA_n,\ldots$
of the formulas in $I$.
Then take $\lf \TreeA \rf$ to be the tree
\begin{figure}[H]
    \centering
    \small
    \AxiomC{}
    \RL{\fns Id$_{\FmA_1}$}
    \UIC{$\FmA_1 \mid \FmA_1 \mid \FmA_1$}
    \RL{$W^\ast$}
    \UnaryInfC{$I \mid I \mid I$}
    \RL{$W^\ast$}
    \UnaryInfC{$\FmSetD \mid \FmSetD \mid \FmSetB$}
    \RL{$W^\ast$}
    \UnaryInfC{$\FmSetA \mid \FmSetA \mid \FmSetB$}
    \DP
\end{figure}
\noindent which is clearly a proof of 
$(\FmSetA,\FmSetB)$ in $\ThreeLabStreamName{\LName}$
that is $\Theta$-analytic if $\TreeA$ is
$\Theta$-analytic.
Suppose now that $\TreeA$ has its root labelled
with $\FmSetD$ and has
subtrees $\TreeA_1,\ldots,\TreeA_n$, which resulted
from an application of the rule instance $\RuleA^\sigma
\SymbDef \MCRule{\FmB_1,\ldots,\FmB_m}{\FmC_1,\ldots,\FmC_n}$.
Note that
$\TreeA_i$ is an $\CalcA$-proof of
$(\FmSetD \cup \{\FmC_i\},\FmSetB)$.
Assume that (IH): $S(\TreeA_i)$ holds for every
$1 \leq i \leq n$. Therefore, each $\TreeA_i$
has a translation to a proof $\lf \TreeA_i \rf$ 
in $\ThreeLabStreamName{\LName}$ of the statement 
$(\FmSetD \cup \{\FmC_i\},\FmSetB)$.
Since $\Gamma \subseteq \FmSetA$
and $\{\FmB_1,\ldots,\FmB_m\} \subseteq \Gamma$,
the following tree is the translation we are looking for
(recall the definition of
$\lf \RuleA \rf$ in Lemma~\ref{thm:derivable-rule-mc}):

\begin{figure}[H]
    \centering
    \small
    \AxiomC{$\lf \TreeA_1 \rf$}
    \noLine
    \UnaryInfC{$\FmSetD,\FmC_1 \mid \FmSetD,\FmC_1 \mid \FmSetB$}
    \AxiomC{$\cdots$}
    \AxiomC{$\lf \TreeA_n \rf$}
    \noLine
    \UnaryInfC{$\FmSetD, \FmC_n \mid \FmSetD, \FmC_n \mid \FmSetB$}
    \RL{$\lf \RuleA \rf$}
    \TIC{$\FmSetD \mid \FmSetD \mid \FmSetB$}
    \RL{$W^\ast$}
    \UnaryInfC{$\FmSetA \mid \FmSetA \mid \FmSetB$}
    \DP
\end{figure}

\noindent By Lemma~\ref{lem:Derivability-MultipleToLabelledCalculi} and the proof of Lemma~\ref{thm:derivable-rule-mc}, we observe
that only formulas in $\{\FmB_1,\ldots,\FmB_m\} \cup \{\FmC_1,\ldots,\FmC_n\}$
are used in the derivation of the applied instance of $\lf \RuleA \rf$,
and since all these formulas appeared in labels of the nodes of $\TreeA$, they are
$\Theta$-subformulas of $(\FmSetA,\FmSetB)$.
By (IH), then, we conclude that $\lf \RuleA \rf$ is $\Theta$-analytic.
\end{proof}

From 3-labelled proofs to \SetSet{} proofs,
we leave the task for further research. A starting point
would be to impose the monadicity requirement
for the axiomatized matrix and search for
a finite collection of \SetSet{} rules that capture
the application of a 3-labelled rule, using
the fact that each of such rules represent
a collection of entries of the interpretation
of a connective
of the logical matrix. 

\section{Final considerations}
\label{sec:Conclusion}

When it comes to the
automatic generation of
proof systems for propositional
three-valued logics, 
the formalism of 3-labelled calculi 
is powerful enough to deliver finite and analytic
systems for any of such logics.
The mechanism behind this general applicability is fairly simple: each truth value has its own
place in the manipulated sequents, in such a
way that the meta-conjunctions and meta-disjunctions
that characterize each truth table
have a direct correspondence in terms of
3-labelled logical rules (see~\cite{BaaFerZac98}
for more details on this matter).
Enriching the structure of the sequents, however, may bring some disadvantages, namely
a greater distancing between the rules of the proof system and the logic being 
captured, as well as the extra effort in dealing
with the new metalinguistic resources.
One of the research lines in which 
these aspects may become practical problems is
in the combination of logics: putting two
sequent systems together may cause unwanted
interactions in the combined system~\cite{joaocarlossergiomerging2017}.

An alternative to increasing the metalanguage resides in investing on Hilbert-style
systems, which keep the metalanguage at a minimum,
the rules of inference being just a collection
of consecutions of the underlying logic.
When it comes to system generation, proof search and decidability, 
these systems have been put aside due to the
lack of modularity and analyticity results. After
the works of C. Caleiro and S. Marcelino~\cite{marcelino2019,marcelinowollic19}, we learned that the 
simple move to \SetSet{} Hilbert-style systems~\cite{shoesmithsmiley1978}
allows us to get modular and analytic 
systems for a wide class of many-valued logics.
The price for abandoning the metalanguage
present in 3-sequents is the sufficient
expressiveness requirement (monadicity),
which is demanded for the automatic
proof system generation in the
\SetSet{} setting. 
Summing up, as we have seen in Section~\ref{ComparisonBetween3labelledAndMultipleConclusionCalculi},
the choice of proof formalism for a
given application depends
essentially on a trade-off between
the acceptable usage of extra metalanguage
(or the proximity of the rules  to the
logic of interest)
and the complexity of the formulas
present in the rules of inference
(cf. Example~\ref{ex:translation-rules-procedures}).

It is worth emphasizing that
every three-valued logic (in the sense of being
determined by a single deterministic three-valued matrix)
has a finite \SetSet{} axiomatization~\cite{shoesmithsmiley1978}, even though
there is no general procedure for producing it
in the case non-monadic matrices (provided we want to use the same language --- if one is allowed to expand the language, the problem is solved~\cite[Section 4]{marcelino2019}). The best we can
do, for now, is to proceed in an \emph{ad hoc}
manner, as we did for some fragments of
well-known three-valued logics in Subsection~\ref{sec:nonmonadic}.
A promising line of investigation, therefore, consists in
pursuing a general procedure covering non-monadic cases and delivering
analytic \SetSet{} calculi over the original language.

Finally, despite the fact that the three-valued
logics studied in this chapter are
all determined by single deterministic logical
matrices and the generation procedures were presented for these cases only, such procedures have all been generalized
to (partial) non-deterministic matrices~\cite{marcelinowollic19,avronbeata2005}.
These are structures that allow for
(possibly empty) sets of truth values as
outputs of the interpretations of the
connectives. With partial matrices, for example,
one may capture by single monadic matrices
three-valued logics determined by 
a family of deterministic three-valued matrices~\cite[Example 3]{marcelino2019}.
In this way, taking the study of three-valued logics
to partial non-deterministic semantics 
is a natural step towards the discovery
of new logics and applications.

%\begin{itemize}
    %\item the 3 valued case, monadic, non monadic, 3 labelled vs mc 
    %\item beyond determinism (and also partiality to cover also logics not given by single matrix)
    
    %\item other calculi, display?
%\end{itemize}

\bibliographystyle{plain}
\bibliography{main}

%\newpage
%\begin{appendices}
%\input{SomeNonMonadicCases}
%\input{QuasiCanonicalSequentCalculi}
%\input{FromSML3ToSL3}
%\input{FromSMCLToSLL}
%%\input{Translations}
%\end{appendices}

\end{document}